\documentclass[a4paper, 11pt, leqno]{amsart}

\usepackage[T1]{fontenc}
\usepackage[utf8]{inputenc}
\usepackage{lmodern}
\usepackage{microtype}

\usepackage{geometry}

\geometry{left=2.8cm, right=2.8cm, bottom=3cm, top=4cm}

\usepackage{ifdraft}
\ifdraft{\geometry{layout=a4paper, layouthoffset=-17mm, layoutvoffset=-25mm, paper=b5paper}
}{}

\newcommand{\cprime}{\ensuremath{'}}
\usepackage{amssymb}
\usepackage{amsmath}
\usepackage{mathrsfs}
\usepackage{esint}
\usepackage{amsmath}
\usepackage[final]{hyperref}
\usepackage{enumerate}

\usepackage[mathcal]{euscript}
\usepackage{setspace}
\usepackage[all]{xy}

\usepackage{babel}
\usepackage{braket}
\usepackage{bbold}
\usepackage{dsfont}

\usepackage{enumitem}

\newcommand{\E}{\mathcal{E}}

\newcommand{\guillemet}[1]{``#1''}
\newcommand{\one}{\mathbb{1}}
\renewcommand{\set}[1]{\{#1\}}
\DeclareMathOperator{\diam}{diam}
\DeclareMathOperator{\dist}{dist}
\newcommand{\Hau}{\mathcal{H}} 
\newcommand{\Leb}{\mathcal{L}} 
\renewcommand{\d}{\,\mathrm{d}} 
\newcommand{\D}{\mathrm{D}}
\newcommand{\dotW}{\smash{\dot{\mathrm{W}}}\vphantom{W}}

\newcommand{\R}{\mathbb{R}}
\DeclareMathOperator{\tr}{tr}
\DeclareMathOperator{\Tr}{tr}
\newcommand{\BV}{\mathrm{BV}}

\newcommand{\B}{\mathbb B}

\newtheorem{theorem}{Theorem}[section]

\newtheorem{corollary}[theorem]{Corollary}

\newtheorem{lemma}[theorem]{Lemma}
\newtheorem{proposition}[theorem]{Proposition}
\makeatletter
\newtheoremstyle{italicRemark}
{}
{}
{\normalfont}
{}
{\itshape}
{.}
{ }
{}
\makeatother

\theoremstyle{italicRemark}

\newtheorem{remark}[theorem]{Remark}

\newcommand{\pointN}{b}


\def\XXint#1#2#3{\setbox0=\hbox{$#1{#2#3}{\int}$}\vcenter{\hbox{$#2#3$}}\kern-.5\wd0}

\newenvironment{Proof}[3][\unskip]{\begin{proof}[Proof of #2 \ref{#3}{#1}]
		}{\end{proof}}

\makeatletter %

\@addtoreset{equation}{section}
\makeatother

\title[Extensions at the endpoint $p=1$ of traces for Sobolev mappings into manifolds]{Extensions of traces for Sobolev mappings into \\manifolds at the endpoint $p=1$}

\author{Jean Van Schaftingen}
\address[J. Van Schaftingen]{
Universit\'e catholique de Louvain, Institut de Recherche en Math\'ematique et Physique, Chemin du Cyclotron 2 bte L7.01.01, 1348 Louvain-la-Neuve, Belgium}

\email{Jean.VanSchaftingen@UCLouvain.be}

\author{Benoît Van Vaerenbergh}
\address[B. Van Vaerenbergh]{
Universit\'e catholique de Louvain, Institut de Recherche en Math\'ematique et Physique, Chemin du Cyclotron 2 bte L7.01.01, 1348 Louvain-la-Neuve, Belgium}

\email{Benoit.VanVaerenbergh@UCLouvain.be}

\subjclass[2020]{58D15 (46E35, 46T10, 58J32)}

\keywords{Sobolev spaces; integrable mappings into a metric space; dyadic decomposition}

\date{February 24, 2025}

\thanks{J. Van Schaftingen was supported by the Projet de Recherche T.0229.21 ``Singular Harmonic Maps and Asymptotics of Ginzburg--Landau Relaxations'' of the Fonds de la Recherche Scientifique--FNRS.
B. Van Vaerenbergh was supported by a FRIA fellowship}
\setcounter{tocdepth}{1}
\hypersetup{bookmarksdepth=2}

\begin{document}

\begin{abstract}
We give direct proofs and constructions of the trace and extension theorems for Sobolev mappings in $\mathrm W^{1, 1} (M, N)$, where $M$ is Riemannian manifold with compact boundary $\partial M$ and $N$ is a complete Riemannian manifold.
The analysis is also applicable to halfspaces and strips.
The extension is based on a tiling the domain of the considered applications by suitably chosen dyadic cubes  to construct the desired extension.
	Along the way, we obtain asymptotic characterizations of the $\mathrm L^1$-energy of mappings.
\end{abstract}

\maketitle

\tableofcontents

\section{Introduction and main results}\label{sec:introduction-and-main-results}
We consider two Riemannian manifolds $M$ and $N$, such that
$M$ has a compact boundary $\partial M$. We assume that $N$ is isometrically embedded in $\R^\nu$, $\nu \in \mathbb N$.
The trace operator \[\tr_{\partial M}: \dotW^{1,p}(M,N) \to \mathrm{L}^1_{\mathrm{loc}}(\partial M,N)\] extends the restriction of continuous maps $\mathrm{C}(M,N)\to \mathrm{C}(\partial M,N)$ to mappings belonging to the homogeneous Sobolev space \[
	\dotW^{1,p}(M,N) = \{ U: M \to N \text{ is weakly differentiable and } |\D U| \in \mathrm{L}^p(M)\}.
\]
Here and after, $|\D U|$ is the pointwise Frobenius or Hilbert-Schmidt norm of the weak differential $\D U$ of $U$.

For \(p > 1\), the range of the trace of \(\dotW^{1,p}(M, \R^\nu)\) is known to be the fractional space
\( \dotW^{1-1/p,p}(\partial M, \R^\nu)\) since \textsc{Gagliardo}’s seminal work \cite{Gagliardo1957caratterizzazioni};
when \(N\) is a compact manifold, the cases where the range is the fractional space \( \dotW^{1-1/p,p}(\partial M, N)\) has been characterised in a series of works over the last decades \cite[Section 6]{hardt1987mappings}\cite{bethuel1995extension}\cite{Bethuel_2014}\cite{mironescu2021trace}\cite{vanschaftingen2024extensiontracessobolevmappings}.
For our purpose it is interesting to note that the surjectivity always hold for  \(1 < p < 2 \).

Going back to \(p = 1\),
\textsc{Gagliardo} has also proved \cite{Gagliardo1957caratterizzazioni} (see also \cite{Mironescu2025note} and \cite[Teorema 10]{anzellotti1978funzioni}) that the range of the trace of \( \dotW^{1, 1}(M, \R^\nu)\) is \(\mathrm L^1 (\partial M, \R^\nu)\).
However, even in the case $N = \R^\nu$, $\nu \in \mathbb N \setminus \{0\}$, the existence of a continuous and linear extension, \emph{i.e.} the existence of a continuous and linear right inverse to $\tr_{\R^d \times \{0\}}: \mathrm{W}^{1,1}(\R^d\times (0,1)) \to \mathrm{L}^1(\R^d)$, is prevented by a result of \textsc{Peetre} \cite{peetre1979counterexample}; see also \cite[Section 5]{Pelczynski2002Sobolev}.

\textsc{Hardt} and \textsc{Lin}’s proof \cite{hardt1987mappings} works straightforwardly for \(p = 1\) \cite[Proof of Theorem 7]{bethuel1995extension} and \cite[Footnote 2]{mironescu2021trace} and shows the surjectivity of the trace when the target \(N\) is a compact Riemannian manifold.

The starting point of this work is to provide a direct proof of the surjectivity of the trace trough a fairly explicit construction of the extension.
As a byproduct, we get a slight weakening of the hypotheses from which we remove the compactness assumption on \(N\).

\begin{theorem}\label{thm:W11extmanifold}
	Let $M$ be a   compact Riemannian manifold  with compact boundary.
	Every \(U\in \dotW^{1,1}(M,N)\) has a trace \(u = \tr_{\partial M}U\) such that
	\[
	\begin{split}
	   \iint_{\partial M \times \partial M} \dist_N (u (x), u(y))\d x \d y
	   &\le C_M \iint_{\partial M \times M} \dist_N (u (x), U(y))\d x \d y\\
	   &\le C_M' \int_{M} \vert \D U\vert,
	   \end{split}
	\]
where $C_M, C_M'>0$ only depend on $M$.
Conversely, if $u:\partial M \to N$,
then there exists a map  $U \in \dotW^{1,1}(M,N)$ satisfying $\tr_{\partial M}U = u$ and
	\begin{equation}\label{eq:traceineqmanfiodl}
	\int_{M} |\D U|
	\leq C_M''
	\iint_{\partial M \times \partial M}	\dist_N(u(x),u (y)) \d x \d y.
	\end{equation}
where $C_M''>0$ only depends on $M$, provided the right-hand side is finite.
\end{theorem}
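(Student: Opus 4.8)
\emph{The two inequalities for the trace} are of an elementary nature and I would prove them first. For the first one, for a.e.\ $x,y\in\partial M$ and every $z\in M$ the triangle inequality gives $\dist_N(u(x),u(y))\le \dist_N(u(x),U(z))+\dist_N(U(z),u(y))$; averaging in $z$ over $M$, integrating in $(x,y)$ over $\partial M\times\partial M$ and using that the two resulting terms coincide by symmetry yields the bound with $C_M=2\,\Hau^{d+1}(M)^{-1}\Hau^{d}(\partial M)$, $d=\dim\partial M$. For the second one I would first recall the classical fact that $U\in\dotW^{1,1}(M,N)$ has a trace, obtained as the $\mathrm L^{1}_{\mathrm{loc}}$--limit of $U$ along the inward normal geodesics, and then estimate $\dist_N(u(x),U(y))$ by the $N$--length of $U$ along a curve $\gamma_{x,y}$ in $M$ joining the foot of $x$ to $y$, i.e.\ by $\int_{\gamma_{x,y}}\lvert\D U\rvert$. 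Choosing a family $\{\gamma_{x,y}\}$ — say, the normal segment issued from $x$ followed by a segment in a fixed smooth fibration of $M$ over $\partial M$ — so that the push-forward of $\d x\,\d y$ under ``integration along $\gamma_{x,y}$'' is dominated by $C_M'\,\mathrm d\Hau^{d+1}$ on $M$, and applying Fubini, gives the bound $C_M'\int_M\lvert\D U\rvert$. Only the compactness of $M$ and $\partial M$ enters here, to make the collar geometry and the fibration uniform.

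\emph{The extension} is the substantial part and the one using the dyadic construction. Assume $[u]:=\iint_{\partial M\times\partial M}\dist_N(u(x),u(y))\,\d x\,\d y>0$ (otherwise $u$ is a.e.\ constant and the constant extension works). Fix a collar $\partial M\times(0,1)\subset M$; I will build $U$ on this collar with $\tr_{\partial M}U=u$ and $U\equiv q$ on $\partial M\times\{1\}$ for a well-chosen point $q\in N$, and set $U\equiv q$ off the collar, so the energy off the collar is $0$. On $\partial M$ I use a dyadic (Christ/David) family of cubes $\{R\}$, a generation-$k$ cube having measure $\asymp 2^{-kd}$ and diameter $\asymp 2^{-k}$; working directly on $\partial M$ avoids reconciling overlapping charts for the $N$--valued map. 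For each $R$ pick a point $x_R\in R$ that is $\mathrm L^1$--good, i.e.\ $\dashint_R\dist_N(u,u(x_R))\le 2\dashint_R\dashint_R\dist_N(u,u)$ (a.e.\ point works, by Chebyshev), and set $u_k:=\sum_{R\text{ gen.\ }k}u(x_R)\one_R\colon\partial M\to N$. These are $N$--valued, one has $\|u_0-u\|_{\mathrm L^1}\lesssim[u]$ and
\[
\|u_k-u\|_{\mathrm L^1(\partial M)}\lesssim \iint_{\dist(x,y)\lesssim 2^{-k}}\dist_N(u(x),u(y))\,\d x\,\d y\xrightarrow[k\to\infty]{}0 ,
\qquad \|u_k-u_{k+1}\|_{\mathrm L^1}\lesssim 2^{kd}[u],
\]
the second estimate being essentially sharp: resolving one more generation can cost a factor $2^{kd}$, which is exactly why $p=1$ is harder than $p>1$.

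The construction then proceeds by an \emph{adaptive} choice of scales. Since $\|u_k-u\|_{\mathrm L^1}\to0$, choose generations $0=k_0<k_1<k_2<\cdots$ with $\|u_{k_j}-u\|_{\mathrm L^1}\le 2^{-j}\|u_0-u\|_{\mathrm L^1}\le 2^{-j}C[u]$, and heights $1\ge t_0>t_1>t_2>\cdots\to0$ to be fixed last. On the slab $\partial M\times[t_{j+1},t_j]$ I define $U$ so that on each generation-$k_{j+1}$ cube $R'$ the map $U(\cdot,t)$ runs, as $t$ goes from $t_{j+1}$ to $t_j$, along a minimizing geodesic in $N$ (which exists since $N$ is complete) from $u_{k_{j+1}}=u(x_{R'})$ to the constant value $u_{k_j}$ on $R'$, after which one corrects $U$ near the cube walls by a standard transition (skeleton) construction so that it becomes continuous and $\dotW^{1,1}$, interpolating horizontally inside $N$ over thin layers. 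Consecutive slabs match on $\partial M\times\{t_j\}$, where both prescriptions give $u_{k_j}$; and $U(\cdot,t_0)=u_{k_0}$, piecewise constant with finitely many values in $N$, is joined to $q$ over $[t_0,1]$, where $q$ is chosen so that $\int_{\partial M}\dist_N(u(x),q)\,\d x\lesssim[u]$ (such $q$ exists by averaging). The \emph{vertical} energy of slab $j$ equals $\|u_{k_j}-u_{k_{j+1}}\|_{\mathrm L^1}\le\|u_{k_j}-u\|_{\mathrm L^1}+\|u-u_{k_{j+1}}\|_{\mathrm L^1}\le 3\cdot2^{-j}C[u]$ and is independent of the slab height, so it sums to $\lesssim[u]$ — this is where the sparse choice of the $k_j$ pays off. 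The remaining \emph{horizontal} energy of slab $j$ (from $D_x$ and the wall corrections) is bounded by $(t_j-t_{j+1})\,\Gamma_j$ with $\Gamma_j<\infty$ depending only on $u_{k_j},u_{k_{j+1}}$ — essentially weighted perimeters of the finitely many generation-$k_{j+1}$ cubes — so choosing the $t_j$ recursively so small that $(t_j-t_{j+1})\Gamma_j\le 2^{-j}[u]$ (and likewise for $[t_0,1]$) makes it sum to $\lesssim[u]$ too. Altogether $\int_M\lvert\D U\rvert\le C_M''[u]$. Finally $\tr_{\partial M}U=u$ because for $t\in[t_{j+1},t_j]$ the slice $U(\cdot,t)$ lies ``between'' $u_{k_j}$ and $u_{k_{j+1}}$, both converging to $u$ in $\mathrm L^1$, and $U$ is $N$--valued by construction.

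\emph{The main obstacle} is precisely the interplay in the last paragraph: at $p=1$ the per-generation cost $2^{kd}$ is not summable, so one cannot resolve $u$ scale by scale in slabs of comparable height as for $p>1$; one must pass to a sparse sequence of generations along which the $\mathrm L^1$--approximation error — and hence the vertical energy, which does not see the slab height — is summable and absorbed into $[u]$, while the horizontal energy, now carried by only finitely many cubes per slab, is made summable a posteriori by taking the slabs very thin. Making the two estimates close with a constant depending only on $M$, and pushing the $N$--valued constraint through the whole construction without ever invoking a uniform tubular neighbourhood of $N$ (so that $N$ may be non-compact), are the two points that will require the most care.
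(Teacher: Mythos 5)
Your plan follows the same overall strategy as the paper: the trace inequalities come from the triangle inequality plus the fundamental theorem of calculus along a suitable family of curves (this is the content of Propositions~\ref{prop:yetanothertraceineq} and~\ref{prop:manifoldtrace}), and the extension is built by tiling the boundary with dyadic cubes, replacing $u$ by piecewise-constant approximations along a \emph{sparse} sequence of generations whose $\mathrm L^1$-errors are geometrically summable, stacking these in a collar, joining the coarsest layer to a well-chosen constant, and smoothing the jumps by geodesic interpolation in $N$ (the paper's Lemmas~\ref{lemma:interpolation}--\ref{lemma:interpolationonesided}). You differ in implementation: you use Christ-type cubes directly on $\partial M$, while the paper triangulates a collar and reduces cell-by-cell to the Euclidean cube extension (Proposition~\ref{prop:oncubes}) built from the strip construction (Proposition~\ref{prop:smalllemma_varconnect2map}); you realize the vertical transition by constant-speed geodesics in the $t$-variable (so its cost is exactly $\int_{\partial M}\dist_N(u_{k_j},u_{k_{j+1}})$, independent of the slab height) instead of jumps across horizontal faces smoothed afterwards; and, most substantially, you kill the lateral (wall) energy by compactness — finitely many cubes per generation, hence a finite weighted perimeter $\Gamma_j$ per slab — and by taking the slabs thin a posteriori, whereas the paper fixes the slab heights by the dyadic scales and controls the wall terms through the quantitative translation estimate \eqref{eq:distbtwcubes}. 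Your shortcut is legitimate for the compact-boundary Theorem~\ref{thm:W11extmanifold}, but note it would not give Theorem~\ref{thm:yetanothertraceineqRd} on $\R^d\times(0,\infty)$, where $\Gamma_j$ is infinite.

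There is one genuine gap in the justification: your displayed bound $\|u_k-u\|_{\mathrm L^1}\lesssim\iint_{\dist(x,y)\lesssim 2^{-k}}\dist_N(u(x),u(y))\,\d x\,\d y$ is missing the normalization $|R|^{-1}\approx 2^{kd}$ coming from the Chebyshev choice of $x_R$; without that factor the inequality is false (take $u$ oscillating at scales much finer than $2^{-k}$), and with it the convergence $\|u_k-u\|_{\mathrm L^1}\to 0$ is no longer automatic from $[u]<\infty$: it is exactly the small-scale vanishing statement that the paper isolates as Proposition~\ref{prop:ghjklkjhgfd} (proved via Lebesgue points, Lemma~\ref{lemma_L1_to_Linfty} and dominated convergence) and uses in \eqref{eq:limittozero}--\eqref{eq:alphaover2ell} to make the sparse choice of generations possible. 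You must import that result (transposed to the compact manifold $\partial M$, where the argument goes through) or reprove it; as written, ``since $\|u_k-u\|_{\mathrm L^1}\to 0$'' is unsupported. Two smaller, fixable points: a global smooth fibration of $M$ over $\partial M$ does not exist in general, so the curve family for the second trace inequality should be confined to a collar, with the rest of the compact $M$ handled by finitely many chained Poincaré-type estimates; and when you shrink the heights $t_j$ recursively you must still ensure $t_j\to 0$ (e.g.\ take $t_0\le\sum_j 2^{-j}[u]/\Gamma_j$ or reparametrize the collar), since otherwise the slabs never reach $\partial M$ and no trace is attained.
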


On the half-space \(\R^d \times (0, \infty)\), the statement is a little more delicate.
Indeed, it turns out that the condition
\[
 \iint_{\R^d \times \R^d} \dist_N (u (x), u(y))\d x \d y \text{ is finite}
\]
implies that \(u\) is \emph{constant} (see Proposition~\ref{prop:tobezerogeneral}).

\begin{theorem}
\label{thm:yetanothertraceineqRd}
Every $U \in \dotW^{1,1}(\R^d \times (0,\infty),N)$ has a locally integrable  trace $u = \tr_{\R^d \times \set{0}}U : \R^d \times \set{0} \to N$ such that for every \(R > 0\)
 		\begin{equation}\label{eq:theconcusionofthetracethm}
 		\begin{split}
 			\iint\limits_{\substack{\R^d \times \R^d \\ |x - y| \leq R}} \frac{\dist_{N}(u(x), u(y))}{R^d} \d x \d y
 			&\le C_d \iint\limits_{\substack{\R^d \times (\R^d \times (0, R)) \\ |x - y| \leq R}} \frac{\dist_{N}(u(x), U(y))}{R^{d + 1}} \d x \d y\\
 			&\leq C_d'\iint_{\R^d \times (0,R)}|\D U|,
		\end{split}
 		\end{equation}
 		where $C_d, C_d' > 0$ only depend $d$.\\
Conversely, if $u : \R^d \to N$ is measurable and if
\begin{equation}
\label{eq_aen5eig5uu4thahw8Eidieth}
\liminf_{R \to +\infty} \iint\limits_{\substack{\R^d \times \R^d \\ |x - y| \leq R}} \frac{\dist_{N}(u(x), u(y))}{R^d} \d x \d y <\infty
\end{equation}
then, there exists $U \in \dotW^{1,1}(\R^d \times (0,\infty),N)$ such that $\tr_{\R^d \times \set{0}}U = u$ and
 	\[
 		\int_{\R^d \times (0,\infty)} |\D U|
 		\leq C_d''
 		 \liminf_{R \to \infty} \iint\limits_{\substack{\R^d \times \R^d \\ |x - y| \leq R}} \frac{\dist_{N}(u(x), u(y))}{R^d} \d x \d y.
<+\infty.
 	\]
 	where $C_d''>0$ only depends on \(d\).`
\end{theorem}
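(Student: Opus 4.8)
The plan is to prove the two halves of Theorem~\ref{thm:yetanothertraceineqRd} by essentially independent arguments: the direct inequalities \eqref{eq:theconcusionofthetracethm} by integral geometry, and the converse by first extracting from the hypothesis \eqref{eq_aen5eig5uu4thahw8Eidieth} a single point $p\in N$ to which $u$ is $\mathrm L^{1}$-close, and then building the extension by an \emph{adaptive} dyadic construction whose refinement scales near $\R^{d}\times\set{0}$ are chosen sparsely enough to keep the total energy under control.

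\textbf{The direct inequalities.}
For a.e.\ $x\in\R^{d}$ the curve $t\mapsto U(x,t)$ has an absolutely continuous representative on $(0,\infty)$ with values in $N$, hence a limit $u(x):=\lim_{t\to 0^{+}}U(x,t)$, and $\dist_{N}(u(x),U(x,s))\le\int_{0}^{s}|\partial_{t}U(x,t)|\d t$. For the rightmost inequality I fix $y=(y',y_{d+1})$ with $y_{d+1}\in(0,R)$ and $|x-y|\le R$ and estimate $\dist_{N}(u(x),U(y))$ along the broken path through $(x,y_{d+1})$:
\[
\dist_{N}(u(x),U(y))\le\int_{0}^{y_{d+1}}|\partial_{t}U(x,t)|\d t+|x-y'|\int_{0}^{1}|\D U((1-\sigma)x+\sigma y',y_{d+1})|\d\sigma.
\]
Integrating in $x,y$ and dividing by $R^{d+1}$, the first term is controlled by Fubini and the second by the change of variables $z=(1-\sigma)x+\sigma y'$ (splitting $\sigma\le\tfrac12$ and $\sigma\ge\tfrac12$, a standard Riesz-potential estimate), which yields the bound by $C_{d}'\iint_{\R^{d}\times(0,R)}|\D U|$. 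For the leftmost inequality I note that, when $x,y\in\R^{d}$ with $|x-y|\le R$, the set $E_{x,y}$ of $z\in\R^{d}\times(0,R)$ with $|z-x|\le R$ and $|z-y|\le R$ has measure $\ge c_{d}R^{d+1}$ (it contains a ball over the midpoint of $x,y$ at a fixed fraction of the height $R$), so
\[
\dist_{N}(u(x),u(y))\le\frac{1}{|E_{x,y}|}\int_{E_{x,y}}\bigl(\dist_{N}(u(x),U(z))+\dist_{N}(U(z),u(y))\bigr)\d z,
\]
and integrating over $\{|x-y|\le R\}$ and interchanging the order of integration (for fixed $x$ and $z$ the admissible $y$ lie in a ball of radius $R$) gives the estimate. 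Finiteness of the middle member then shows that $u$ is locally integrable.

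\textbf{The converse, reduction to a point.}
Write $\Phi(R):=R^{-d}\iint_{|x-y|\le R}\dist_{N}(u(x),u(y))\,\d x\,\d y$ and $L:=\liminf_{R\to\infty}\Phi(R)$. If $L=0$ then $u$ is constant (this is the mechanism of Proposition~\ref{prop:tobezerogeneral}) and the extension is trivial, so assume $0<L<\infty$. The auxiliary fact I need is: \emph{there is $p\in N$ with $\int_{\R^{d}}\dist_{N}(u,p)\le C_{d}L$.} To get it, choose $R_{j}\to\infty$ with $\Phi(R_{j})\le 2L$; by Fubini there is $x_{j}\in B(0,R_{j}/2)$ with $\int_{B(x_{j},R_{j})}\dist_{N}(u(x_{j}),u)\le 2^{d+1}\omega_{d}^{-1}L$, so $p_{j}:=u(x_{j})$ satisfies $\int_{B(0,R_{j}/2)}\dist_{N}(u,p_{j})\le 2^{d+1}\omega_{d}^{-1}L$, a bound \emph{uniform in $j$}. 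For $i<j$ this forces
\[
\dist_{N}(p_{i},p_{j})\le\frac{1}{|B(0,R_{i}/2)|}\Bigl(\int_{B(0,R_{i}/2)}\dist_{N}(u,p_{i})+\int_{B(0,R_{i}/2)}\dist_{N}(u,p_{j})\Bigr)\longrightarrow 0,
\]
so $(p_{j})$ is Cauchy; $N$ being complete, $p_{j}\to p$, and passing to the limit in $\int_{B(0,R)}\dist_{N}(u,p_{j})\le 2^{d+1}\omega_{d}^{-1}L$ for fixed $R$ gives the claim. In particular $\Phi(R)\le 2\omega_{d}\int_{\R^{d}}\dist_{N}(u,p)\le C_{d}L$ for \emph{all} $R$, consistent with the direct part, and, $\dist_{N}(u,p)$ being integrable, $\Phi(r)\to 0$ as $r\to 0$ (Lebesgue differentiation plus uniform integrability).

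\textbf{The converse, the extension, and where the difficulty lies.}
Set $f:=\dist_{N}(u,p)\in\mathrm L^{1}(\R^{d})$; for a dyadic cube $Q\subset\R^{d}$ let $a_{Q}\in N$ minimise $a\mapsto\int_{Q}\dist_{N}(u,a)$ (attained, $N$ being proper by Hopf--Rinow), and let $\mathrm{prof}_{\delta}$ be the Lipschitz map of $\R^{d}$ into $N$ obtained by putting the values $a_{Q}$ on the cubes of side $\delta$ and interpolating on the dyadic mesh (geodesics on edges, then $0$-homogeneous extension from the barycentre on higher-dimensional faces); then $\|\mathrm{prof}_{\delta}-u\|_{\mathrm L^{1}}\le C_{d}\Phi(C_{d}\delta)\to 0$. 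Fix $\eps>0$ and dyadic scales $\delta_{0}>\delta_{1}>\cdots\to 0$ with $\|\mathrm{prof}_{\delta_{j}}-u\|_{\mathrm L^{1}}\le 2^{-j}\eps$, and define $U$ in three zones: $U\equiv p$ on $\R^{d}\times[2\delta_{0},\infty)$; on $\R^{d}\times[\delta_{0},2\delta_{0})$ interpolate, cell by cell, from $\mathrm{prof}_{\delta_{0}}$ at the bottom to $p$ at the top; and on $\R^{d}\times[\delta_{j+1},\delta_{j})$ keep $U=\mathrm{prof}_{\delta_{j}}$, except in the thin slab $\R^{d}\times[\delta_{j+1},2\delta_{j+1})$ where one interpolates \emph{directly} from $\mathrm{prof}_{\delta_{j}}$ down to $\mathrm{prof}_{\delta_{j+1}}$. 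Every interpolation takes place between the \emph{fixed} points $a_{Q}$, so it is insensitive to the low regularity of $u$ and to the geometry of $N$ at infinity, and a cell of size $\ell$ contributes $\lesssim\ell^{d}$ times the oscillation of its boundary data; the decisive identity is the plain triangle inequality
\[
\sum_{Q'\subset Q}|Q'|\,\dist_{N}(a_{Q},a_{Q'})\le\int_{Q}\dist_{N}(u,a_{Q})+\sum_{Q'\subset Q}\int_{Q'}\dist_{N}(u,a_{Q'}),
\]
with \emph{no penalty in the ratio $\ell(Q)/\ell(Q')$}, whence the $j$-th slab costs $\lesssim\|\mathrm{prof}_{\delta_{j}}-u\|_{\mathrm L^{1}}+\|\mathrm{prof}_{\delta_{j+1}}-u\|_{\mathrm L^{1}}\le C2^{-j}\eps$, the near-boundary zone costs $\lesssim\eps$, and the ``detaching'' layer $[\delta_{0},2\delta_{0})$ costs, by the same computation with $p$ in place of $a_{Q}$, $\lesssim\|\mathrm{prof}_{\delta_{0}}-p\|_{\mathrm L^{1}}+\eps\le\|f\|_{\mathrm L^{1}}+C\eps$; choosing $\eps=\|f\|_{\mathrm L^{1}}$ gives $\int_{\R^{d}\times(0,\infty)}|\D U|\le C_{d}''L$, and $\tr_{\R^{d}\times\set{0}}U=u$ since $\|U(\cdot,t)-u\|_{\mathrm L^{1}}\to 0$. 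The one genuinely non-obvious obstacle is precisely getting the right-hand side down to a single $\liminf$: a naïve uniform dyadic extension costs $\sum_{k}\Phi(2^{-k})=\iint_{\R^{d}\times\R^{d}}|x-y|^{-d}\dist_{N}(u,u)$, infinite for every non-constant admissible $u$; what rescues the argument is exactly (i) the reduction to a point, which trades the $\limsup$-type double integral for $\|f\|_{\mathrm L^{1}}$ and confines almost all energy to one thin ``detaching'' layer, and (ii) the displayed triangle inequality, which kills the scale-gap penalty and makes the near-boundary refinement telescopic, hence arbitrarily cheap. The remaining points --- existence of the $a_{Q}$ and of the connecting geodesics (only countably many, so no measurability issue), continuity of the mesh interpolation across shared faces, harmlessness of the point singularities of the $0$-homogeneous extensions in dimension $d+1\ge 2$, identification of the trace, and reduction to a connected $N$ --- are routine but require some care with the constants.
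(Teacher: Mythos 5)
Your direct part and your ``reduction to a point'' are correct and essentially identical to the paper's route: the middle and right inequalities in \eqref{eq:theconcusionofthetracethm} are obtained as in Proposition~\ref{prop:yetanothertraceineq} (fundamental theorem along segments, change of variables, and the lower bound $\Leb^{d+1}(E_{x,y})\ge c_d R^{d+1}$ for the common admissible set), and your Fubini-plus-Cauchy argument producing $p\in N$ with $\int_{\R^d}\dist_N(u,p)\le C_d\liminf_R\Phi(R)$ is exactly the mechanism of Proposition~\ref{prop:ballBBMII}. The bookkeeping for the extension (sparse scales $\delta_j$ chosen so that the local double integrals decay geometrically, the telescoping triangle inequality with no scale-gap penalty, the single ``detaching'' layer carrying $\lesssim\int\dist_N(u,p)$) also mirrors the paper's Propositions~\ref{prop:smalllemma_varconnect2map} and~\ref{prop:w11case}.

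The genuine gap is in the construction of $U$ itself, at the smoothing step. Your slabs are occupied by the interpolated profiles $\mathrm{prof}_{\delta_j}$, and the transition layers interpolate \emph{from $\mathrm{prof}_{\delta_j}$ to $\mathrm{prof}_{\delta_{j+1}}$} (and from $\mathrm{prof}_{\delta_0}$ to $p$); these are interpolations between two \emph{non-constant} $N$-valued maps, so your assertion that ``every interpolation takes place between the fixed points $a_Q$'' is not what the construction, as described, does. To bound $\int|\D U|$ over such a layer you need the horizontal derivative, which forces you to differentiate the connecting geodesic $\gamma_{a(x),b(x)}$ with respect to its endpoints; for the manifolds allowed here ($N$ complete, possibly non-compact, no curvature bounds, hence no uniform tubular neighbourhood permitting a Lipschitz nearest-point projection, and minimizing geodesics neither unique nor Lipschitz in their endpoints) this derivative is not controlled by $\dist_N(a(x),b(x))$ and the profile energies, so the layer cost is not $\lesssim\int\dist_N(\mathrm{prof}_{\delta_j},\mathrm{prof}_{\delta_{j+1}})$ as claimed. (A secondary, minor inaccuracy: $\mathrm{prof}_\delta$ built by homogeneous extension is only $\dotW^{1,1}$, not Lipschitz, which is harmless.) The repair is precisely the paper's device: keep the map \emph{piecewise constant} on an adaptive family of cuboids (Proposition~\ref{prop:smalllemma_varconnect2map}, a BV map whose jump set carries all the cost you tally) and smooth only in a neighbourhood of each face, where the two adjacent values are constants $a,b\in N$, via the ansatz $\gamma\bigl(2t/\dist(x,\partial P)\bigr)$ for a single fixed geodesic $\gamma$ (Lemmas~\ref{lemma:interpolation} and~\ref{lemma:interpolationonesided}); each transition then costs at most $C\dist_N(a,b)\Leb^d(P)$ with no endpoint differentiation. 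With that replacement your estimates and trace identification go through and the argument becomes the paper's proof.
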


Let us explain how the double integral in \eqref{eq:theconcusionofthetracethm}
and its limit in \eqref{eq_aen5eig5uu4thahw8Eidieth}
relate to more usual notions of integrability. First one always has for \(R > 0\) by the triangle inequality.
 \begin{equation}
 \iint\limits_{\substack{\R^d \times \R^d \\ |x - y| \leq R}} \frac{\dist_{N}(u(x), u(y))}{\Leb^d (\B^d (0, R))} \d x \d y
 		 \leq  2\inf_{b \in N}\int_{\R^d}\dist_N(u(x),\pointN) \d x.
 \end{equation}
The reverse inequality is also true asymptotically.

 \begin{theorem}\label{thm:BBMII}
 	If $u : \R^d\to N$, there exists $b_\ast \in N$ such that
 	\begin{equation}
 	\label{eq:limsupforbIIthm}
 		\int_{\mathbb{R}^d} \dist_{N}(u(x), b_\ast) \d x
 		= \frac{1}{2} \lim_{R \to +\infty}  \iint\limits_{\substack{\R^d \times \R^d \\ |x - y| \leq R}} \frac{\dist_{N}(u(x), u(y))}{\Leb^d (\B^d (0, R))} \d x \d y.
 	\end{equation}
 \end{theorem}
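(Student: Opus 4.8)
The plan is to set $L := \inf_{b \in N} \int_{\R^d} \dist_N(u(x),b)\d x \in [0,+\infty]$ and $I(R) := \frac{1}{\Leb^d(\B^d(0,R))}\iint_{|x-y|\le R}\dist_N(u(x),u(y))\d x\d y$, and to prove both that the infimum defining $L$ is attained at some $b_\ast$ and that $\lim_{R\to\infty}I(R) = 2L$; the asserted identity then follows at once. The bound $\tfrac12 I(R)\le L$, hence $\tfrac12\limsup_{R\to\infty}I(R)\le L$, is the triangle inequality already recorded just before the statement. Attainment is elementary: if $\int_{\R^d}\dist_N(u(x),b)\d x$ and $\int_{\R^d}\dist_N(u(x),b')\d x$ are both finite, then $\dist_N(b,b')\,\Leb^d(\R^d) = \int_{\R^d}\dist_N(b,b')\d x\le\int_{\R^d}\dist_N(u(x),b)\d x + \int_{\R^d}\dist_N(u(x),b')\d x<\infty$ forces $b=b'$; so $b\mapsto\int_{\R^d}\dist_N(u(x),b)\d x$ is finite at most at one point $b_\ast$, and then $L=\int_{\R^d}\dist_N(u(x),b_\ast)\d x$ (with $b_\ast$ arbitrary if this functional is everywhere $+\infty$).

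It remains to show $\liminf_{R\to\infty}I(R)\ge 2L$. When $L<\infty$, I would localise: fix $T>0$ and bound $\iint_{|x-y|\le R}\dist_N(u(x),u(y))$ from below by the integral over the part of the domain where $x\in\B^d(0,T)$ or $y\in\B^d(0,T)$, which by symmetry equals twice the integral over $\{x\in\B^d(0,T)\}$ minus the integral over $\B^d(0,T)\times\B^d(0,T)$ (the latter being at most $2\Leb^d(\B^d(0,T))L$). On $\{x\in\B^d(0,T)\}$, the reverse triangle inequality $\dist_N(u(x),u(y))\ge\dist_N(u(x),b_\ast)-\dist_N(u(y),b_\ast)$ together with $\int_{\B^d(x,R)}\dist_N(u(y),b_\ast)\d y\le L$ give $\int_{\B^d(x,R)}\dist_N(u(x),u(y))\d y\ge\Leb^d(\B^d(0,R))\dist_N(u(x),b_\ast)-L$. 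Integrating over $x\in\B^d(0,T)$, dividing by $\Leb^d(\B^d(0,R))$ and letting $R\to\infty$ and then $T\to\infty$ yields $\liminf_{R\to\infty}I(R)\ge 2\int_{\R^d}\dist_N(u(x),b_\ast)\d x=2L$.

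The delicate case is $L=+\infty$, where one must show $\lim_{R\to\infty}I(R)=+\infty$; the naive attempt — proving that $h\mapsto\int_{\R^d}\dist_N(u(x),u(x+h))\d x$ tends to $+\infty$ — fails, since this quantity can vanish along a sequence $h\to\infty$. Instead I would argue by contradiction. If $\liminf_{R\to\infty}I(R)<\infty$, pick $\rho_k\to\infty$ with $\sup_k I(\rho_k)<\infty$ and set $R_k:=\rho_k/2$, which we may take increasing; since $\B^d(0,R_k)\times\B^d(0,R_k)\subset\{|x-y|\le\rho_k\}$ and $\dist_N\ge0$, we get $\frac{1}{\Leb^d(\B^d(0,R_k))^{2}}\iint_{\B^d(0,R_k)\times\B^d(0,R_k)}\dist_N(u(x),u(y))\d x\d y\le \frac{2^d I(\rho_k)}{\Leb^d(\B^d(0,R_k))}$, so there is $y_k\in\B^d(0,R_k)$ with $\int_{\B^d(0,R_k)}\dist_N(u(x),u(y_k))\d x\le 2^d I(\rho_k)$; set $b_k:=u(y_k)$ and $M:=\sup_k 2^d I(\rho_k)<\infty$. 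Comparing the bounds on nested balls, $\Leb^d(\B^d(0,R_j))\dist_N(b_j,b_k)\le\int_{\B^d(0,R_j)}\big(\dist_N(u(x),b_j)+\dist_N(u(x),b_k)\big)\d x\le 2M$ for $j\le k$, so $(b_k)$ is Cauchy and, $N$ being complete, converges to some $b_\ast$; passing to the limit in $\int_{\B^d(0,S)}\dist_N(u(x),b_\ast)\d x\le\int_{\B^d(0,R_k)}\dist_N(u(x),b_k)\d x+\Leb^d(\B^d(0,S))\dist_N(b_k,b_\ast)$ for $R_k\ge S$ gives $\int_{\B^d(0,S)}\dist_N(u(x),b_\ast)\d x\le M$ for every $S$, hence $\int_{\R^d}\dist_N(u(x),b_\ast)\d x\le M<\infty$, contradicting $L=+\infty$. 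Thus $\lim_{R\to\infty}I(R)=+\infty=2L$ here too, and all cases together give $\int_{\R^d}\dist_N(u(x),b_\ast)\d x=L=\tfrac12\lim_{R\to\infty}I(R)$. I expect this last case to be the main obstacle: the lower bound cannot be localised because the relevant energy of $u$ concentrates at infinity, and the crux is extracting, through completeness of $N$, a single finite-energy base point $b_\ast$ from the mere boundedness of the oscillation averages.
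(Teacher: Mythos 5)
Your proof is correct, and it organizes the work differently from the paper. The paper runs a single argument: assuming $\liminf_{R\to\infty}\Theta(R)<\infty$, it extracts points $y_n\in\B^d_{(1-\eta_n)R_n}$ with small averaged oscillation over $\B^d_{\eta_n R_n}$, where the radii $\eta_n R_n=\min(R_n/2,R_n^{1/2})\to\infty$ are tuned so that the error terms $(2\eta_n)^d\Theta(2\eta_n R_n)$ and $(1-\eta_n)^{-d}$ disappear asymptotically; completeness gives $u(y_n)\to b$, and Fatou then yields the \emph{sharp} bound $\int\dist_N(u,b)\le\frac12\liminf\Theta$, which combined with the trivial upper bound $\frac12\Theta(R)\le\int\dist_N(u,b)$ gives existence of the limit and the identity in one stroke. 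You instead split into cases: when $L=\inf_b\int\dist_N(u,\cdot)$ is finite you obtain the sharp lower bound $\liminf I(R)\ge 2L$ by an elementary localization (reverse triangle inequality on $\B^d(x,R)$ for $x\in\B^d(0,T)$, then $R\to\infty$, $T\to\infty$), needing no completeness at that stage; and you use a Cauchy-extraction only to rule out the degenerate case $L=\infty$ with $\liminf I<\infty$, where your cruder version of the paper's construction (fixed ratio $R_k=\rho_k/2$, constant $2^d$ instead of the vanishing $\eta_n$) suffices because only finiteness, not sharpness, is at stake. Both routes are complete and rest on the same two ingredients (triangle inequality for the upper bound, completeness of $N$ to produce a base point); the paper's refinement of the extraction buys the sharp constant directly and avoids the case distinction, while your version isolates the sharp identity in a simpler, more transparent estimate and keeps the delicate extraction confined to a contradiction argument where precision is irrelevant.
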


 The limit \eqref{eq:limsupforbIIthm} is reminiscent to  Bourgain-Brezis-Mironescu-Maz\cprime ya-Shaposhnikova-type formulae \cite{bourgain2001another}\cite{Mazya_Shaposhnikova_2002}. The volume of the $d$-dimensional ball of radius $R$ is denoted $\Leb^d (\B^d (0, R))$ and sometimes $|\B^d_R|$.

Our methods of proof also yield an extension result on strips which is the manifold constrained counterpart of \textsc{Leoni} and \textsc{Tice}'s \cite[Theorem 1.8]{leoni2019traces}.

\begin{theorem}\label{thm:w11case}
Every $U \in \dotW^{1,1}(\R^d \times (0,1),N)$ has traces
then $u_0 = \tr_{\R^d \times \{0\}}U$ and $u_1 = \tr_{\R^d \times \{1\}} U$ satisfying
\begin{equation}
\begin{split}
&\int_{\R^d} \dist_N (u_0 (x), u_1 (x)) \d x
+
\sum_{i \in \{0, 1\}} \iint_{\substack{\R^d \times (\R^d \times (0, 1))\\ |x - y| \le 1}}\dist_N (u_i (x), u_i (y)) \d x \d y\\
&\qquad \le C_d
\sum_{i \in \{0, 1\}}
\iint_{\substack{\R^d \times \R^d\\ |x - y| \le 1}}\dist_N (u_i (x), U (y)) \d x \d y\\
&\qquad \le C_d'\iint_{\R^d \times (0,1)}|\D U|.
\end{split}
\end{equation}
where $C_d, C_d' > 0$ only depend $d$.\\
Conversely, given measurable $u_0,u_1: \R^d \to N$,
there exists $U \in \dotW^{1,1}(\R^d \times (0,1), N)$ such that  $\Tr_{\R^d \times \{0\}}U = u_0$, $\Tr_{\R^d \times \{1\}}U = u_1$ and
	\begin{multline}
		\label{eq:w11intro}\int_{\R^d \times (0,1)} |\D U|\leq C_d''\int_{\R^d} \dist_N(u_0(x),u_1(x)) \d x\\ +C_d''\sum_{i \in \set{0,1}} \iint_{\substack{\R^d \times \R^d \\|x - y|\leq 1}} \dist_N(u_i(x),u_i(y)) \d x \d y
	\end{multline}
	where $C_d'' >0$ only depends on $d$, provided the right-hand side is finite.
\end{theorem}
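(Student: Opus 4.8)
The plan is to reduce Theorem~\ref{thm:w11case} to the already-proved half-space and compact cases rather than redo the dyadic construction from scratch, by treating the strip $\R^d\times(0,1)$ as a ``doubly bounded'' version of the half-space picture. For the direct (trace) inequality, the second estimate
\[
\sum_{i}\iint_{|x-y|\le 1}\dist_N(u_i(x),U(y))\d x\d y\le C_d'\iint_{\R^d\times(0,1)}|\D U|
\]
should follow exactly as in Theorem~\ref{thm:yetanothertraceineqRd}: cover $\R^d\times(0,1)$ by unit-scale Whitney-type slabs and use the one-dimensional fundamental theorem of calculus in the normal direction to control $\dist_N(u_i(x),U(y))$ by the integral of $|\D U|$ along a path from $y$ to $(x,i)$, then integrate. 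The first inequality is then a pure triangle-inequality/averaging argument: for the term $\int_{\R^d}\dist_N(u_0(x),u_1(x))\d x$ one interpolates through $U(y)$ for $y$ in a unit slab straddling the middle of the strip, writing $\dist_N(u_0(x),u_1(x))\le \dist_N(u_0(x),U(y))+\dist_N(U(y),u_1(x))$ and averaging $y$ over that slab; for $\iint \dist_N(u_i(x),u_i(y))\d x\d y$ one interpolates through $U(z)$ with $z$ ranging over the slab above the segment $[x,y]\times\{i\}$, which is legitimate since $|x-y|\le 1$.

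For the converse (extension), the idea is to build $U$ by gluing two half-strip extensions. Split $\R^d\times(0,1)=\R^d\times(0,1/2)\cup\R^d\times(1/2,1)$. On $\R^d\times(0,1/2)$ I would run the dyadic extension machinery used for Theorem~\ref{thm:W11extmanifold}/Theorem~\ref{thm:yetanothertraceineqRd}, but with the dyadic cubes truncated at scale $1$ (no cubes larger than unit size, since the strip has bounded height), producing $U_0$ with $\tr_{\R^d\times\{0\}}U_0=u_0$, with the boundary value on $\R^d\times\{1/2\}$ being some mollified/averaged version $\tilde u_0$ of $u_0$ at scale $\sim 1$, and with
\[
\iint_{\R^d\times(0,1/2)}|\D U_0|\le C_d\iint_{|x-y|\le 1}\dist_N(u_0(x),u_0(y))\d x\d y.
\]
Symmetrically build $U_1$ on $\R^d\times(1/2,1)$ from $u_1$, with middle value $\tilde u_1$. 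The remaining task is to reconcile the two middle traces $\tilde u_0$ and $\tilde u_1$: insert a thin transition layer (or rather absorb it by not going all the way to height $1/2$), and interpolate along the $N$-valued geodesic between $\tilde u_0(x)$ and $\tilde u_1(x)$ — using that $N$ is complete and the relevant pairs are at controlled distance, so a measurable selection of minimizing geodesics exists. The cost of this geodesic homotopy layer is $\int_{\R^d}\dist_N(\tilde u_0(x),\tilde u_1(x))\d x$, which by the averaging/Poincaré estimates on the mollifications is bounded by
\[
C_d\int_{\R^d}\dist_N(u_0(x),u_1(x))\d x + C_d\sum_{i}\iint_{|x-y|\le 1}\dist_N(u_i(x),u_i(y))\d x\d y,
\]
exactly the right-hand side of \eqref{eq:w11intro}.

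The main obstacle I anticipate is the gluing step: ensuring that the two separately constructed extensions $U_0$ and $U_1$ have genuinely matching (or interpolable) traces on the interface $\R^d\times\{1/2\}$ \emph{and} that $U$ stays in $\dotW^{1,1}$ across the interface with no spurious jump contribution. This requires either (i) arranging the dyadic construction so that both pieces take the \emph{same} explicit piecewise-affine-on-cubes value at height $1/2$ — feasible if I use the same Whitney decomposition of $\R^d$ for both — reducing the interface term to $\int_{\R^d}\dist_N(\text{proj of }u_0,\text{proj of }u_1)$, or (ii) carefully estimating the geodesic-interpolation layer, where the subtlety is the measurable choice of geodesics in the (possibly non-simply-connected, non-compact) manifold $N$ and the $\dotW^{1,1}$ bound on the homotopy, which needs the two endpoint maps to be close in $\mathrm L^1_{\mathrm{loc}}$ at unit scale. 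A secondary technical point is that all constants must be shown to depend only on $d$ and not on anything about $N$, which forces every geometric estimate (geodesic interpolation, nearest-point projection onto $N$) to be used only in its scale-invariant, $N$-independent form, as was already achieved in the proofs of the earlier theorems. Once the interface is handled, verifying $\tr_{\R^d\times\{0\}}U=u_0$ and $\tr_{\R^d\times\{1\}}U=u_1$ and assembling the energy bound are routine.
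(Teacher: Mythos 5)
Your overall strategy coincides with the paper's: the trace inequalities via the fundamental theorem of calculus along segments plus triangle-inequality averaging through $U$ on an intermediate region, and the extension via dyadic piecewise approximations of $u_0$ and $u_1$ refining toward their respective boundary planes, smoothed by geodesic interpolation, with the two halves meeting across a middle interface whose cost is handled by the triangle inequality through $u_0,u_1$. This is exactly what Proposition \ref{prop:smalllemma_varconnect2map} builds (a single piecewise-constant BV map on $\R^d\times(-L,L)$ whose jump set splits into the middle interface, the ``parallel'' and the ``perpendicular'' faces) and what Proposition \ref{prop:w11case} then smooths using Lemma \ref{lemma:interpolation}; your packaging as two half-strip extensions glued at height $1/2$ is only a cosmetic reorganisation of that construction.

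The one genuine gap is how you define the middle traces $\tilde u_0,\tilde u_1$ and, more generally, the cube-scale approximants. A ``mollified/averaged version of $u_i$ at scale $\sim 1$'' or a ``piecewise-affine-on-cubes value'' does not take values in $N$, and repairing this by nearest-point projection cannot be done with constants depending only on $d$ for a general complete, possibly noncompact target: the projection is defined and Lipschitz only on a tubular neighbourhood whose width depends on $N$, and $\mathrm L^1$-scale data give no smallness of oscillation to keep you inside it. Moreover, your claim that the gluing layer costs only $\int_{\R^d}\dist_N(\tilde u_0,\tilde u_1)$ presupposes that the geodesic homotopy between two \emph{non-constant} maps has no horizontal energy, which is false in general and brings in the measurable-selection and endpoint-dependence issues you yourself flag. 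The paper removes both difficulties at once by choosing, on each dyadic cube $Q$, the value $u_i(x_Q)$ at a Lebesgue point selected through the mean-value inequality \eqref{eq:medianinequality}: every approximant is then $N$-valued and piecewise constant, each jump face carries a single pair $(a,b)\in N\times N$, so Lemma \ref{lemma:interpolation} applies face by face with cost $4\dist_N(a,b)$ times the face measure and no horizontal contribution, and the middle interface is estimated as in \eqref{eq:icontrol3}. Finally, your sketch takes for granted that the truncated Whitney construction has summable cost over its infinitely many layers; this requires choosing the refinement scales $k_n$ along a sufficiently fast subsequence, using Proposition \ref{prop:ghjklkjhgfd} as in \eqref{eq:alphaover2ell} and \eqref{eq:distbtwcubes}. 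With these corrections your plan becomes the paper's proof.
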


 \emph{Plan of the paper}. 
 In Section \ref{sec:precise-assumptions-and-vocabulary-in-use}, we explain the precise assumptions we make on $N$ throughout the paper. We also explain the slightly weaker assumptions we will work with in the extension results, see \eqref{eq:thelimitassumption}, and prove
 Theorem \ref{thm:BBMII}.
 In Section \ref{sec:trace-inequalities} we write the proof of Theorem  \ref{thm:yetanothertraceineqRd} as well as a variant with manifold domain.
 The proof of Theorem \ref{thm:w11case} consists in two main steps: in Section \ref{sec:joining-two-maps-by-a-map-of-bounded-variation} we extend by a map which is constant on a countable union of rectangles that is only of bounded variation, \emph{i.e.} $\BV(\R^d \times (-1,1),N)$ and in Section \ref{sec:connecting-two-maps-by-a-sobolev-map-and-corollaries} we smoothen the BV-extension to get a $\dotW^{1,1}$ map and therefore prove Theorem \ref{thm:w11case}.
 Theorem \ref{thm:W11extmanifold} is then proven in Section \ref{sec:extension-from-manifold-domains} using a variant of Theorem \ref{thm:w11case} on cubes.

\section{Manifold-valued integrable mappings}\label{sec:precise-assumptions-and-vocabulary-in-use}

In this section, we write the precise assumptions on $N$, we define integrability, prove Theorem \ref{thm:BBMII} in the form of Proposition \ref{prop:ballBBMII}).

	\emph{The manifold $N$.} We assume that $N$ is a  (path-)connected complete manifold endowed with a distance function $\dist_N: N \times N \to [0,\infty)$ on it. The manifold is assumed to be a submanifold $N \subset \R^\nu,\nu \in \mathbb N\setminus \set{0}$ isometrically embedded by Nash isometric embedding theorem \cite{nash_imbedding_1956}; by \cite{muller_note_2009}, even if the manifold $N$ is not assumed to be compact, we may assume that $N$ is a closed set of $ \R^\nu$.
	The completeness assumption ensures that any two points can be connected by a geodesic of minimal length. This result is known as the Hopf-Rinow theorem \cite[Theorem 6.19]{lee2018introduction}. We also assume separability of $N$ : there exists a countable dense set in $N$ for the metric $\dist_N$; this assumption is of use in Proposition \ref{prop_lebesgue} to ensure that the essential range of a measurable map is separable.
	As said in Section \ref{sec:introduction-and-main-results}, $M$ is a Riemannian manifold that carry a compact boundary $\partial M$.

 A measurable map $u : \R^d \to N$ is said \emph{integrable} whenever
 \begin{equation}\label{eq:integrabili}
 	\inf_{b \in N}
 	\int_{\R^d}\dist_N(u(x),b) \d x <
 	\infty.
 \end{equation}

When $N = \R^\nu$ and $\dist_N(p,q) = |p - q|$ for all $p,q \in N$, we recover classical integrable functions augmented by their translation in $\R^\nu$.

It follows from the next proposition that the point \(b\) appearing in the condition \eqref{eq:integrabili} is unique.
 \begin{proposition}
 \label{proposition_b_unique}
 Let $u : \R^d \to N$ and $v : \R^d \to N$ be measurable, and let
 \(b, c \in N\).
 If
 \begin{equation*}
  \int_{\R^d} \!\dist_N(u(x),b) \d x, 
  \int_{\R^d} \!\dist_N(v(x),c) \d x
  \text{ and } 
  \int_{\R^d} \!\dist_N (u(x), v(x)) \d x
 \end{equation*}
 are all finite
then \(b = c\).
 \end{proposition}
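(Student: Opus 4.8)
The plan is to argue by contradiction using the triangle inequality to show that if $b \neq c$, then $\R^d$ would have finite measure, which is absurd. Suppose $b \neq c$, so $\dist_N(b,c) = \delta > 0$. For any $x \in \R^d$, the triangle inequality gives
\[
\delta = \dist_N(b,c) \leq \dist_N(b, u(x)) + \dist_N(u(x), v(x)) + \dist_N(v(x), c).
\]
Integrating this pointwise inequality over $\R^d$ yields
\[
\delta \cdot \Leb^d(\R^d) \leq \int_{\R^d} \dist_N(u(x),b)\d x + \int_{\R^d} \dist_N(u(x),v(x))\d x + \int_{\R^d}\dist_N(v(x),c)\d x,
\]
and by hypothesis the right-hand side is finite. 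Since $\delta > 0$, this forces $\Leb^d(\R^d) < \infty$, a contradiction. Hence $b = c$.

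A few routine points need to be checked along the way. First, one should confirm that the three integrands $x \mapsto \dist_N(u(x),b)$, $x \mapsto \dist_N(v(x),c)$, and $x \mapsto \dist_N(u(x),v(x))$ are measurable, which follows from measurability of $u$ and $v$ together with continuity of $\dist_N$ (and the separability assumption on $N$, already invoked in the text for essential ranges). Second, the pointwise triangle inequality chain is valid for \emph{every} $x$, so integrating preserves it; there is no integrability subtlety since all terms are nonnegative and the right side is assumed finite.

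I do not expect any real obstacle here: the statement is essentially the observation that $\mathrm{L}^1$-type "centering points'' are unique precisely because the ambient measure space has infinite total mass. The only thing to be a little careful about is that the hypothesis is stated with $u$ and $v$ possibly distinct and with three separate finiteness assumptions rather than two; the proof above uses all three in exactly the combination supplied. One could also note as a remark that the same argument shows uniqueness is special to infinite-measure domains: on a finite-measure domain the conclusion fails.
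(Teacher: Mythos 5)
Your proof is correct and is essentially the paper's own argument: both integrate the pointwise triangle inequality $\dist_N(b,c)\le \dist_N(b,u(x))+\dist_N(u(x),v(x))+\dist_N(v(x),c)$ over $\R^d$ and conclude from finiteness of the right-hand side that the constant $\dist_N(b,c)$ must vanish, since $\R^d$ has infinite measure. Your contradiction framing and the measurability remarks are just a more explicit spelling-out of the same one-line proof.
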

\begin{Proof}{Proposition}{proposition_b_unique}
By the triangle inequality,
\begin{equation*}
\begin{split}
\int_{\R^d} \!\dist_N (b, c) \d x
&\le \int_{\R^d} \!\dist_N(b, u(x)) \d x
+
\int_{\R^d} \!\dist_N (u(x), v(x)) \d x
+
\int_{\R^d} \!\dist_N(v (x), c) \d x
\end{split}
\end{equation*}
 is finite and the conclusion follows.
\end{Proof}

 \begin{proposition} \label{prop:tobezerogeneral}
 If \(u: \R^d \to N\) is measurable and if
 	\[
 	\iint_{\R^d \times \R^d} \dist_N(u(x),u(y)) \d x\d y < \infty,
 	\]
 	then $u$ is constant.
 \end{proposition}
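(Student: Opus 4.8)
The plan is to show that the $\mathrm L^1$-modulus of continuity
\[
\phi(h) := \int_{\R^d} \dist_N\bigl(u(x), u(x+h)\bigr) \d x \in [0, +\infty]
\]
vanishes for almost every $h \in \R^d$, and then to deduce that $u$ is a.e.\ translation invariant, hence a.e.\ constant. First I would record the elementary properties of $\phi$. The map $(x,h) \mapsto \dist_N(u(x), u(x+h))$ is measurable since $\dist_N$ is continuous and $u$ is measurable, so by Tonelli's theorem, the substitution $y = x+h$, and the translation invariance of the Lebesgue measure,
\[
\int_{\R^d} \phi(h) \d h = \iint_{\R^d \times \R^d} \dist_N\bigl(u(x), u(y)\bigr) \d x \d y < +\infty,
\]
so that $\phi \in \mathrm L^1(\R^d)$; in particular $\phi(h) < +\infty$ for a.e.\ $h$. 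Moreover $\phi(0) = 0$ and $\phi(-h) = \phi(h)$, and the triangle inequality for $\dist_N$ combined once more with translation invariance gives the subadditivity $\phi(g + h) \le \phi(g) + \phi(h)$ for all $g, h \in \R^d$.

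The main point is then that $\phi = 0$ almost everywhere. Arguing by contradiction, suppose $\phi$ is not a.e.\ zero; since $\phi$ is finite a.e., one may pick $h_0 \in \R^d$ with $0 < \phi(h_0) =: c < +\infty$. From subadditivity, $c = \phi(h_0) \le \phi(h_0 + h) + \phi(-h) = \phi(h_0 + h) + \phi(h)$ for every $h$, so whenever $\phi(h) < c/2$ one has $\phi(h_0 + h) > c/2$, that is,
\[
\set{h \in \R^d : \phi(h) < c/2} + h_0 \subseteq \set{h \in \R^d : \phi(h) > c/2}.
\]
By Chebyshev's inequality the set on the right has measure at most $\tfrac{2}{c}\lVert \phi \rVert_{\mathrm L^1(\R^d)} < +\infty$, whereas the set on the left is a translate of the complement of that same finite-measure set and hence has infinite measure --- a contradiction. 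This is the step where the domain being all of $\R^d$ rather than a bounded set is essential, and I expect it to be the crux of the argument; the rest is bookkeeping.

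It remains to conclude. Since $\phi = 0$ a.e., for a.e.\ $h$ one has $\dist_N(u(x), u(x+h)) = 0$, i.e.\ $u(x+h) = u(x)$, for a.e.\ $x$. Applying Fubini's theorem to the measurable set $\set{(x,h) \in \R^d \times \R^d : u(x+h) \ne u(x)}$ shows it is Lebesgue-null, so for a.e.\ $x$ the corresponding $h$-slice is null; fixing one such point $x_0$ and substituting $y = x_0 + h$ yields $u(y) = u(x_0)$ for a.e.\ $y \in \R^d$, which is the claim. Note that neither completeness nor the embedding of $N$ is used in this argument, only that $\dist_N$ is a genuine finite continuous distance; the separability of $N$ would give an alternative ending through a countable dense subset of $N$, but is not needed here.
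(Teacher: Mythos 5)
Your proof is correct, but it follows a genuinely different route from the paper's. The paper argues in two lines: since the double integral is finite while $\R^d$ has infinite measure, Fubini's theorem produces points $y_n$ with $\int_{\R^d}\dist_N(u(x),u(y_n))\d x\le 1/n$; the uniqueness statement (Proposition~\ref{proposition_b_unique}, applied with $v=u$) forces all the values $u(y_n)$ to coincide with a single $b\in N$, and then $\int_{\R^d}\dist_N(u,b)=0$ gives $u=b$ a.e. You instead study the $\mathrm L^1$-modulus $\phi(h)=\int_{\R^d}\dist_N(u(x),u(x+h))\d x$, note it is integrable by Tonelli, symmetric and subadditive, and rule out $\phi\not\equiv 0$ by a Steinhaus-type argument: if $0<\phi(h_0)=c<\infty$, subadditivity sends the infinite-measure set $\{\phi<c/2\}$ (infinite because $\{\phi\ge c/2\}$ has finite measure by Chebyshev --- your phrase ``complement of that same finite-measure set'' should really refer to $\{\phi\ge c/2\}$ rather than $\{\phi>c/2\}$, but both are finite-measure, so nothing is lost) into the finite-measure set $\{\phi>c/2\}$ by the measure-preserving translation $h\mapsto h+h_0$, a contradiction; a final Fubini argument upgrades ``$\phi=0$ a.e.'' to ``$u$ constant a.e.''. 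Both proofs hinge on the same structural fact, the infinite measure of $\R^d$ against a finite double integral, but the paper's version is shorter, reuses its uniqueness lemma, and directly exhibits the constant as a value $u(y_n)$ of the map, which is convenient later in the paper; yours is self-contained, purely translation-theoretic, and makes explicit the quantitative mechanism (integrability plus subadditivity of the modulus of continuity) that forbids nonconstancy, at the cost of an extra Fubini step to pass from translation invariance to constancy. Neither argument needs completeness or the embedding of $N$, as you observe.
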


\begin{Proof}{Proposition}{prop:tobezerogeneral}
For every \(n \in \mathbb{N}_*\), by Fubini's theorem and comparison, there exists some point \(y_n \in \R^d\) such that
\begin{equation}
  \int_{\R^d} \dist_N(u(x),u(y_n)) \d x
  \le \frac{1}{n}.
\end{equation}
By Proposition~\ref{proposition_b_unique}, for every \(m \in \mathbb{N}_*\), \(u (y_n) = u (y_m)\). Setting \(b = u (y_n)\), we have
\begin{equation}
  \int_{\R^d} \dist_N(b,u(x)) \d x = 0,
\end{equation}
and thus \(u = b\) almost everywhere on \(\R^d\).
 \end{Proof}

The integrability can in fact be recovered from a variant of the Bourgain-Brezis-Minorescu-Maz'ya-Shaposhnikova formula \cite{bourgain2001another,brezis2002how,Mazya_Shaposhnikova_2002} (see also \cite{Gu_Yung_2021,Dominguez_Milman_2023}).

\begin{proposition}\label{prop:ballBBMII}
	If $u : \R^d\to N$ is measurable, then for every \(R > 0\) and \(b \in N\),
	\begin{equation}\label{eq:supBBMII}
		\iint\limits_{\substack{\R^d \times \R^d\\ |x - y| \leq R}} \frac{\dist_{N}(u(x), u(y))}{\Leb^d(\B^d(0,R))}\d x \d y \\\leq 2 \inf_{b \in N}\int_{\R^d} \dist_{N}(u(x),\pointN) \d x.
	\end{equation}
	and there exists $b_* \in N$ such that
	\begin{equation}\label{eq:estimateforb}
		\int_{\mathbb{R}^d} \dist_{N}(u(x), b_*) \d x =
		\frac{1}{2} \lim_{R \to \infty}  \iint\limits_{\substack{\R^d \times \R^d \\ |x - y| \leq R}} {\frac{\dist_{N}(u(x), u(y))}{\Leb^d(\B^d(0,R))} \d x \d y}.
	\end{equation}
\end{proposition}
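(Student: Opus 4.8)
The plan is to split the statement into the easy inequality \eqref{eq:supBBMII}, which is the triangle-inequality estimate already announced in the introduction, and the more delicate existence of an optimal $b_*$ realizing \eqref{eq:estimateforb}. For \eqref{eq:supBBMII}, I would fix $b \in N$, write $\dist_N(u(x),u(y)) \le \dist_N(u(x),b) + \dist_N(b,u(y))$, integrate over the region $\{|x-y|\le R\}$, and use Fubini together with the fact that for each fixed $x$ the slice $\{y : |x-y|\le R\}$ has measure $\Leb^d(\B^d(0,R))$. This produces exactly $2\int_{\R^d}\dist_N(u(x),b)\d x$ after dividing by $\Leb^d(\B^d(0,R))$, and then one takes the infimum over $b$.

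For \eqref{eq:estimateforb} the first thing to settle is the case where the right-hand side (equivalently, by \eqref{eq:supBBMII}, the infimum in \eqref{eq:integrabili}) is infinite: then \eqref{eq:supBBMII} shows the limit superior is already $+\infty$, and I can take $b_*$ to be any point of $N$, since the left-hand side is then also $+\infty$; so one reduces to the integrable case. Assuming $u$ is integrable, pick a minimizing sequence $b_n \in N$ with $\int_{\R^d}\dist_N(u(x),b_n)\d x \to \inf_b \int_{\R^d}\dist_N(u(x),b)\d x =: I$. The $b_n$ form a bounded sequence: fixing any reference point $c$ and using the triangle inequality on a set of positive finite measure where $\dist_N(u(x),c)$ is bounded, one controls $\dist_N(b_n,c)$. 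Since $N$ is a complete (finite-dimensional) manifold, bounded sets are relatively compact, so along a subsequence $b_n \to b_*$ in $N$; by Fatou's lemma $\int_{\R^d}\dist_N(u(x),b_*)\d x \le \liminf_n \int_{\R^d}\dist_N(u(x),b_n)\d x = I$, hence $b_*$ attains the infimum and $\int_{\R^d}\dist_N(u(x),b_*)\d x = I$.

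It then remains to show $\lim_{R\to\infty}$ of the normalized double integral equals $2I$. The upper bound $\limsup_{R\to\infty}(\cdots) \le 2I$ is \eqref{eq:supBBMII}. For the lower bound I would argue that for each fixed $x$,
\[
\frac{1}{\Leb^d(\B^d(0,R))}\int_{\{y:|x-y|\le R\}} \dist_N(u(x),u(y))\d y \ge \dist_N(u(x), b_*) - \frac{1}{\Leb^d(\B^d(0,R))}\int_{\{y:|x-y|\le R\}}\dist_N(u(y),b_*)\d y,
\]
and the subtracted term is bounded by $\Leb^d(\B^d(0,R))^{-1} I \to 0$ as $R\to\infty$. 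Integrating in $x$ over $\R^d$ and applying Fatou's lemma (or monotone/dominated control of the error term) as $R\to\infty$ gives $\liminf_{R\to\infty}(\cdots) \ge 2\int_{\R^d}\dist_N(u(x),b_*)\d x = 2I$, which combined with the upper bound yields the claimed limit. The main obstacle is the lower bound: one must be careful that the error term $\Leb^d(\B^d(0,R))^{-1}\int_{\{|x-y|\le R\}}\dist_N(u(y),b_*)\d y$, though it tends to $0$ pointwise in $x$, is handled uniformly enough to pass to the limit under the $x$-integral — this is where integrability of $u$ (finiteness of $I$) is used decisively, via dominated convergence against the integrable majorant $x \mapsto \dist_N(u(x),b_*) + (\text{const})$.
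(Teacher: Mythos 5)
Your proof of \eqref{eq:supBBMII} matches the paper's, but your argument for \eqref{eq:estimateforb} has two genuine gaps. First, the reduction to the integrable case does not work as stated: \eqref{eq:supBBMII} is a one-sided bound (the normalized double integral is at most $2\inf_{b}\int_{\R^d}\dist_N(u,b)$), so it gives ``limit infinite $\Rightarrow$ infimum in \eqref{eq:integrabili} infinite'' but not the converse equivalence you invoke. After disposing of the case where the $\liminf$ of the normalized double integral is infinite, the remaining case is that this $\liminf$ is finite while nothing is yet known about integrability of $u$; that such finiteness forces integrability is precisely the nontrivial content of the proposition (and exactly how it is used in Proposition \ref{prop:w11casehalfspace}). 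Your minimizing-sequence-plus-Hopf--Rinow step cannot start there, since it needs some $b$ with $\int_{\R^d}\dist_N(u,b)<\infty$ to begin with. The paper produces such a point from the finiteness of the $\liminf$ alone, by finding $y_n\in\B^d(0,(1-\eta_n)R_n)$ with $\int_{\B^d(0,\eta_n R_n)}\dist_N(u(x),u(y_n))\d x$ controlled by the double integral at scale $R_n$ and showing $(u(y_n))_n$ is Cauchy, using only completeness of $N$.

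Second, even granting integrability and a minimizer $b_*$ (your compactness step is fine for that), your lower bound does not reach the stated equality. Writing $\Theta(R)$ for the left-hand side of \eqref{eq:supBBMII} and $I=\int_{\R^d}\dist_N(u(x),b_*)\d x$, your pointwise inequality
\[
\frac{1}{\Leb^d(\B^d(0,R))}\int_{\{y:\,|x-y|\le R\}}\dist_N(u(x),u(y))\d y\;\ge\;\dist_N(u(x),b_*)-\frac{I}{\Leb^d(\B^d(0,R))}
\]
cannot simply be integrated in $x$ over $\R^d$: the error term is constant in $x$, so its integral is infinite, and after truncating (keeping the positive part) and applying Fatou one only obtains $\liminf_{R\to\infty}\Theta(R)\ge I$, not $2I$. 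Nothing in your estimate produces the factor $2$, so \eqref{eq:estimateforb} does not follow. To recover it you must count the two regimes ``$x$ in the bulk, $y$ far'' and ``$y$ in the bulk, $x$ far'' separately and show their overlap is negligible: for instance, fix a large ball $A$, bound the double integral from below on $\bigl((A\times A^c)\cup(A^c\times A)\bigr)\cap\{|x-y|\le R\}$ by $2\bigl(\Leb^d(\B^d(0,R))-\Leb^d(A)\bigr)\int_A\dist_N(u,b_*)-2\Leb^d(A)\,I$, divide by $\Leb^d(\B^d(0,R))$, let $R\to\infty$ and then $A\nearrow\R^d$. This is the same mechanism as the paper's decomposition into $\B^d(0,(1-\eta_n)R_n)\times\B^d(0,\eta_nR_n)$ and its reflection, which handles the factor $2$ and, at the same time, the existence of $b_*$ without any prior integrability assumption.
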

This proves Theorem \ref{thm:BBMII}.
\begin{Proof}{Proposition}{prop:ballBBMII} We write $\B^d_R = \B^d(0,R)$.
We have, by the triangle inequality,
	\begin{equation}
	\label{eq:supBBM}
	\begin{split}
		&\iint_{\substack{\R^d \times \R^d\\ |x - y| \leq R}} \dist_N(u(x),u(y))\d x \d y \\
		&\qquad \leq \iint_{\substack{\R^d \times \R^d\\ |x - y| \leq R}}\dist_{N}(u(x),\pointN) \d x \d y + \iint_{\substack{\R^d \times \R^d\\ |x - y| \leq R}}\dist_{N}(u(y),\pointN) \d x \d y\\
		&\qquad = 2\Leb(\B^d_R) \int_{\R^d}\dist_{N}(u(x),\pointN) \d x
	\end{split}
	\end{equation}
	which proves \eqref{eq:supBBMII}.

 Set for \(R > 0\),
\[
\Theta(R)
\doteq
\iint\limits_{\substack{\R^d \times \R^d \\ |x - y| \leq R}} \frac{\dist_{N}(u(x), u(y))}{\Leb^d(\B^d(0,R))} \d x \d y.
\]
We assume without loss of generality that we have an increasing positive sequence \((R_n)_{n \in \mathbb{N}}\) such that
\[
\lim_{n \to \infty}
  \Theta(R_n)
  = \liminf_{R \to \infty}
  \Theta(R) < \infty
\]
and we set
\[
  \eta_n = \frac{1}{\max(2, R_n^{1/2})}.
\]
We have, since \(\eta_n \le 1/2\),
\[
 (\B^d_{(1 - \eta_n) R_n} \times \B^d_{\eta_n R_n})
 \cup  (\B^d_{\eta_n R_n} \times \B^d_{(1 - \eta_n) R_n})
 \subseteq \{(x, y) \in \R^d \times \R^d : \lvert x - y \rvert \le R_n\}
\]
and 
\[
 (\B^d_{(1 - \eta_n) R_n} \times \B^d_{\eta_n R_n})
 \cap  (\B^d_{\eta_n R_n} \times \B^d_{(1 - \eta_n) R_n})
 = \B^d_{\eta_n R_n}  \times \B^d_{\eta_n R_n},
\]
so that, by symmetry,
\[
\begin{split}
  &2 \int_{\B^d_{(1 - \eta_n) R_n}} \int_{\B^d_{\eta_n R_n}}
  \dist_{N} (u(x), u(y))\d x \d y\\
  &\qquad \le \int_{\B^d_{\eta_n R_n}} \int_{\B^d_{\eta_n R_n}}
  \dist_{N} (u(x), u(y))\d x \d y
  + \iint\limits_{\substack{\R^d \times \R^d \\ |x - y| \leq R_n}} \dist_{N}(u(x), u(y)) \d x \d y
\end{split}
\]
and thus
\[
\begin{split}
  &\frac{2}{\Leb^d (\B^d_{(1 - \eta_n) R_n})} \int_{\B^d_{(1 - \eta_n) R_n}} \int_{\B^d_{\eta_n R_n}}
  \dist_{N} (u(x), u(y))\d x \d y\\
  &\qquad \le \theta_n \doteq \frac{\Theta (R_n)
  + (2\eta_n)^d \Theta (2 \eta_n R_n)}{(1 - \eta_n)^d}.
  \end{split}
\]
Hence there exists \(y_n \in \B^d_{(1 - \eta_n) R_n}\) such that
\begin{equation}
\label{eq_Ooh9quiulaicheiL4eepail3}
\int_{\B^d_{\eta_n R_n}}
  \dist_{N} (u(x), u(y_n))\d x
  \le \frac{\theta_n}{2}.
  \end{equation}
If \(n\le m \), we have \(\eta_n R_n \le \eta_m R_m\).
Hence by the triangle inequality and by \eqref{eq_Ooh9quiulaicheiL4eepail3},
\[
\begin{split}
 \dist_{N} (u (y_n), u (y_m))
 &\le
 \int_{\B^d_{\eta_n R_n}} \frac{\dist_{N} (u (y_n), u (y_m))}{\Leb^d (\B^d_{\eta_n R_n})}\d x\\
 &\le
 \int_{\B^d_{\eta_n R_n}}
  \frac{\dist_{N} (u(x), u(y_n))}{\Leb^d (\B^d_{\eta_n R_n})}
  \d x +
  \int_{\B^d_{\eta_m R_m}}
  \frac
  {\dist_{N} (u(x), u(y_m))}
  {\Leb^d (\B^d_{\eta_n R_n})}\d x
  \\
 &\le \frac{\theta_n + \theta_m}{2 (\eta_n R_n)^d \Leb^d (\B^d_1)}.
\end{split}
\]
Since \(\eta_n R_n = \min(R_n/2, R_n^{1/2}) \to \infty\), the sequence
\((u(y_n))_{n}\) is a Cauchy sequence that converges to some \(b\) by completeness of \(N\). By Fatou's lemma and \eqref{eq_Ooh9quiulaicheiL4eepail3}
	\[
	\int_{\mathbb{R}^d} \dist_{N}(u(x), b) \, \d x
	\leq \lim_{n \to \infty} \int_{\B^d_{R_n}} \dist_{N}(u(x), u(y_n)) \, \d x
	\leq \lim_{n \to \infty} \frac{\theta_n}{2} = \lim_{n \to \infty} \frac{\Theta(R_n)}{2},
	\]
	which, combined with \eqref{eq:supBBMII}, gives \eqref{eq:estimateforb}.
\end{Proof}

\begin{remark}
It follows from Proposition~\ref{prop:ballBBMII} that
if $A \subset \R^d$ is measurable, then
	\[
	\Leb^d(A)
	= \frac{1}{2}
	\lim_{R \to +\infty}\frac{\Leb^{2d}(\{(x,y) \in A \times (\R^d \setminus A) : |x - y| \leq R\})}{\Leb^d(\B^d(0,R))}.
	\]
\end{remark}

\begin{proposition}
\label{prop_lebesgue}
If $u  : \R^d \to N$ is locally integrable, then for a.e. $x \in \R^d$ one has 
	\begin{equation}\label{eq:lebeguespoint}
		\lim_{r\searrow 0}\fint_{\B^d(x,r)}\dist_N(u(x),u(y))\d y = 0.
	\end{equation}
\end{proposition}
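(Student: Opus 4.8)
The plan is to reduce the statement to the classical Lebesgue differentiation theorem for real-valued functions by exploiting the separability of $N$. First I would fix a countable dense subset $\set{b_k : k \in \mathbb{N}}$ of $N$ and, for each $k$, consider $g_k \colon \R^d \to [0,\infty)$ defined by $g_k(y) = \dist_N(u(y), b_k)$. This function is measurable as the composition of the measurable map $u$ with the continuous map $\dist_N(\cdot, b_k)$. Moreover, since $u$ is locally integrable, on every ball $\B^d(0,m)$ there is a point $b \in N$ with $\int_{\B^d(0,m)} \dist_N(u(y), b) \d y < \infty$, and the triangle inequality $g_k(y) \le \dist_N(u(y), b) + \dist_N(b, b_k)$ shows that each $g_k$ belongs to $\mathrm{L}^1_{\mathrm{loc}}(\R^d)$.

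Next I would apply the classical Lebesgue differentiation theorem to each $g_k$: there is a set $E_k \subseteq \R^d$ with $\Leb^d(\R^d \setminus E_k) = 0$ such that for every $x \in E_k$,
\[
\lim_{r \searrow 0} \fint_{\B^d(x,r)} \lvert g_k(y) - g_k(x) \rvert \d y = 0 .
\]
Setting $E = \bigcap_{k \in \mathbb{N}} E_k$, the set $E$ still has full measure.

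Finally I would fix $x \in E$ and $\eps > 0$, and use density to pick $k$ with $\dist_N(u(x), b_k) < \eps$. For every $y \in \R^d$ the triangle inequality gives
\[
\dist_N(u(x), u(y)) \le \dist_N(u(x), b_k) + g_k(y) \le 2\dist_N(u(x), b_k) + \lvert g_k(y) - g_k(x) \rvert < 2\eps + \lvert g_k(y) - g_k(x) \rvert .
\]
Averaging over $\B^d(x,r)$ and letting $r \searrow 0$, the fact that $x \in E_k$ yields
\[
\limsup_{r \searrow 0} \fint_{\B^d(x,r)} \dist_N(u(x), u(y)) \d y \le 2\eps ,
\]
and since $\eps > 0$ is arbitrary the limit superior, hence the limit, equals zero for every $x \in E$, which proves the claim.

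There is essentially no serious obstacle here — the argument is the standard separable-range reduction. The only points needing a little care are the verification that each $g_k$ is genuinely locally integrable (handled by the triangle inequality together with the definition of local integrability of $u$, which only guarantees an integrable ``centre'' that may depend on the ball) and the measurability of $y \mapsto \dist_N(u(x), u(y))$; both are routine. One could alternatively phrase the reduction through the essential range of $u$, which is separable by separability of $N$, but working with a fixed countable dense subset of all of $N$ is cleaner.
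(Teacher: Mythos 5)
Your proof is correct and follows essentially the same route as the paper: both reduce to the classical Lebesgue differentiation theorem applied to the real-valued functions $y \mapsto \dist_N(u(y), b)$ with $b$ ranging over a countable dense subset of $N$, then conclude by the triangle inequality and density. The only cosmetic difference is that you use the Lebesgue-point form (averages of $\lvert g_k(y)-g_k(x)\rvert$) and intersect full-measure sets, while the paper uses convergence of averages to $f_b(x)$ and takes a union of null sets.
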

\begin{Proof}{Proposition}{prop_lebesgue}
Given \(b \in N\), we consider the function 
$f_b = \dist(u,b) \colon \R^d \to \R$.
Since 
\[
  \lvert f_b (x) - f_b (y) \rvert 
= |\dist_N (u(x), b) - \dist (u (y), b)\rvert \le \dist_N (u (x), u (y)),
\]
the function
$f_b$ is locally integrable. 
By the classical Lebesgue's differentiation theorem applied to $f_b$, there exists a set $E_b \subset \R^d$  such that $\Leb^d(E_b) = 0$ and such that for every $x \in \R^d\setminus E_b$,
	\begin{equation*}
		\lim_{r\searrow 0}\fint_{\B^d(x,r)}\dist_N(u(y),b)\d y = \dist_N(u(x),b), 
	\end{equation*}
	and thus 
	\begin{equation}
	\label{eq_eec4ahlai7ahpae4eehaem0G}
	\begin{split}
	\limsup_{r\searrow 0}\fint_{\B^d(x,r)}\dist_N(u(y),u(x))\d y 
	&
	\leq \lim_{r\searrow 0}\fint_{\B^d(x,r)}\dist_N(u(y),b) + \dist_N(u(x),b)\d y\\
	&= 2\dist_N(u(x),b).
	\end{split}
	\end{equation}	
	By our separability assumption on $N$, considering a countable dense set $B \subset N$, we write $E = \bigcup_{b \in B}E_b$ and observe that $\Leb^d(E) = 0$.
	Moreover, for every $b \in B$ and $x \in \R^d \setminus E$, \eqref{eq_eec4ahlai7ahpae4eehaem0G} holds and its right right-hand side can be made arbitrary small. 
	We have shown that for almost every $x \in \R^d$, \eqref{eq:lebeguespoint} holds.
\end{Proof}

\begin{proposition}
\label{proposition_Lp_to_Linfty_translation}
If $u : \R^d \to N$ is measurable,
then for every  $u : \R^d \to N$, $R > 0$ and $h \in \B^d(0,R)$, one has
\begin{equation}\label{eq:sdfghjkkjhgfds}
	\int_{\R^d}\dist_N(u(x),u(x + h))\d x \leq
	C_d \iint_{\substack{\R^d \times \R^d \\ |x - y| \leq R}}\frac{\dist_{N}(u(x),u(y))}{\Leb^d(\B^d(0,R))}\d x \d  y
\end{equation}
where $C_d>0$ only depends on $d$.
\end{proposition}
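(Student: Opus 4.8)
The plan is to recast the inequality, via a change of variables, as an estimate on the translation defect function
\[
g(z) \doteq \int_{\R^d} \dist_N(u(x), u(x+z)) \d x \in [0, \infty],
\]
and then to combine the triangle inequality with an averaging over a well-chosen ball.

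First I would dispose of the trivial case where the right-hand side of \eqref{eq:sdfghjkkjhgfds} is infinite, and assume from now on that it is finite. Since $(x, z) \mapsto \dist_N(u(x), u(x+z))$ is measurable, $g$ is measurable by Tonelli's theorem, and the substitution $y = x + z$ gives
\[
\iint\limits_{\substack{\R^d \times \R^d \\ |x - y| \leq R}} \dist_N(u(x), u(y)) \d x \d y = \int_{\B^d(0, R)} g(z) \d z;
\]
in particular $g \in \mathrm L^1(\B^d(0, R))$, and the claim reduces to $g(h) \leq C_d \Leb^d(\B^d(0,R))^{-1} \int_{\B^d(0,R)} g$ for every $h \in \B^d(0, R)$.

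Next, integrating the pointwise triangle inequality $\dist_N(u(x), u(x+h)) \leq \dist_N(u(x), u(x+z)) + \dist_N(u(x+z), u(x+h))$ in $x$ and translating $x \mapsto x + z$ in the second term yields the subadditivity-type bound $g(h) \leq g(z) + g(h - z)$ for every $z \in \R^d$. I would then average this over $z$ in a set $A$ chosen so that $A \subseteq \B^d(0, R)$, so that $A$ is invariant under the reflection $z \mapsto h - z$, and so that $\Leb^d(A)$ is comparable to $\Leb^d(\B^d(0, R))$; the natural choice is $A = \B^d(h/2, R/2)$, for which one checks, using $|h| < R$, that $h - A = A \subseteq \B^d(0, R)$ and $\Leb^d(A) = 2^{-d} \Leb^d(\B^d(0, R))$. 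Averaging $g(h) \leq g(z) + g(h-z)$ over $z \in A$ and substituting $w = h - z$ in the second term gives
\[
g(h) = \fint_A g(h) \d z \leq \fint_A g(z) \d z + \fint_A g(w) \d w \leq \frac{2}{\Leb^d(A)} \int_{\B^d(0, R)} g,
\]
so that \eqref{eq:sdfghjkkjhgfds} holds with $C_d = 2^{d+1}$.

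There is no real obstacle here: the one point requiring mild care is that $g$ may a priori take the value $+\infty$, which is why I first reduce to the case where the right-hand side is finite, so that $g \in \mathrm L^1(\B^d(0,R))$ and the averages above are meaningful and finite; the geometric verification that $A = \B^d(h/2, R/2)$ sits inside $\B^d(0, R)$ with the claimed measure is the only other thing to check.
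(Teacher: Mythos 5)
Your proof is correct, and it takes a slightly different route from the paper's. The paper first proves a pointwise estimate (Lemma~\ref{lemma_L1_to_Linfty}): for $|x-y|\le R$ one averages the triangle inequality over the lens $\B^d(x,R)\cap\B^d(y,R)$ to get $\dist_N(u(x),u(y))\le A_d\bigl(\fint_{\B^d(x,R)}\dist_N(u(x),u(z))\d z+\fint_{\B^d(y,R)}\dist_N(u(y),u(z))\d z\bigr)$, and then obtains the proposition by taking $y=x+h$ and integrating in $x$; that lemma is reused later (in Proposition~\ref{prop:ghjklkjhgfd}), which is part of why it is isolated. You instead work directly at the integrated level: you introduce the translation defect $g(z)=\int_{\R^d}\dist_N(u(x),u(x+z))\d x$, identify the right-hand side of \eqref{eq:sdfghjkkjhgfds} with $\int_{\{|z|\le R\}}g$, derive the subadditivity $g(h)\le g(z)+g(h-z)$ from the same triangle inequality plus translation invariance, and average over the reflection-symmetric ball $\B^d(h/2,R/2)\subset\B^d(0,R)$. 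The underlying mechanism (triangle inequality through an intermediate point, then averaging over a region of volume comparable to $\Leb^d(\B^d(0,R))$) is the same — your intermediate points $x+z$, $z\in\B^d(h/2,R/2)$, in fact range over a ball contained in the paper's lens — but your packaging is self-contained, avoids the lens-intersection constant $A_d$, and yields the explicit constant $C_d=2^{d+1}$; the paper's packaging buys a pointwise lemma it can recycle. The points you flag (measurability of $g$ via Tonelli, the trivial infinite case, $h-A=A\subseteq\B^d(0,R)$ and $\Leb^d(A)=2^{-d}\Leb^d(\B^d(0,R))$) are exactly the ones needing checking, and they all go through.
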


\begin{lemma}
\label{lemma_L1_to_Linfty}
If \(x, y \in \R^d\) and \(\lvert x - y \rvert \le R\),
then
\[
 \dist_N (u (x), u (y))
 \le A_d \Bigl( \fint_{\B^d (x, R)} \dist_N (u (x), u (z))\d z + \fint_{\B^d (y, R)} \dist_N (u (x), u (z))\d z  \Bigr),
\]
where \(A_d > 0\) only depends on \(d\).
\end{lemma}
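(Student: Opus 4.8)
The plan is to play the triangle inequality against the substantial overlap of the two balls $\B^d(x,R)$ and $\B^d(y,R)$.

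\emph{Step 1: the balls overlap on a definite fraction of their volume.} Since $\lvert x - y\rvert \le R$, the midpoint $m = (x+y)/2$ satisfies $\lvert m - x\rvert = \lvert m - y\rvert = \lvert x - y\rvert/2 \le R/2$, so that $\B^d(m, R/2) \subseteq \B^d(x,R) \cap \B^d(y,R)$ and therefore
\[
\Leb^d\bigl( \B^d(x,R) \cap \B^d(y,R) \bigr) \ge \Leb^d\bigl( \B^d(m, R/2) \bigr) = 2^{-d}\, \Leb^d\bigl( \B^d(0,R) \bigr).
\]
Writing $E = \B^d(x,R) \cap \B^d(y,R)$, it follows that for every nonnegative measurable function $g$ one has $\fint_E g \le 2^d \fint_{\B^d(x,R)} g$ and $\fint_E g \le 2^d \fint_{\B^d(y,R)} g$, since $\int_E g$ is dominated by the integral of $g$ over either of the two balls while $\Leb^d(E) \ge 2^{-d} \Leb^d(\B^d(0,R))$.

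\emph{Step 2: triangle inequality and averaging.} For every $z \in E$,
\[
\dist_N(u(x), u(y)) \le \dist_N(u(x), u(z)) + \dist_N(u(z), u(y)).
\]
Averaging this inequality in $z$ over $E$, and then estimating the first resulting average through Step~1 with the ball $\B^d(x,R)$ and the second through Step~1 with the ball $\B^d(y,R)$, one obtains
\[
\dist_N(u(x), u(y)) \le 2^d \fint_{\B^d(x,R)} \dist_N(u(x), u(z)) \d z + 2^d \fint_{\B^d(y,R)} \dist_N(u(z), u(y)) \d z ,
\]
which is the asserted estimate with $A_d = 2^d$.

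\emph{On the difficulty.} Nothing here is genuinely delicate: the one point worth isolating is the elementary volume count of Step~1 — two balls of radius $R$ whose centers are at distance at most $R$ contain a common ball of radius $R/2$, hence overlap on at least a fraction $2^{-d}$ of their volume — after which a single application of the triangle inequality closes the argument. The admissible constant $A_d$ is governed solely by that overlap fraction, so that any dimensional constant (not only $2^d$) will do.
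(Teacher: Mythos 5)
Your proof is correct and follows essentially the same route as the paper: triangle inequality averaged over the intersection $\B^d(x,R)\cap\B^d(y,R)$, followed by comparing the average over the intersection with the averages over the full balls. The only difference is cosmetic — the paper defines $A_d$ abstractly as the supremum of the volume ratios $|\B^d(x,R)|/|\B^d(x,R)\cap\B^d(y,R)|$ and merely asserts its finiteness, whereas your midpoint-ball containment makes it explicit ($A_d = 2^d$); note also that your conclusion, with $\dist_N(u(z),u(y))$ in the second average, is the intended (and correctly proved) form of the estimate, the paper's statement containing a typo there.
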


\begin{Proof}{Lemma}{lemma_L1_to_Linfty}
By the triangle inequality, we have succesively
\begin{equation*}
\begin{split}
 &\dist_N (u (x), u (y))\\
 &\qquad = \fint_{\B^d (x, R) \cap \B^d (y, R)}
 \dist_N (u (x), u (y)) \d z \\
 &\qquad \le \fint_{\B^d (x, R) \cap \B^d (y, R)}
 \dist_N (u (x), u (z)) \d z  + \fint_{\B^d (x, R) \cap \B^d (y, R)}
 \dist_N (u (y), u (z)) \d z \\
 &\qquad \le A_d  \Bigl( \fint_{\B^d (x, R)} \dist_N (u (x), u (z))\d z + \fint_{\B^d (y, R)} \dist_N (u (x), u (z))\d z  \Bigr),
 \end{split}
\end{equation*}
where
\[
	A_d \doteq \sup\left\{ \frac{|\B^d(x;R)|}{|\B^d(x;R)\cap \B^d(y;R)|} : x,y \in \R^d, R > 0, |x - y| \leq R\right\}\\
\]
is finite.
\end{Proof}

\begin{Proof}{Proposition}{proposition_Lp_to_Linfty_translation}
Applying Lemma~\ref{lemma_L1_to_Linfty} and integrating over $x \in \R^d$, we obtain
\begin{equation*}
\begin{split}
		\int_{\R^d}\dist_N(u(x),u(x + h)) \d x
		&\le  \frac{A_d}{\Leb(\B^d(0; R))}
		\int_{\substack{\R^d \times \B^d\\ \vert x - z\vert \le R}}
		\dist_N (u (x), u (z))\d x \d z\\
		& \qquad
		+ \frac{A_d}{\Leb(\B^d(0; R))} \int_{\substack{\R^d \times \R^d\\
		\lvert x + h - z \rvert \le R}}
		\dist_N (u (x + h ), u (z))\d x \d z\\
		&= \frac{A_d}{\Leb(\B^d (0; R)}
		\int_{\substack{\R^d \times \B^d\\ \vert x - z\vert \le R}}
		\dist_N (u (x), u (z))\d x \d z
		\end{split}
\end{equation*}
which is \eqref{eq:sdfghjkkjhgfds}.
\end{Proof}

\begin{proposition}\label{prop:ghjklkjhgfd}
If $u : \R^d \to N$ is measurable, if for some \(R > 0\),
\begin{equation}
\label{eq_UghoShihee4AhphaeK6aeFuo}
\iint_{\substack{\R^d \times \R^d \\|x - y|\leq R}}\frac{\dist_N(u(x),u(y))}{\Leb^d (\B^d (0, R))} \d x \d y < \infty
\end{equation}
then
\begin{equation}
\label{eq_oaquohh6ohVeij4Ootaewook}
 \lim_{r\searrow 0}\iint_{\substack{\R^d \times \R^d \\|x - y|\leq r}}\frac{\dist_N(u(x),u(y))}{\Leb^d (\B^d (0, r))} \d x \d y= 0.
\end{equation}
\end{proposition}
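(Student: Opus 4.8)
The plan is to reduce the statement to the integrability hypothesis \eqref{eq:integrabili} via Proposition~\ref{prop:ballBBMII}, and then to prove the limit \eqref{eq_oaquohh6ohVeij4Ootaewook} by a truncation-and-Lebesgue-point argument. First I would observe that, by \eqref{eq:estimateforb} in Proposition~\ref{prop:ballBBMII}, the hypothesis \eqref{eq_UghoShihee4AhphaeK6aeFuo} — which controls a fixed-scale double integral for \emph{some} $R>0$, hence for the liminf as $R\to\infty$ only up to a change of scale — actually forces $u$ to be integrable in the sense of \eqref{eq:integrabili}: there exists $b_*\in N$ with $\int_{\R^d}\dist_N(u(x),b_*)\d x<\infty$. (One has to be a little careful: \eqref{eq_UghoShihee4AhphaeK6aeFuo} is a single-scale statement, but monotonicity in $R$ of $\Leb^{2d}(\{|x-y|\le R\}\cap\operatorname{spt}\text{-type sets})$ together with \eqref{eq:supBBMII} shows the finiteness at one scale is what matters for a \emph{local} conclusion; alternatively one works directly with the fixed $R$ throughout.) In any case, by Proposition~\ref{prop_lebesgue}, $u$ is locally integrable and $\Leb^d$-a.e.\ point is a Lebesgue point in the sense of \eqref{eq:lebeguespoint}.

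Next, fix $\eps>0$ and use the finiteness of the integrand in \eqref{eq_UghoShihee4AhphaeK6aeFuo} together with dominated convergence: writing
\[
 g(r)\doteq\iint_{\substack{\R^d\times\R^d\\|x-y|\le r}}\frac{\dist_N(u(x),u(y))}{\Leb^d(\B^d(0,r))}\d x\d y
 =\fint_{\B^d(0,r)}\int_{\R^d}\dist_N(u(x),u(x+h))\d x\,\d h,
\]
we have rewritten $g(r)$ as an average over $h\in\B^d(0,r)$ of the translation-distance functional $T(h)\doteq\int_{\R^d}\dist_N(u(x),u(x+h))\d x$. By Proposition~\ref{proposition_Lp_to_Linfty_translation}, $T(h)\le C_d\, g(R)<\infty$ for all $|h|\le R$, so $T$ is a bounded function on $\B^d(0,R)$; and by Proposition~\ref{prop_lebesgue} (Lebesgue points) together with Fatou's lemma, $\liminf_{h\to 0}T(h)=0$. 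The remaining issue is that $g(r)$ is an \emph{average} of $T$ over a shrinking ball, so one needs $T(h)\to 0$ as $h\to 0$, not merely along a sequence; this is the point where I expect the main work.

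To close this gap I would argue as follows. Since $u$ is locally integrable, apply Proposition~\ref{prop_lebesgue} to get a null set $E$ off which \eqref{eq:lebeguespoint} holds. For $x\notin E$ and $h$ small,
\[
 \dist_N(u(x),u(x+h))\le \dist_N(u(x),u(y))+\dist_N(u(y),u(x+h))
\]
averaged over $y\in\B^d(x,|h|)$ gives pointwise control, but to get the \emph{integral} $T(h)$ small I would instead use absolute continuity of the integral: given $\eps>0$, the integrable function $x\mapsto\operatorname*{ess\,sup}_{|h|\le R}\dist_N(u(x),u(x+h))$ (bounded in $L^1$ by $\eps$-approximation from Proposition~\ref{proposition_Lp_to_Linfty_translation} after first cutting to a large ball $\B^d(0,\rho)$ where $\int_{\R^d\setminus\B^d(0,\rho)}\dist_N(u(x),b_*)\d x<\eps$) lets us split $T(h)$ into a contribution from $\B^d(0,\rho)$ and a tail bounded by $2\eps$; on $\B^d(0,\rho)$, continuity of translation in $L^1$ — which holds for $\R^\nu$-valued maps and transfers to $N$-valued maps via the embedding $N\hookrightarrow\R^\nu$ and $\dist_N\le C|{\cdot}-{\cdot}|$ on compact pieces, using that the essential range meets a large ball of $\R^\nu$ up to an $\eps$-tail — gives $\int_{\B^d(0,\rho)}\dist_N(u(x),u(x+h))\d x\to0$ as $h\to0$. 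Hence $\limsup_{h\to0}T(h)\le C\eps$, so $T(h)\to0$, and therefore $g(r)\to0$ as $r\searrow0$, which is \eqref{eq_oaquohh6ohVeij4Ootaewook}. The main obstacle is precisely this last step: upgrading the a.e.\ Lebesgue-point information to genuine $L^1$-continuity of translation for manifold-valued maps without a linear structure, which forces one to route through the ambient $\R^\nu$ and handle the non-compactness of $N$ by an $L^1$ tail estimate coming from integrability.
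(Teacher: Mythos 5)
Your reduction to global integrability is the weak point, and it is a genuine gap rather than a technicality. The hypothesis \eqref{eq_UghoShihee4AhphaeK6aeFuo} is a single-scale condition, and it does \emph{not} imply that $u$ is integrable in the sense of \eqref{eq:integrabili}: Proposition~\ref{prop:ballBBMII} produces $b_*$ only when $\liminf_{R\to\infty}$ of the normalized double integral is finite, and finiteness at one scale $R$ only propagates to larger scales with a constant growing linearly in the scale (cf.\ Lemma~\ref{lemma:scaling}), so the limit may well be $+\infty$. Concretely, take $d=1$, $N=\R$ and $u$ a monotone function with $u(-\infty)=0$, $u(+\infty)=1$ (or, for a general $N$, a map depending only on $x_1$ running along a geodesic between two points): then $\int_\R \lvert u(x)-u(x+h)\rvert\,\d x=\lvert h\rvert$, so \eqref{eq_UghoShihee4AhphaeK6aeFuo} holds for every $R$, yet no $b_*$ with $\int_{\R}\dist_N(u,b_*)<\infty$ exists. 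Since your truncation step hinges on choosing $\rho$ with $\int_{\R^d\setminus\B^d(0,\rho)}\dist_N(u(x),b_*)\,\d x<\eps$, the argument collapses exactly on such maps, for which the proposition nevertheless must (and does) hold. Two further steps are also unsupported as written: the integrability of $x\mapsto\operatorname*{ess\,sup}_{|h|\le R}\dist_N(u(x),u(x+h))$ does not follow from Proposition~\ref{proposition_Lp_to_Linfty_translation}, which controls the integral in $x$ for each fixed $h$ but not the supremum over $h$; and the comparison $\dist_N(p,q)\le C\lvert p-q\rvert$ for $p,q$ in a compact piece of an embedded submanifold can fail (two strands of $N$ may be Euclidean-close but geodesically far), so routing translation-continuity through the ambient $\R^\nu$ needs more care than invoking the embedding.

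For comparison, the paper avoids all of this by working at the fixed scale $R$ only: it sets $f(x)=\fint_{\B^d(x,R)}\dist_N(u(x),u(z))\,\d z$, which lies in $\mathrm L^1(\R^d)$ directly by \eqref{eq_UghoShihee4AhphaeK6aeFuo}, and uses Lemma~\ref{lemma_L1_to_Linfty} to dominate $\fint_{\B^d(x,r)}\dist_N(u(x),u(y))\,\d y$ by $A_d f(x)+A_d\fint_{\B^d(x,r)}f$. The left-hand side tends to $0$ almost everywhere by the Lebesgue-point property (Proposition~\ref{prop_lebesgue}), while the dominating sequence converges in $\mathrm L^1$ as $r\searrow 0$, so a generalized dominated convergence argument yields \eqref{eq_oaquohh6ohVeij4Ootaewook} with no global integrability and no tail estimate. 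If you want to keep your strategy of proving $T(h)=\int_{\R^d}\dist_N(u(x),u(x+h))\,\d x\to 0$ and then averaging in $h$, you would still need a replacement for the tail cut that uses only the single-scale hypothesis --- essentially reproving the domination above --- at which point the paper's argument is the shorter path.
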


\begin{Proof}{Proposition}{prop:ghjklkjhgfd}
Given \(R > 0\), we define the function
\[
 f (x) = \fint_{\substack{\B^d (x, R)}} \dist_N(u(x),u(z)) \d z.
\]
In view of \eqref{eq_UghoShihee4AhphaeK6aeFuo}, we have
\[
 \int_{\R^d} f < \infty
\]
whereas by Lemma~\ref{lemma_L1_to_Linfty}, for every \(x, y \in \R^d\) such that \(\vert x - y \vert \le R\),
\[
  \dist_N (u (x), u (y))
  \le A_d (f (x) + f (y)).
\]
Averaging the latter inequality over $x \in \B^d(x,r)$, we get
\begin{equation}
\label{eq_cisuyoh7AhKe0pesoosi2zoc}
 \fint_{\B^d (x, r)} \dist_N (u (x), u (y))\d y
 \le A_d f (x) + A_d \fint_{\B^d (x, r)} f.
\end{equation}
Since the left-hand side of \eqref{eq_cisuyoh7AhKe0pesoosi2zoc} converges almost-everywhere and its right-hand side converges in \(\mathrm L^1(\R^d)\) as \(r \to 0\) by approximation by averages in \(\mathrm L^1\), \eqref{eq_oaquohh6ohVeij4Ootaewook} follows from Lebesgue's dominated convergence.
\end{Proof}

\begin{remark}
	From \eqref{eq:sdfghjkkjhgfds}, we see that the finitness \eqref{eq_UghoShihee4AhphaeK6aeFuo} implies that 
	\[
	\sup_{r \in  (0,R)} \ \iint_{\substack{\R^d \times \R^d \\ |x - y| \leq r}}\frac{\dist_{N}(u(x),u(y))}{\Leb^d (\B^d (0, r))}
	\d x \d  y \leq
	C_d  \iint_{\substack{\R^d \times \R^d \\ |x - y| \leq R}}\frac{\dist_{N}(u(x),u(y))}{\Leb^d (\B^d (0, R))}
	\d x \d  y.
	\]
\end{remark}

We finally record the following easy-to-state consequence of Propositions \ref{prop:ghjklkjhgfd} and \ref{proposition_Lp_to_Linfty_translation}:
\begin{corollary}
	If $u : \R^d \to N$ is measurable, then
	\begin{equation}
		\lim_{h \to 0}\int_{\R^d}\dist_N(u(x),u(x + h))\d x
		=
		\lim_{r\searrow 0}\iint_{\substack{\R^d \times \R^d \\|x - y|\leq r}}\frac{\dist_N(u(x),u(y))}{\Leb^d (\B^d (0, r))} \d x \d y
		\in \{0,+\infty\}.
	\end{equation}
\end{corollary}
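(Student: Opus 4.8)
The plan is to transfer everything onto the one‑variable functional $T(h)\doteq\int_{\R^d}\dist_N(u(x),u(x+h))\,\d x$, $h\in\R^d$. First I would record the identity coming from Tonelli's theorem and the substitution $y=x+h$: for every $r>0$,
\[
S(r)\doteq\iint_{\substack{\R^d\times\R^d\\|x-y|\le r}}\frac{\dist_N(u(x),u(y))}{\Leb^d(\B^d(0,r))}\,\d x\,\d y=\fint_{\B^d(0,r)}T(h)\,\d h .
\]
Since $\{0\}$ is negligible, $\inf_{0<|h|<r}T(h)\le S(r)\le\sup_{0<|h|\le r}T(h)$, so letting $r\searrow0$ yields $\liminf_{h\to0}T(h)\le\liminf_{r\searrow0}S(r)\le\limsup_{r\searrow0}S(r)\le\limsup_{h\to0}T(h)$, while Proposition~\ref{proposition_Lp_to_Linfty_translation}, which says $T(h)\le C_d\,S(r)$ whenever $0<|h|<r$, gives in the limit $\limsup_{h\to0}T(h)\le C_d\,\liminf_{r\searrow0}S(r)$. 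Thus all four quantities are finite together and comparable up to $C_d$.

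Next I would pin down $\lim_{r\searrow0}S(r)$. If $S(R)<\infty$ for some $R>0$, Proposition~\ref{prop:ghjklkjhgfd} forces $\lim_{r\searrow0}S(r)=0$; otherwise $S\equiv+\infty$ and the limit is trivially $+\infty$. In either case $\lim_{r\searrow0}S(r)$ exists and lies in $\{0,+\infty\}$, which is the middle member of the asserted identity. When this limit is $0$, the chain of inequalities above immediately gives $\limsup_{h\to0}T(h)=0$, hence $\lim_{h\to0}T(h)=0$, and we are done.

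There remains the divergent case $\lim_{r\searrow0}S(r)=+\infty$, where the chain only yields $\limsup_{h\to0}T(h)=+\infty$, and the real work is to upgrade this to $\liminf_{h\to0}T(h)=+\infty$. Here I would use the structural properties of $T$: it is measurable in $h$ (Tonelli) and subadditive, $T(h+h')\le T(h)+T(h')$, by the triangle inequality and translation invariance of Lebesgue measure, with $T(-h)=T(h)$. If $\liminf_{h\to0}T(h)$ were finite, some sublevel set $E_k=\{h:T(h)\le k\}$ would contain points arbitrarily close to $0$; one then applies the Steinhaus difference theorem to $E_k$ to conclude that $E_k-E_k$, hence $E_{2k}$ (using $E_k=-E_k$ and subadditivity), contains a ball $\B^d(0,\rho)$, so that $S(\rho)\le 2k<\infty$, contradicting $\lim_{r\searrow0}S(r)=+\infty$.

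I expect the main obstacle to be precisely the hypothesis needed to run this Steinhaus argument, namely that a sublevel set of $T$ accumulating at $0$ may be taken of positive measure: for a completely arbitrary measurable $u$ the set $\{T<\infty\}$ can be a Lebesgue‑null subgroup (for instance when $u$ depends on only some of the coordinates, so that $T\equiv+\infty$ off a proper subspace while $\liminf_{h\to0}T(h)=0$ along it). Under the integrability assumption on $u$ this degeneracy is excluded, and the reduction to a positive‑measure sublevel set can be carried out with the same averaging device $f(x)=\fint_{\B^d(x,R)}\dist_N(u(x),z)\,\d z$ already used in Propositions~\ref{prop_lebesgue} and \ref{prop:ghjklkjhgfd}.
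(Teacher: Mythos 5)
Your treatment of the ``finite'' alternative is correct and is precisely the paper's intended derivation: the averaging identity $S(r)=\fint_{\B^d(0,r)}T(h)\,\d h$ (with your notation $T(h)=\int_{\R^d}\dist_N(u(x),u(x+h))\,\d x$), Proposition~\ref{prop:ghjklkjhgfd} to get $S(r)\to 0$, and Proposition~\ref{proposition_Lp_to_Linfty_translation} to transfer this to $T(h)\to 0$; the paper offers nothing beyond citing these two propositions. Your Steinhaus mechanism (measurability, symmetry and subadditivity of $T$, so that a positive-measure sublevel set $E_k$ yields $\B^d(0,\rho)\subset E_k-E_k\subset E_{2k}$ and hence $S(\rho)\le 2k$) is also sound as far as it goes.

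The genuine gap is the one you flag yourself, and it cannot be closed: nothing guarantees a sublevel set of positive measure near $0$, and your ``obstacle'' is in fact a counterexample to the literal statement. For $d\ge 2$, $N=\R$ and $u(x)=\one_{[0,1]}(x_1)$ one has $T(h)=+\infty$ whenever $h_1\neq 0$ and $T(h)=0$ when $h_1=0$, so $S(r)\equiv+\infty$ while $\liminf_{h\to 0}T(h)=0$ and $\limsup_{h\to 0}T(h)=+\infty$; thus $\lim_{h\to 0}T(h)$ does not exist although $\lim_{r\searrow 0}S(r)=+\infty$. What your inequalities do prove is the corollary with $\limsup_{h\to 0}$ in place of $\lim_{h\to 0}$ (since $S(r)\le\sup_{0<|h|\le r}T(h)$, divergence of $S$ forces $\limsup_{h\to 0}T(h)=+\infty$), or the stated equality under the integrability condition \eqref{eq:integrabili} --- but in that case $S(r)\le 2\inf_{b\in N}\int_{\R^d}\dist_N(u(x),b)\,\d x<\infty$ by Proposition~\ref{prop:ballBBMII}, the divergent alternative is vacuous, and your closing suggestion to run the Steinhaus argument under integrability addresses a case that no longer occurs. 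So the shortfall in your divergent case reflects an over-statement of the corollary for merely measurable $u$ rather than a missing idea on your side; the paper's own two-proposition justification does not cover that case either.
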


\section{Trace inequalities}\label{sec:trace-inequalities}

In this section, we write a proof of Theorem \ref{thm:yetanothertraceineqRd} as well as a variant on balls (Proposition \ref{prop:yetanothertraceineq}) and when the domain is a manifold, see Proposition \ref{prop:manifoldtrace}.
\begin{proposition}\label{prop:yetanothertraceineq}  Fix $R>0$. If $U \in \dotW^{1,1}(\B^d(0,R) \times (0,1),N)$, the trace $u = \tr_{\B^d(0,R) \{0\}}U  : \B^d(0,R) \times \set{0} \to N$ is locally integrable and  satisfies for each $r \in (0,R)$
	\begin{equation}\label{eq:theconcusionofthetracelemma}
		\iint\limits_{\substack{\B^d(0,R) \times \B^d(0,R) \\ |x - y| \leq r}} \frac{\dist_N(u(x), u(y))}{r^d}   \d x \d y \leq C_d\iint_{\B^d(0,R) \times (0,r)}|\D U|
	\end{equation}
	and
	\begin{equation}\label{eq:theconjunctionofthetracelemma}
		\iint\limits_{\substack{\B^d(0,R) \times (\B^d(0,R) \times (0, R)) \\ |x - y| \leq r}}
		\frac{\dist_N(u(x), U(y))}{r^{d + 1}}   \d x \d y \leq C_d\iint_{\B^d(0,R) \times (0,r)}|\D U|
	\end{equation}
	where $C_d > 0$ only only depends on $d$.
\end{proposition}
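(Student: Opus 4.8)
The plan is to establish the trace inequality on a cylinder by a fairly standard argument comparing boundary values of $U$ on the slices $\B^d(0,R) \times \{t\}$ as $t$ ranges over $(0,r)$, and then passing to the boundary trace via a limiting/averaging procedure.

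\medskip

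\noindent\textbf{Step 1: Reduce to comparing two slices.} For a.e. $0 < s < t < 1$, the fundamental theorem of calculus along vertical segments gives, for a.e. $x \in \B^d(0,R)$,
\[
 \dist_N(U(x,s), U(x,t)) \le \int_s^t |\D U (x,\tau)| \d\tau,
\]
since $\tau \mapsto U(x,\tau)$ is absolutely continuous into $N \subset \R^\nu$ for a.e.\ $x$ and $\dist_N$ is comparable to the ambient distance along short curves — in fact one directly gets the length of the curve $\tau \mapsto U(x,\tau)$, which bounds $\dist_N$. I would then want to upgrade this to a comparison at horizontally displaced points: for $x, y \in \B^d(0,R)$ with $|x - y| \le r$ and $0 < s < t < r$, I combine the vertical estimate with a horizontal one. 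The horizontal comparison on a fixed slice $\B^d(0,R) \times \{t\}$ is exactly the content of Lemma~\ref{lemma_L1_to_Linfty}-type reasoning once we know $u(\cdot,t) := U(\cdot, t)$ has the relevant averaged-oscillation control; alternatively, and more cleanly, I would foliate by affine segments joining $(x,0)$-ish points to $(y,0)$-ish points through the bulk and integrate $|\D U|$ along them.

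\medskip

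\noindent\textbf{Step 2: The geometric averaging over a cone/biconvex region.} The cleanest route to \eqref{eq:theconcusionofthetracelemma} is: for $x,y \in \B^d(0,R)$ with $|x-y|\le r$, and for each $z$ in a suitable ``bridging'' set $Z_{x,y} \subset \B^d(0,R)\times(0,r)$ of measure $\gtrsim r^{d+1}$ (e.g.\ a double cone with apexes near $x$ and $y$ and base at height $\sim r$), bound $\dist_N(U(x,0^+), U(y,0^+))$ by the sum of the $\D U$-integrals along the two broken segments $[(x,0),z]$ and $[z,(y,0)]$, then average in $z$. After averaging and using that each bulk point $(\xi,\tau)$ is hit by a controlled number of such segments (a Jacobian/change-of-variables computation, uniform in $x,y$ because $|x-y|\le r$ and everything is scale-$r$), one obtains
\[
 \dist_N(u(x),u(y)) \le C_d \, r \iint_{\B^d(0,R)\times(0,r)} \frac{|\D U(\xi,\tau)|}{r^{d}} \, \omega_{x,y}(\xi,\tau)\d\xi\d\tau,
\]
with weights $\omega_{x,y}$ whose integral against $\d x\d y$ over $\{|x-y|\le r\}$ is $\lesssim r^{d}$ pointwise in $(\xi,\tau)$. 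Integrating in $(x,y)$ then yields \eqref{eq:theconcusionofthetracelemma}. For \eqref{eq:theconjunctionofthetracelemma} the argument is the same but simpler: only one broken segment $[(x,0),(y',t')]$ to the bulk point $(y',t') = y$ is needed, giving the extra power $r^{d+1}$ in the denominator directly from the measure of the admissible cone of bulk points.

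\medskip

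\noindent\textbf{Step 3: Existence and local integrability of the trace.} I must justify that $\tr U$ exists and that $U(\cdot,t)$ converges to it. The slice estimate from Step 1, integrated, shows $t \mapsto U(\cdot,t)$ is Cauchy in $L^1_{\mathrm{loc}}(\B^d(0,R),\R^\nu)$ as $t \searrow 0$ (because $\int_0^1\int_{\B^d(0,R')}|\D U| < \infty$ for $R' < R$), so a limit $u$ exists in $L^1_{\mathrm{loc}}$; since $N$ is closed in $\R^\nu$, $u$ takes values in $N$ a.e.; and $u$ agrees with the usual trace. Local integrability of $u$ then follows, e.g.\ by combining $\int_{\B^d(0,R')}\dist_N(U(x,t), b)\d x < \infty$ for fixed small $t$ and $b \in N$ with the $L^1$ convergence. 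Finally the displayed inequalities for $u$ follow from those for $U(\cdot, t)$ by Fatou's lemma as $t, s \searrow 0$.

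\medskip

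\noindent\textbf{Main obstacle.} The technical heart — and the step I expect to be fiddly — is Step 2: setting up the family of bridging regions $Z_{x,y}$ (staying inside $\B^d(0,R)\times(0,r)$, which forces care near $\partial \B^d(0,R)$, so one likely works with the ball $\B^d(0,R)$ but only claims the estimate with $u$ restricted appropriately, or uses that the left-hand integrals only see pairs with $|x-y|\le r$), and carrying out the change-of-variables bookkeeping so that the multiplicity of each bulk point is $O(1)$ after the correct power of $r$ is extracted, uniformly in $r \in (0,R)$. Everything else is routine Fubini, Fatou, and absolute continuity on lines.
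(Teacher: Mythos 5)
Your overall scheme --- fundamental-theorem estimates of $\dist_N$ along segments joining a boundary point to bulk points, averaging over a bridging region of measure $\sim r^{d+1}$, a change-of-variables/multiplicity count, and Fatou plus $\mathrm L^1_{\mathrm{loc}}$ convergence of slices for the trace itself --- is essentially the paper's proof; the only organisational difference is that the paper first proves the mixed estimate \eqref{eq:theconjunctionofthetracelemma} and then deduces \eqref{eq:theconcusionofthetracelemma} by the triangle inequality through bulk points $z$ with $|x-z|,|y-z|<r$ together with the lower bound \eqref{eq_ip2aeyahPieb6Thah6zeegee} on the measure of that bridging set, whereas you propose to get the boundary--boundary estimate directly from a double-cone average (which does work: the cone aperture forces the intermediate point $(\xi,\tau)$ to satisfy $|\xi-x|\lesssim\tau$, and that is what makes the Jacobian bookkeeping close).

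There is, however, a genuine gap in your treatment of \eqref{eq:theconjunctionofthetracelemma}. You claim it is ``the same but simpler: only one segment $[(x,0),y]$ is needed.'' But here there is no averaging freedom: every pair $(x,y)$ with $|x-y|\le r$, $y\in\B^d\times(0,r)$, must be estimated, and the straight segment from $(x,0)$ to $y$ is nearly horizontal when $y$ has small height but horizontal distance from $x$ of order $r$. Carrying out the change of variables $z=(1-t)x+ty$ for \emph{all} such $y$ gives, for each bulk point $z=(z',z_{d+1})$, a weight of order
\[
\int_{\{|x-z'|<r\}} \frac{|x-z|}{\max(|x-z'|,z_{d+1})^{d+1}}\,\mathrm d x \;\approx\; 1+\log\frac{r}{z_{d+1}},
\]
so the right-hand side you obtain is $C_d\, r^{d+1}\int |\D U(z)|\bigl(1+\log (r/z_{d+1})\bigr)\d z$, not $C_d\, r^{d+1}\int|\D U|$; since $|\D U|$ may concentrate arbitrarily close to $\B^d\times\{0\}$ for a $\dotW^{1,1}$ map, the logarithm cannot be absorbed into a dimensional constant. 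This is exactly the point the paper's proof is built to avoid: the direct segment estimate is used only for bulk endpoints with $y_{d+1}\in(r/2,r)$ (then $t\gtrsim z_{d+1}/r$ and the constraint $|x-z'|\le 2z_{d+1}$ appears in \eqref{eq:todiscussdim}, killing the logarithm), while bulk points of height below $r/2$ are first compared with $U(y',y_{d+1}+r/2)$ by a one-dimensional vertical estimate \eqref{eq_xeizoh8Uyudo0aihoo8eo1yo}. Your argument can be repaired either this way or by routing each pair $(x,y)$ through a bridging point at height $\sim r$ (a broken path, as in your boundary--boundary step), but as written the ``one straight segment'' step does not yield \eqref{eq:theconjunctionofthetracelemma}.
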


In the proof of Proposition \ref{prop:yetanothertraceineq}, we use the fact that if $C \subset \R^d$ is a convex and $U \in \dotW^{1,1}_{\mathrm{loc}}(C, N)$ then for almost every $x, y \in C$, 
\begin{equation}\label{eq:fundamentalthm}
	\dist_N(U(x),U(y)) \leq |x - y|\int_0^{1}|\D U((1 - t)x + ty))|\d t
\end{equation}
and the right-hand side is finite.

\begin{Proof}{Proposition}{prop:yetanothertraceineq} By scaling, we assume $R = 1$ and set $\B^d = \B^d(0,1)$ the unit ball centered at the origin.
For almost every \(x \in \B^d \simeq \B^d \times \{0\}\) and \(y \in \B^d \times (0, r)\), we have, by \eqref{eq:fundamentalthm},
\begin{equation*}
	\dist_N(u(x), U(y))\leq
	|x - y| \int_0^1|\D U((1 - t)x + t y)|\, \mathrm{d}t.
\end{equation*}
and thus, with \(y = (y', y_{d + 1})\),
	\begin{multline}
	\label{eq_EitooKoh9siedootaenoog9W}
		 \iint_{\B^d \times (\B^d \times (r/2, r))} 	\dist_N(u(x), U(y)) \one_{\{|x - y'|< r\}}  \, \mathrm{d}x\, \mathrm{d}z\\
		\leq  \int_0^1 \iint_{\B^d \times (\B^d\times (0, r))}|x - y|  \one_{\{|x - y'|< r\}}|\D U((1 - t)y + t x)| \, \mathrm{d}x\, \mathrm{d} y\, \mathrm{d}t.
	\end{multline}
Changing the variable $y$ to $z = (1 - t)x + t y$
so that the Jacobian determinant of the deformation is equal to $t^{d + 1}$, \(x - z = t(x - y)\) and \(z_{d + 1} = t y_{d + 1}\), the right-hand side of \eqref{eq_EitooKoh9siedootaenoog9W} is controlled by
	\begin{equation}
	 \label{eq:todiscussdim}
	 \begin{split}
	 &\int_0^1 \iint_{\B^d \times (\B^d \times (r/2, r))} |x - z|
		\one_{\{\max(|x - z'|, z_{d + 1}) < t r < 2 z_{d + 1}\}}|\D U(z)|\frac{\d z \d w}{t^{d + 2}}  \d t\\
&\qquad \le
		\int_{\B^d \times (r/2,r)}
		|\D U(z)| \int_{\B^{d}} \int_{\max(\vert x - z\vert, z_{d + 1}) /r}^{\infty}
		\frac{\vert x - z \vert \one_{\{|x - z'| < r\}}}{t^{d + 2}} \d t \d z\\
		&\qquad \le  \frac{r^{d + 1}}{d + 1}
		\int_{\B^d \times (r/2,r)}
		|\D U(z)| \int_{\B^{d}}
		\frac{|x - z| \one_{\{|x - z'| \leq 2z_{d + 1} \}}}{\max(\vert x - z' \vert, z_{d + 1})^{d + 1}} \d t \d z\\
		&\qquad \le 2^{d + \frac{1}{2}} \vert \B^d\vert
		\int_{\B^d \times (0,1)}
		|\D U(z)| \d z.
		\end{split}
	\end{equation}

Next, we have
\begin{equation}
\label{eq_xeizoh8Uyudo0aihoo8eo1yo}
	\begin{split}
	&\iint_{\B^d \times (\B^d \times (0, r/2))} 	\dist_N(U(y), U(y', y_{d + 1}+r/2)) \one_{\{|x - y'|< r\}}  \, \mathrm{d}x\, \mathrm{d}z\\
   &\qquad \le 	\iint_{\B^d \times (\B^d \times (0, r/2))}
   \int_0^{r/2} \vert D U (y', y_{d + 1} + t) \one_{\{|x - y'|< r\}} \vert \d t\\
   &\qquad < \frac{r^{d + 1} \vert \B^d\vert}{2}
   \int_{\B^d \times (0, r)} \vert DU \vert.
   \end{split}
\end{equation}
Combining \eqref{eq_EitooKoh9siedootaenoog9W}, \eqref{eq:todiscussdim} and \eqref{eq_xeizoh8Uyudo0aihoo8eo1yo}, we get
\[
\iint_{\B^d \times (\B^d \times (0, r))} 	\dist_N(u(x), U(y)) \one_{\{|x - y'|< r\}}  \, \mathrm{d}x\, \mathrm{d}z
\le
\bigl(2^{d + 1 + \frac{1}{2}} + \tfrac{1}{2}\bigr)
\vert \B^d \vert
\int_{\B^d \times (0, r)} \vert DU \vert,
\]
from which \eqref{eq:theconjunctionofthetracelemma} follows.

Finally, we have by the triangle inequality
\begin{equation}
\label{eq_kies1weemaigau5si2Iv4loh}
\begin{split}
 &\iint_{\B^d \times \B^d \times (\B^d \times (0, r))} 	\dist_N(u(x), u(y)) \one_{\{|x - z|< r, |y - z| < r\}}  \, \mathrm{d}x\, \mathrm{d}z\\
&\le
 \iint_{\B^d \times (\B^d \times (0, r))} 	\dist_N(u(x), U(z)) \one_{\{|x - z|<r\}}  \, \mathrm{d}x\, \mathrm{d}z\\
&\qquad \qquad  +  \iint_{\B^d \times (\B^d \times (0, r))} 	\dist_N(u(y), U(z)) \one_{\{|y - z|< r\}}  \, \mathrm{d}x\, \mathrm{d} y\\
&\le \bigl(2^{d + 2 + \frac{1}{2}} + 1\bigr)
\vert \B^d \vert
\int_{\B^d \times (0, r)} \vert \D U \vert.
 \end{split}
\end{equation}
We note that there exists $B_d>0$ depending only on $d$ such that
\begin{equation}
\label{eq_ip2aeyahPieb6Thah6zeegee}
 \int_{\B^d \times (0, r)}
 \one_{\{|x - z|< r, |y - z| < r\}}
 \d y \d z
 \ge B_d r^{d + 1} \one_{\{\vert x - y \vert < r\}}.
\end{equation}
Finally, \eqref{eq:theconcusionofthetracelemma} follows from \eqref{eq_kies1weemaigau5si2Iv4loh} and \eqref{eq_ip2aeyahPieb6Thah6zeegee}.
		\end{Proof}

We record that taking $R \to +\infty$ one obtains a similar result on $\R^d$ (instead of $\B^d(0,R)$) holding for all $r>0$; this result is written as Theorem \ref{thm:yetanothertraceineqRd}.

\begin{proposition}\label{prop:manifoldtrace} Let $M$ a Riemannian manifold with compact boundary $\partial M$. There exists $R_0 >0$ and  such that if $U \in \dotW^{1,1}(M,N)$, the trace $u = \tr_{\R^d \times \{0\}}U :\partial M \to N$ is integrable and  satisfies for each $r \in (0,R_0)$,
	\begin{equation*}
		\iint\limits_{\substack{\partial M \times \partial M \\ \dist_{\partial M}(x, y) \leq r}}
		\frac{\dist_N(u(x), u(y))}{r^{\dim(\partial M)}} \d x \d y \leq C_M \iint\limits_{M \cap \{\dist_M(x, \partial M) \leq r\}} |\mathrm{D} U|
	\end{equation*}
and
	\begin{equation*}
		 \iint\limits_{\substack{\partial M \times M \\ \dist_{\partial M}(x, y) \leq r}} \frac{\dist_N(u(x), U(y))}{r^{\dim(\partial M) + 1}} \d x \d y \leq C_M \iint\limits_{M \cap \{\dist_M(x, \partial M) \leq r\}} |\mathrm{D} U|
	\end{equation*}
	where $C_M>0$ only depends on $M$.
\end{proposition}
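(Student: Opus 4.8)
The strategy is to reduce the manifold statement to the already-proven Euclidean ball statement (Proposition~\ref{prop:yetanothertraceineq}) by a finite atlas of boundary charts together with a collar (tubular) neighborhood of $\partial M$. First I would invoke the compactness of $\partial M$ to produce a finite collection of charts $\varphi_i \colon V_i \subset \partial M \to \B^d(0,2) \subset \R^d$ (with $d = \dim \partial M$) whose domains $V_i$ cover $\partial M$, together with a uniform collar: there is $R_0 > 0$ and a bi-Lipschitz diffeomorphism $\Psi \colon \partial M \times [0, R_0) \to \{x \in M : \dist_M(x, \partial M) < R_0\}$ (the normal exponential map, which is a diffeomorphism onto a neighborhood of the compact set $\partial M$, rescaled so the collar has uniform width). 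Composing, each chart extends to a bi-Lipschitz map $\Phi_i \colon \B^d(0,2) \times (0,R_0) \to M$ onto an open subset of the collar, with Lipschitz constants of $\Phi_i$ and $\Phi_i^{-1}$ bounded by a constant depending only on $M$ (this is where compactness of $\partial M$ and of the chart domains is used to get finitely many charts and uniform constants).

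Next, for each $i$, the pullback $U \circ \Phi_i$ lies in $\dotW^{1,1}(\B^d(0,2) \times (0, R_0), N)$, and the chain rule plus the bi-Lipschitz bounds give $\int |\D(U \circ \Phi_i)| \le C_M \int_{\Phi_i(\cdot)} |\D U|$ over corresponding regions; moreover the trace of $U \circ \Phi_i$ on $\B^d(0,2) \times \{0\}$ equals $u \circ \varphi_i^{-1}$ up to the identification. Applying Proposition~\ref{prop:yetanothertraceineq} (with a fixed radius like $R = 2$, restricted to $r \in (0, R_0)$ after possibly shrinking $R_0$) on each chart yields, for each $i$ and each $r \in (0,R_0)$,
\[
\iint\limits_{\substack{\varphi_i(V_i') \times \varphi_i(V_i') \\ |x-y| \le c_M r}} \frac{\dist_N(u\circ\varphi_i^{-1}(x), u\circ\varphi_i^{-1}(y))}{r^{d}}\d x\d y
\le C_M \iint_{M \cap \{\dist_M(\cdot,\partial M) < r\}} |\D U|,
\]
and likewise the mixed estimate against $U(y)$, where $V_i' \Subset V_i$ are chosen so that $\{V_i'\}$ still covers $\partial M$. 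Transferring back through $\varphi_i$ (again using bi-Lipschitz bounds to compare $\dist_{\partial M}$ with the Euclidean distance in the chart, and the Jacobians to compare the Riemannian surface measure with Lebesgue measure) converts each chart estimate into the desired form localized to $V_i' \times V_i'$.

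The final step is to patch the local estimates into the global one. Summing over the finitely many charts handles all pairs $(x,y) \in \partial M \times \partial M$ with $\dist_{\partial M}(x,y) \le r$ that happen to lie in a common $V_i'$; by a Lebesgue-number argument, after shrinking $R_0$ once more, \emph{every} such pair with $\dist_{\partial M}(x,y) \le r \le R_0$ lies in some $V_i'$, so the union of the local integration domains covers the global one. The number of overlaps is bounded by the (finite) number of charts, so summing costs only a constant depending on $M$; the right-hand sides are all bounded by the same collar integral $\iint_{M \cap \{\dist_M(\cdot,\partial M) \le r\}} |\D U|$, so summing them also only costs a factor equal to the number of charts. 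The same argument applies verbatim to the mixed $\dist_N(u(x), U(y))$ estimate, using that $\Phi_i$ maps $\B^d(0,2)\times(0,r)$ into $M \cap \{\dist_M(\cdot,\partial M) < C_M r\}$. Local integrability of $u$ follows from the finite sum of the chart-wise local integrability given by Proposition~\ref{prop:yetanothertraceineq}, and then Proposition~\ref{prop:ghjklkjhgfd}'s type of argument (or directly) upgrades it to integrability using compactness of $\partial M$.

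I expect the main obstacle to be purely bookkeeping rather than conceptual: keeping the various metric distortions straight when passing between $\dist_{\partial M}$, the Euclidean distance in each chart, and $\dist_M$ in the collar, and in particular matching the condition ``$\dist_{\partial M}(x,y) \le r$'' with the chart condition ``$|x-y| \le r$'' uniformly (so that the radius $r$ appearing in the exponents $r^{\dim \partial M}$ and $r^{\dim\partial M + 1}$ is only rescaled by a controlled constant, which can be absorbed since these are homogeneous of fixed degree in $r$). A secondary subtlety is ensuring $R_0$ is chosen small enough to simultaneously serve as the collar width, the radius up to which Proposition~\ref{prop:yetanothertraceineq} applies in each chart, and a Lebesgue number for the cover $\{V_i'\}$; since all these depend only on $M$, a single final choice of $R_0 = R_0(M)$ suffices.
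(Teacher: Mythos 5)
Your proposal is correct and follows essentially the same route as the paper: the paper's (very terse) proof likewise reduces to Proposition~\ref{prop:yetanothertraceineq} via a finite covering of $\partial M$ by geodesic balls obtained from compactness, using that each such ball with its collar is a Lipschitz deformation of $\B^{\dim(\partial M)}(0,1)\times(0,1)$, and then sums the local estimates. Your chart-plus-collar formulation, Lebesgue-number step, and bookkeeping of the metric distortions are just a more detailed write-up of the same argument.
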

\begin{proof}
This is a consequence of Proposition~\ref{prop:yetanothertraceineq} on a finite subcovering of $\partial M \times \partial M \subset \bigcup_{x \in \partial M}\B^M(x,r) \times \B^M(x,r)$ by geodesic balls $\B^M(x,r)$, $x \in \partial M$ and the fact that $\B^M(x,r)$ and $\B^{\dim(\partial M)}(0,1) \times (0,1)$ are Lipschitz de\-for\-ma\-tions of each other.
\end{proof}

\section{Joining two maps by a map of bounded variation}\label{sec:joining-two-maps-by-a-map-of-bounded-variation}

In this section we provide the first step of the proof of the extensions results: Proposition \ref{prop:smalllemma_varconnect2map} joins two integrable mappings by a map of bounded variation.

Fix $L >0$ and $k \in \mathbb Z$ and let us denote by $\mathcal Q_{k} = \{Q(a,2^{-k}L): a \in 2^{1-k}L\mathbb Z^d\}$ the \emph{dyadic cubes} of radius $2^{-k}L$.  We will crucially use that $\Leb^d(Q) = (2^{1 - k}L)^d$ which we will sometimes write $|Q|$.

\begin{proposition}\label{prop:smalllemma_varconnect2map}  
	If $u_0,u_1: \R^d \to N$ satisfy for $i \in \set{0,1}$ 
	\begin{equation}\label{eq:thelimitassumption}
		\inf_{r \in \R}\iint_{\substack{\R^d \times \R^d \\|x - y|\leq r}}\frac{\dist_N(u_i(x),u_i(y))}{r^d} \d x \d y < \infty
	\end{equation} 
	and if
	\begin{equation}
	\label{eq:intecondition}
	\int_{\R^d} \dist_N(u_0(x),u_1(x)) \d x < \infty,
\end{equation}
	then,  for each $L >0$,  there exists $U  \in \dot\BV(\R^d \times (-L,L), N)$ such that  \begin{equation}\label{eq:tr_in_BV}
		\Tr_{\R^d \times \{-L\}}U = u_0 \quad \text{ and } \quad \Tr_{\R^d \times \{L\}}U = u_1.
	\end{equation}
	Moreover,  there exists an increasing sequence \( (k_{n})_{n \in \mathbb{N}} \subset \mathbb{N}\) depending on $u_i, i \in \set{0,1}$ such that $k_{0} = 0$  and such that setting
	\begin{align}
		\label{eq_ohphae9Ahhuloo5Ooka6ho7y}
		I_{n, 0} = (- (1 - 2^{-k_{n+1}})L, -(1 - 2^{-k_{n}})L) 
		\text{ and }
		I_{n,1} = ((1 - 2^{-k_{n+1}})L, (1 - 2^{-k_{n}})L),
	\end{align}
	and	we have for each \( t \in I_{n,i} \) and each \( x \in Q \in \mathcal{Q}_{k_{n}} \), \(U(x, t) = u_i(x_Q)\)	for some Lebesgue point \( x_Q \in Q \) of \( u_i \).
	Moreover, if $t \in I_{n, i}$,  \begin{equation}\label{eq:tracelimit}
				\int_{\R^d}\dist_N(u_i(x), \tr_{\R^d \times \{t\}}U(x))\d x \leq \frac{2^{-n}}{L^d}\sum_{i \in \set{0,1}} \iint_{\substack{\R^d \times \R^d \\|x - y|\leq L}}	\dist_N(u_i(x),u_i(y)) \d x \d y.
	\end{equation}
	If we write $\mathrm{J}_U$ for the union of the faces $F$ of the rectangular cuboïds $Q \times I_{n}$, $Q \in \mathcal Q_{k_{n,i}}$ and denote by $\Tr_{\mathrm{J}_U}^+U$ and $\Tr_{\mathrm{J}_U}^-U$ the value of $U$ on the two cuboïds sharing a same face $F$,  we have
	\begin{equation}
	\label{eq:ineq_smallllemma_varconnect2map}
	\begin{split}
		&\int_{\mathrm{J}_U \cap \R^d\times (-L,L)} \dist_N(\Tr_{\mathrm{J}_U}^+U(x),\Tr_{\mathrm{J}_U}^-U(x))\d x\\
		&\qquad \leq  \int_{\R^d} \dist_N(u_0(x),u_1(x)) \d x
		+\frac{C_d}{L^d}\sum_{i \in \set{0,1}} \iint_{\substack{\R^d \times \R^d \\|x - y|\leq L}}	\dist_N(u_i(x),u_i(y)) \d x \d y,
		\end{split}
	\end{equation}
	where $C_d >0$ only depends on $d$.
\end{proposition}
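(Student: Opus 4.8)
### Proof plan for Proposition \ref{prop:smalllemma_varconnect2map}

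\textbf{Overall strategy.} The plan is to build $U$ as a piecewise-constant map on a family of rectangular cuboids of the form $Q \times I_{n,i}$ with $Q \in \mathcal{Q}_{k_n}$, where the ``scales'' $k_n$ are chosen increasingly so that on each slab $\R^d \times I_{n,i}$ the map is constant on each dyadic cube of $\mathcal{Q}_{k_n}$ and takes as value a Lebesgue point value of $u_i$. The extension meets $u_0$ on $\{-L\}$ and $u_1$ on $\{L\}$ in the trace sense because the cubes shrink as $t \to \mp L$; the BV bound \eqref{eq:ineq_smallllemma_varconnect2map} is obtained by summing the jumps across the two kinds of faces: the ``lateral'' faces inside a single slab (jumps between neighbouring dyadic cubes at the \emph{same} scale), the ``horizontal'' faces between consecutive slabs $I_{n,i}$ and $I_{n+1,i}$ (jumps between a cube of $\mathcal{Q}_{k_n}$ and its dyadic children in $\mathcal{Q}_{k_{n+1}}$), and the single central face $\R^d \times \{0\}$ where $u_0$ meets $u_1$, which contributes exactly $\int_{\R^d}\dist_N(u_0,u_1)$.

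\textbf{Key steps, in order.} First I would fix the Lebesgue point values: by Proposition~\ref{prop_lebesgue}, for a.e. $x$ the averages $\fint_{\B^d(x,r)}\dist_N(u_i(x),u_i(y))\,\d y \to 0$, so each cube $Q$ of each scale contains a Lebesgue point $x_Q$ of $u_i$; one must be slightly careful to choose $x_Q$ to be a \emph{common} good point when comparing a parent cube and its children, which is possible since the bad set has measure zero. Second, I would record the basic jump estimates at a fixed scale $k$: using Lemma~\ref{lemma_L1_to_Linfty} (or directly the local averaging of Proposition~\ref{prop_lebesgue}) one bounds, for neighbouring cubes $Q, Q' \in \mathcal{Q}_k$ sharing a face of $(d-1)$-measure $(2^{1-k}L)^{d-1}$,
\[
 (2^{1-k}L)^{d-1}\,\dist_N(u_i(x_Q), u_i(x_{Q'})) \le \frac{C_d}{(2^{-k}L)^d}\iint_{\substack{Q^* \times Q^*\\ |x-y| \le c\,2^{-k}L}} \dist_N(u_i(x),u_i(y))\,\d x\,\d y,
\]
where $Q^*$ is a bounded dilate of $Q$, and similarly for the parent/child comparison across a horizontal face. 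Summing over all faces at scale $k$ and over $i$ gives a bound of the form $C_d\,(2^{k}/L^d) \iint_{|x-y|\le c\,2^{-k}L}\dist_N(u_i(x),u_i(y))\,\d x\,\d y$ per slab, which by \eqref{eq:thelimitassumption} and the Remark after Proposition~\ref{prop:ghjklkjhgfd} tends to $0$ as $k \to \infty$ and is in any case $\le C_d\,2^{-n}\cdot L^{-d}\iint_{|x-y|\le L}\dist_N(u_i(x),u_i(y))$ once $k_n$ is chosen large enough (that is where the freedom to pick $(k_n)$ increasing and $k_0 = 0$ is used). Third, I would choose the sequence $(k_n)$ recursively so that this per-slab total (lateral jumps in $I_{n,i}$ plus horizontal jumps between $I_{n,i}$ and $I_{n+1,i}$) is $\le C_d\,2^{-n}L^{-d}\sum_i\iint_{|x-y|\le L}\dist_N(u_i(x),u_i(y))$; summing the geometric series over $n$ and adding the central face term $\int\dist_N(u_0,u_1)$ yields \eqref{eq:ineq_smallllemma_varconnect2map}, and the same per-slab smallness after division by the slab's cross-section gives \eqref{eq:tracelimit}. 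Fourth, I would verify $U \in \dot\BV(\R^d\times(-L,L),N)$: $U$ is locally constant off the countable union of faces $\mathrm{J}_U$, each face has locally finite $\Hau^d$-measure, the total jump mass is finite by \eqref{eq:ineq_smallllemma_varconnect2map}, hence the distributional derivative (after composing with the embedding $N \hookrightarrow \R^\nu$, using that $\dist_N$ dominates the Euclidean distance up to constants, or rather directly that $\Hau^d\llcorner\mathrm{J}_U$-finiteness of the $\R^\nu$-valued jump follows) is a finite measure; local finiteness near $\{t = \pm L\}$ needs the cross-sections $(2^{-k_n}L)$ to shrink fast enough, which the recursive choice guarantees. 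Finally, the trace identities \eqref{eq:tr_in_BV}: near $t = -L$, for $t \in I_{n,0}$ the slices $U(\cdot,t)$ are the scale-$k_n$ ``sampling'' of $u_0$, and by \eqref{eq:tracelimit} these converge to $u_0$ in $\mathrm{L}^1_{\mathrm{loc}}$ as $t \to -L$ (i.e. $n \to \infty$); a standard BV-trace argument (the vertical variation between $\R^d\times\{t\}$ and $\R^d\times\{-L\}$ goes to $0$) then identifies $\Tr_{\R^d\times\{-L\}}U = u_0$, and symmetrically at $t = L$.

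\textbf{Main obstacle.} The delicate point is the bookkeeping that makes the geometric series converge: one needs, \emph{simultaneously}, (i) that choosing $k_{n+1}$ large forces both the lateral-jump total inside the new slab \emph{and} the horizontal-jump total across the parent/child interface to be $\le C_d 2^{-n}(\dots)$ — the horizontal term is the subtler one since it compares two different scales and must be controlled uniformly in how large $k_{n+1}$ is taken; (ii) that the interval lengths $|I_{n,i}| = (2^{-k_n} - 2^{-k_{n+1}})L$ still shrink geometrically so that the slabs exhaust $(-L,L)$ and $U$ is globally BV up to the boundary; and (iii) that the Lebesgue-point choices are consistent across scales. Each of these is individually routine given Lemma~\ref{lemma_L1_to_Linfty}, Proposition~\ref{prop_lebesgue} and the Remark after Proposition~\ref{prop:ghjklkjhgfd}, but arranging them in a single recursion — picking $k_{n+1}$ large enough as a function of everything chosen so far — is where the real care lies. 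The rest (the face-by-face jump estimates, the identification of the trace, the verification of the BV property) is a fairly mechanical dyadic computation.
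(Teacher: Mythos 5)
Your plan is essentially the paper's proof: the same piecewise-constant construction on dyadic cuboids $Q\times I_{n,i}$ with recursively chosen scales $k_n$, the same three-way jump accounting (the central face $\R^d\times\{0\}$ giving $\int_{\R^d}\dist_N(u_0,u_1)$, parent/child faces handled by the triangle inequality through $u_i$, lateral faces by a same-scale comparison summing to a normalized double integral at scale $2^{-k_n}L$ that is forced to be geometrically small by the choice of $k_n$), and the same trace identification via $\mathrm{L}^1$ convergence of the slices and continuity of the BV trace under translations. The one point to tighten is that a mere Lebesgue point $x_Q$ (Proposition~\ref{prop_lebesgue}) gives no control at the scale of $Q$, so your displayed face estimate does not follow for an arbitrary such point: you must select $x_Q$ by a Chebyshev/mean-value argument so that $\fint_Q\dist_N(u_i(x_Q),u_i(y))\,\d y\le\fint_Q\fint_Q\dist_N(u_i(x),u_i(y))\,\d x\,\d y$ (this is exactly the paper's choice \eqref{eq:medianinequality}, and it also renders the ``common good point for parent and children'' precaution unnecessary); with that selection your recursion and bookkeeping go through as sketched.
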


 The homogeneous space of manifold constrained mappings of bounded variation  $\BV(\R^d \times (-L,L), N)$ is defined through $N \subset \R^\nu$: 
 \begin{multline*}
 	\dot\BV(\R^d \times (-L,L), N) \\= \{u \in \dot\BV(\R^d \times (-L,L), \R^\nu): u(x) \in N \text{ for almost everywhere } x \in N\}.
 \end{multline*}
 We refer to \cite{ambrosio2000functions} for the definition of $\BV(\R^d \times (-L,L), \R^\nu)$; the corresponding homogeneous space $	\dot\BV(\R^d \times (-L,L), N)$ neglects the $\mathrm{L}^1$ part of the definition. One can show that $\mathrm{J}_U$ and $\Tr_{\mathrm{J}_U}^\pm$ in Proposition \ref{prop:smalllemma_varconnect2map} are respectively the jump set and the one sided traces of BV maps as defined in \cite[Theorem 3.77]{ambrosio2000functions}. The trace of mappings of bounded variations appearing in \eqref{eq:tr_in_BV} is defined in \cite{anzellotti1978funzioni}. For our purposes, one has by \eqref{eq:tracelimit}, for $i \in \set{0,1}$,
 \[
 		\lim_{n \to \infty}\sup_{t \in I_{i,n}}\int_{\R^d}\dist_N(u_i(x), \tr_{\R^d \times \{t\}}U(x))\d x = 0.
 \]
 The trace operator $\BV \to \mathrm{L}^1_{\mathrm{loc}}$ being continuous under translations, we deduce \eqref{eq:tr_in_BV}.

\begin{Proof}{Proposition}{prop:smalllemma_varconnect2map} Let $i \in \set{0,1}$. 
	For each $k \in \mathbb Z$, for each $Q \in \mathcal Q_k$, there exists a Lebesgue point $x_Q \in Q$ of $u_i$ such that
	\begin{equation}\label{eq:medianinequality}
		\fint_Q \dist_N(u_i(x_Q),u_i(y)) \d y \leq \fint_Q\fint_Q \dist_N(u_i(x),u_i(y))\d y \d x,
	\end{equation}
	where here and after for sets $\Omega$ of finite measure, we write
	\[
		\fint_{\Omega} f \d \mu  = \frac{1}{\mu(\Omega)}\int_{\Omega}f\d \mu,
	\] 
	the mean of $f : \Omega \to \R$ on $\Omega$.

	For each $k \in \mathbb Z$, we define for $x \in \R^d$,  
	\begin{align}\label{eq:choiceofuQ}
		\E_k (u_i)(x) &= \E_k(u_i)(Q) = u_i(x_Q) &
		&\text{ if }x \in Q \in \mathcal Q_k.
	\end{align}
	 We will use the second expression $\E_k(u_i)(Q) \in N$ to emphazise that $\E_k(u_i)(x)$ is constant on the cube $Q$. 
	
	By our assumption \eqref{eq:thelimitassumption} and Proposition \ref{prop:ghjklkjhgfd}
	\begin{equation}
	\label{eq:limittozero}
	\begin{split}
		&\varlimsup_{k \to +\infty}\int_{\R^d}\dist_N(\E_{k} (u_i)(x), u_i(x))  \d x 		\\
		&\qquad \leq 	\varlimsup_{k \to +\infty}\sum_{Q \in \mathcal Q_{k}}\int_Q\fint_Q \dist_N(u_i(x),u_i(y))\d y \d x\\
		&\qquad \leq \lim_{k \to \infty}\frac{1}{2^{(1-k)d}L^d}\iint_{\substack{\R^d \times \R^d \\ |x - y|_\infty \leq 2^{1 -k}L}}\dist_N(u_i(x),u_i(y)) \d x \d y= 0.
		\end{split}
	\end{equation}
	For later we set $k_{0} = 0$ and define
	\begin{equation}\label{eq:choiceofalpha}
		\Gamma \doteq 
		 \sum_{i \in \set{0,1}}\frac{1}{L^d}\iint_{\substack{\R^d \times \R^d \\|x - y|\leq L}}	\dist_N(u_i(x),u_i(y)) \d x \d y.
	\end{equation}
	In view of \eqref{eq:limittozero}, there exists, a increasing sequence of natural numbers $(k_{n})_n$ so that for each $n \in \mathbb N_*$,  $k \geq k_{n}$ and \(i \in \{0, 1\}\),
	\begin{equation}\label{eq:alphaover2ell}
		\int_{\mathbb R^d} \dist_N(\E_{k} (u_i)(x),u_i(x)) \d x \leq \frac{\Gamma}{2^{n}}.
	\end{equation}
	By  \eqref{eq:thelimitassumption} and a change of variable,
	\begin{equation}\label{eq:meanstroanslation}
		\lim_{k \to +\infty}\int_{\R^d}\fint_{|h|_\infty \leq 2^{1 - k}L}\dist_N(u_i(x), u_i(x + h)) \d h \d x  = 0.
	\end{equation}	
	By \eqref{eq:meanstroanslation}, we may further assume that there exists a subsequence $(k_{n,i})_n$ satisfying for each $n \in \mathbb N_*$
	\begin{equation}\label{eq:distbtwcubes}
		\int_{\mathbb R^d} \fint_{|h|_\infty \leq 2^{1-k_{n}}L}\dist_N(u_i(x),u_i(x +  h)) \d h \d x \leq \frac{\Gamma}{2^{k_{n-1}}}
	\end{equation}
	by mathematical induction.

We set
\[ 
	U(x,t) \doteq \E_{k_{n}} (u_i)(x) \; \text{ if } \;i\in \set{0,1},\; t \in I_{n,i},\; x \in \mathcal Q_{k_{n}},
\]
where \(I_{n, i}\) is defined in \eqref{eq_ohphae9Ahhuloo5Ooka6ho7y}.
We obtain a map \( U: \mathbb{R}^d \times (-L,L) \to N \) which is constant on rectangular cuboïds; the manifold constraint is satisfied as  $\E_{k_{n}}( u_i)(x) \in N$ by \eqref{eq:medianinequality}. The map is in fact only defined almost everywhere as no value was given on the interfaces of the cuboïds. By the choice of $\Gamma > 0$ it proves \eqref{eq:tracelimit}.
We claim that $\Tr_{\R^d \times \set{-L}}U = u_0$ and $\Tr_{\R^d \times \{L\}}U = u_1$. The situation is symmetric so we only consider the case $i =0$ in $t = R$.
The trace operator $\Tr_{\R^d \times \set{0}}: \mathrm{BV}_{\mathrm{loc}}(\R^d \times (-L,L)) \to \mathrm L^1(\R^d)$ is continuous with respect with translations. We have $\tr_{\R^d \times \{\tau\}}U = \E_{k_{n,1}}(u_0)$ if $\tau \in I_{n,0}$. By \eqref{eq:limittozero}, we deduce that $\Tr_{\R^d \times \{L\}}U = u_1$ and \eqref{eq:tracelimit}.
Therefore it only remains to prove \eqref{eq:ineq_smallllemma_varconnect2map}.

	Since $U$ is constant on rectangular cuboïds by construction and jumps on the interfaces of the rectangles, we deduce that $U \in \mathrm{BV}_{\mathrm{loc}}(\R^d \times (-L,L))$.
	The measure $|\mathrm \D U|$ is absolutely continuous with respect to $\Hau^{d}$ and we will denote its support $\mathrm{J}_U$.
	We therefore deduce that the left-hand side in the Proposition \ref{prop:smalllemma_varconnect2map}\eqref{eq:ineq_smallllemma_varconnect2map},
	is controlled by the three following contributions:
	We will estimate \ref{item:umeetsv}, \ref{item:parallelu} and \ref{item:perpu}
	separately.

	\begin{enumerate}[label=(\roman*)]
	\item \label{item:umeetsv}  Contributions at the interface $\R^d \times \set{0}$ in $t = 0$
	\begin{equation}\sum_{\substack{Q \in \mathcal Q_{k_{0}}}}\Hau^d(Q)\dist_N(\E_{k_{0}}(u_0)(Q), \E_{k_{0}}(u_1)(Q)).
	\end{equation}
	
	\item \label{item:parallelu} \emph{Parallel} contributions to $\R^d\times \{0\}$ of each $u_i$, $i \in \set{0,1}$, (except the jump on $\R^d \times \{0\}$, see \ref{item:umeetsv})
	\begin{equation}
		\sum_{i \in \set{0,1}}\sum_{n \in \mathbb N}\sum_{Q \in \mathcal Q_{k_{n}} }\sum_{\substack{Q' \in \mathcal Q_{k_{n+1}} \\ Q' \subset Q} }\Hau^d(Q') \dist_N(\E_{k_{n}} (u_i)(Q), \E_{k_{n+1}} (u_i)(Q')).\label{eq:parallelu}
	\end{equation}
	\item \label{item:perpu} \emph{Perpendicular} contributions to $\R^d\times \{0\}$
	\begin{equation}
		\sum_{i \in \set{0,1}}\sum_{n \in \mathbb N}\sum_{Q \in \mathcal Q_{k_{n}} }\sum_{\substack{Q' \in \mathcal Q_{k_{n}}\\\cap \mathrm{neigh}(Q)}}\Hau^d(\bar Q \cap \bar Q' \times I_{k_n}) \dist_N(\E_{k_{n}}( u_i)(Q'), \E_{k_{n}} (u_i)(Q)) \label{eq:perpu}
	\end{equation} 
	where the fourth sum runs  over the $2d$ neighbours  of the dyadic cube $Q \in \mathcal Q_{k_n}$.
	We record that the measure of a intersecting lateral face of two cuboïds, denoted $\bar Q \cap \bar Q'\times I_{k_n}$, is $\Hau^d(\bar Q \cap \bar Q'\times I_{k_n}) = 2^{(1 - k_{n})(d - 1)}(2^{-k_{n+1}}-{2^{-k_{n}}})L^d$.
\end{enumerate}

The analysis of \ref{item:umeetsv} relies on the triangle inequality and \eqref{eq:medianinequality}. Indeed, the sum  over cubes $\mathcal Q_{k_{0}}$ of  \ref{item:umeetsv} equals
\begin{equation}
\begin{split}
		&\int_{\R^d}\dist_N(\E_{k_{0}}(u_0)(x), \E_{k_{0}}(u_1)(x))\d x  \\
	&\qquad \leq  \int_{\R^d}\dist_N(\E_{k_{0}}(u_1) (x), u_0(x))\d x\\
	&\qquad \qquad +\int_{\R^d}\dist_N(u_0(x),u_1(x))\d x + \int_{\R^d}\dist_N(u_1(x),\E_{k_{1,0}}(u_1)(x))\d x\\
	&\qquad \leq  \int_{\R^d}\dist_N(u_0(x),u_1(x))\d x\\
	&\qquad \qquad + \sum_{i \in \set{0,1}}\sum_{Q \in \mathcal Q_{k_{0}}}|Q|^{-1}\iint_{Q\times Q}\dist_N(u_i(x),u_i(y)) \d x \d y\\
	&\qquad \leq \int_{\R^d}\dist_N(u_0(x),u_1(x))\d x\\
	&\qquad \qquad + L^{-d}\sum_{i \in \set{0,1}} 2^{d(k_{0}-1)} \iint_{|x - y|_\infty\leq 2L}	\dist_N(u_i(x),u_i(y)) \d x \d y. \label{eq:icontrol3}
\end{split}
\end{equation}
	
	For \ref{item:parallelu}, We have that by the triangle inequality, for each $i \in \set{0,1}$, $n \in \mathbb N$,
	\begin{equation*}
	\begin{split}
		&\sum_{Q \in \mathcal Q_{k_{n}} }\sum_{\substack{Q' \in \mathcal Q_{k_{n+1}} \\ Q' \subset Q} }\int_{Q'} \dist_N(\E_{k_{n}} (u_i)(x), \E_{k_{n+1}} (u_i)(x)) \d x \\
		&\;\leq \notag\sum_{Q \in \mathcal Q_{k_{n}} }\sum_{\substack{Q' \in \mathcal Q_{k_{n+1}} \\ Q' \subset Q}}\Bigg[\int_{Q'} \dist_N(\E_{k_{n}}(u_i)(x), u_i(x)) \d x + \int_{Q'} \dist_N(u_i(x),\E_{k_{n+1,i}} (u_i)(x))\d x \Bigg].
	\end{split}
	\end{equation*}
	We will sum over all the subcubes $Q' \subset Q$, noting that $Q = \bigcup \{Q' : Q' \in \mathcal Q_{k_{n+1}}, Q' \subset Q\}$. One can control further by
		\begin{equation}
		 \label{eq:asinI}
		 \begin{split}
		  &\sum_{Q \in \mathcal Q_{k_{n}} }\int_{Q} \dist_N(\E_{k_{n,1}}(u_i)(x),u_i(x))\d x +\sum_{Q \in \mathcal Q_{k_{n}} }\int_{Q} \dist_N( \E_{k_{n+1}} (u_i)(x) , u_i(x))\d x \\
		&\qquad \leq \int_{\mathbb R^d} \dist_N(\E_{k_{n}} (u_i)(x), u_i(x)) \d x  + \int_{\mathbb R^d} \dist_N( \E_{k_{n+1}}( u_i)(x), u_i(x)) \d x
		\end{split}
		\end{equation}
	which is controlled using \eqref{eq:alphaover2ell} and \eqref{eq:medianinequality}.
	We deduce that \ref{item:parallelu} is controlled by
	\begin{multline}\label{eq:icontrol1}
	 \Gamma\sum_{i \in \set{0,1}}\sum_{n \in \mathbb N\setminus \{0\}}(2^{-n} + 2^{-(n+1)})  + \sum_{i \in \set{0,1}}\sum_{Q \in \mathcal Q_{k_{0}}}\fint_{Q}\int_{Q} \dist_N(u_i(x),u_i(y))\d x \d y  
	 \\\leq 6\Gamma + \sum_{i \in \set{0,1}}\frac{1}{L^d}\iint_{\substack{\R^d \times \R^d \\|x - y|_\infty\leq 2L}}	\dist_N(u_i(x),u_i(y)) \d x \d y.
	\end{multline}

	Next, for  \ref{item:perpu}, we have for any two cubes $Q,Q' \in \mathcal Q_{k_{n}}$ adjacent, using in order constantness on cubes and the triangle inequality,
	\begin{align}
		\notag &2^{(1 - k_{n})(d - 1)}(2^{-k_{n}} - 2^{-k_{n+1}})L^d\fint_{Q}\fint_{Q'}\dist_N(\E_{k_{n}} (u_i)(x) , \E_{k_{n}} (u_i)(y))\d x \d y \\
		&\qquad \leq \label{eq:thefirsttemr}2^{(1 - k_{n})(d - 1)}(2^{-k_{n}} - 2^{-k_{n+1}})L^d\Bigg[\fint_{Q}\fint_{Q'}\dist_N(u_i(x),u_i(y))\d x \d y\\
		&\qquad\qquad  \label{eq:dhslkjvoduq}+ \fint_{Q}\fint_{Q'}\dist_N(u_i(x),\E_{k_{n}} (u_i)(y))\d x \d y \\
		&\qquad\qquad  \label{eq:dhslkjvoduqII}+\fint_{Q}\fint_{Q'}\dist_N(\E_{k_{n}} (u_i)(x),  u_i(y))\d x \d y \Bigg].
	\end{align}
	The two terms \eqref{eq:dhslkjvoduq}--\eqref{eq:dhslkjvoduqII} are treated noting that the prefactor can be estimated using
	\[
		2^{(1 - k_{n})(d - 1)}(2^{-k_{n}} - 2^{-k_{n+1}})L^d \leq 2^{(1 - k_{n})(d - 1)} 2^{-k_{n}}L^d = 2^{d-1}2^{-d k_{n}} = |Q|/2
	\]
	so that
	\begin{equation}
	\label{eq:qdissapear}
	\begin{split}
		\frac{|Q|}{2}\fint_{Q}\fint_{Q'} \dist_N(u_i(x),\E_{k_{n}}(u) (Q'))\d x \d y & \leq\frac{1}{2}\fint_{Q}\int_{Q'}\dist_N(u_i(x),\E_{k_{n}} (u_i)(Q'))\d x\d y \\
		&= \frac{1}{2}\int_{Q'}\dist_N(u_i(x),\E_{k_{n}} (u_i)(Q'))\d x
	\end{split}
	\end{equation}
	and this will be estimated below similarly to  \eqref{eq:asinI}, see \eqref{eq:twotermsatonece}.
	
	So, we focus on the term \eqref{eq:thefirsttemr}. Since $Q'$ is a neighbour of $Q$, we first observe $Q \times Q' \subset Q \times 3Q$ so that \eqref{eq:thefirsttemr} is controlled by
\begin{equation}
 \begin{split}
	&2^{(1 - k_{n})(d - 1)}(2^{-k_{n}} - 2^{-k_{n+1}})L^d\fint_{Q}\fint_{Q'}\dist_N(u_i(x),u_i(y))\d x \d y\\
	&\qquad \leq 2^{(1 - k_{n})(d - 1)}(2^{-k_{n}} - 2^{-k_{n+1,i}})L^d|Q|^{-2}\int_{Q}\int_{3Q}\dist_N(u_i(x),u_i(y))\d x \d y\\
	&\qquad \leq 2^{(1 - k_{n})(d - 1)} 2^d(2^{-k_{n}} - 2^{-k_{n+1}})(2^{1 -k_{n}})^{-d}L^{-d}\int_{Q}\int_{3Q}\dist_N(u_i(x),u_i(y))\d x \d y \\
	&\qquad \leq 3^d\cdot 2^{k_{n}}(2^{-k_{n}} - 2^{-k_{n+1}})L^{-d}\int_{Q}\fint_{|h|_\infty \leq 3\cdot 2^{1-k_{n}}L}\dist_N(u_i(x),u_i(x + h))\d h\d x
	\end{split}
	\label{eq:otherterm}
	\end{equation}
	by the change of variable $y = x + h$, $|h|_\infty \leq |x - y|_\infty \leq \diam(3Q) \leq 2\cdot 3\cdot 2^{- k_{n}}L$.
	So, the term \ref{item:perpu} is estimated in view of  \eqref{eq:qdissapear} and \eqref{eq:otherterm} by
	\begin{multline}\label{eq:twotermsatonece}
		2d\sum_{i \in \set{0,1}}\sum_{n \in \mathbb N}\int_{\R^d}\dist_N(u_i(x),\E_{k_{n}} (u_i)(x))\d x \\+2d 3^d \sum_{i \in \set{0,1}}\sum_{n \in \mathbb N}(2^{-k_{n}} - 2^{-k_{n+1}})2^{ k_{n}}\times\\\int_{\R^d}\fint_{|h|_\infty \leq 3\cdot 2^{1-k_{n}}L}\dist_N(u_i(x),u_i(x + h))\d h\d x
	\end{multline}
	where the $2d$ arise as the sum over the $2d$ neighbours of the cube $Q \in \mathcal Q_{k_{n}}$
	
	Using \eqref{eq:alphaover2ell} for the first term and \eqref{eq:distbtwcubes} for the second one when $n \geq 1$, we control \eqref{eq:twotermsatonece} by
	\begin{multline}\label{eq:treeterms}
		2d\sum_{i \in \set{0,1}}\sum_{n \in \mathbb N}\frac{\Gamma}{2^n} +
		2d \sum_{i \in \set{0,1}}\sum_{n \in \mathbb N_*}(2^{-k_{n}} - 2^{-k_{n+1}})2^{ k_{n}}\frac{\Gamma}{2^{k_{n-1}}} \\+ 2d \sum_{i \in \set{0,1}}\int_{\R^d}\fint_{|h|_\infty \leq 6L}\dist_N(u_i(x),u_i(x + h))\d h\d x.
	\end{multline}
	The first term of \eqref{eq:treeterms} is equal to $8d \Gamma$. The second one can be controlled by
\begin{equation}
\begin{split}
	2d \Gamma \sum_{i \in \set{0,1}} \sum_{n \in \mathbb{N}_*} \left( \frac{1}{2^{k_{n-1}}} - \frac{2^{ k_{n}}}{2^{k_{n-1}} 2^{k_{n+1}}} \right)&\leq 2d \Gamma \sum_{i \in \set{0,1}} \sum_{n \in \mathbb{N}_*} \left( \frac{1}{2^{ k_{n-1}}} - \frac{1}{2^{k_{n+1}}} \right) \\
	&\leq 4d\Gamma \sum_{i \in \set{0,1}} \frac{1}{2^{k_{0}}} + \frac{1}{2^{k_{1}}} \leq 3d\Gamma.
\end{split}
\end{equation}
	The last term of \eqref{eq:treeterms} is estimated by
	\[
		\frac{d}{L^d} \sum_{i \in \set{0,1}}\iint_{\substack{\R^d\times \R^d\\ |x- y|_\infty \leq 6L}}\dist_N(u_i(x),u_i(y))\d x\d y.
	\]
	We deduce that \eqref{eq:treeterms} is estimated by 
	\begin{equation}\label{eq:tocontrolperpu}
		(8d + 3d)\Gamma + \frac{d}{L^d} \sum_{i \in \set{0,1}}\iint_{\substack{\R^d\times \R^d\\ |x- y|_{\infty} \leq 6L}}\dist_N(u_i(x),u_i(y))\d x\d y
	\end{equation}
	so that in turn \ref{item:perpu} is controlled by it.
	
	We conclude that \eqref{eq:ineq_smallllemma_varconnect2map} is controlled by \ref{item:umeetsv}, \ref{item:parallelu} and \ref{item:perpu} which are respectively controlled by \eqref{eq:icontrol3}, \eqref{eq:icontrol1} and  \eqref{eq:tocontrolperpu}
	\begin{equation*}
		 \big(6 + 	11d\big)\Gamma +  \int_{\R^d} \dist_N(u_0(x),u_1(x)) \d x + \frac{C_d}{L^d}\sum_{i \in \set{0,1}}\iint_{|x - y|_{\infty}\leq 6L}	\dist_N(u_i(x),u_i(y)) \d x \d y
	\end{equation*}
	by the choice of $\Gamma$ in \eqref{eq:choiceofalpha},
	this conclude the proof of the estimate \eqref{eq:ineq_smallllemma_varconnect2map} of the Proposition \ref{prop:smalllemma_varconnect2map} but with \guillemet{$|x - y|_\infty \leq 6L$}; Lemma \ref{lemma:scaling} below allows us to write \guillemet{$|x - y| \leq L$} at the price of increasing the constant $C_d$.
\end{Proof}

Inspection of the proof of Proposition \ref{prop:smalllemma_varconnect2map} shows that one could have chosen if \(N\) was convex
\[
	\E_k (u_i)(Q) = \fint_Q u_i.
\]
However our present choice of  $\E_k (u_i)(Q)$ implies that the essential images $\mathrm{Im}(U) \subset \mathrm{Im}(u_0) \cup \mathrm{Im}(u_1) \subset N$.

\begin{lemma}\label{lemma:scaling} Let $u : \R^d \to N$. 
	If $0 <\ell \leq L$,
	\begin{equation}\label{eq:scaling_lemma}
		\frac{1}{L^{d+1}}\iint_{\substack{\R^d \times \R^d \\ |x - y| \leq L}}\dist_{N}(u(x),u(y)) \d x \d y
		\leq\frac{2^{d + 1}}{\ell^{d + 1}}\iint_{\substack{\R^d \times \R^d \\ |x - y| \leq \ell}}\dist_{N}(u(x),u(y)) \d x \d y.
	\end{equation}
\end{lemma}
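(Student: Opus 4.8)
The plan is to compare the two normalized double integrals at scales $\ell$ and $L$ by a standard covering argument. First I would reduce to the case $L = 2\ell$: if I can show that passing from scale $\ell$ to scale $2\ell$ costs only a factor of $2^{d+1}$ (that is the right-hand side constant), then for a general $\ell \le L$ I iterate along the dyadic scales $\ell, 2\ell, 4\ell, \dots$ up to the first power of two exceeding $L$, and the geometric telescoping of the factors $2^{-(d+1)}$ converges; but in fact the cleaner route is to directly cover the ball $\{|x-y|\le L\}$ in $\R^d \times \R^d$ by translates of $\{|x-y|\le \ell\}$.

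Concretely, given $x,y$ with $|x-y| \le L$, I pick a point $z$ on the segment from $x$ to $y$ (for instance the midpoint of a suitable subdivision) so that $|x - z| \le \ell$ and $|z - y| \le \ell$ — this requires $2\ell \ge L$, which is why the one-doubling step is the basic brick. Then by the triangle inequality $\dist_N(u(x),u(y)) \le \dist_N(u(x),u(z)) + \dist_N(u(z),u(y))$, and I integrate this over the relevant region in $(x,y,z)$, using Fubini to bound $\iint_{|x-y|\le L}\dist_N(u(x),u(y))\d x\d y$ by a constant times $\Leb^d(\B^d_L)/\Leb^d(\B^d_\ell)$ times $\iint_{|x-y|\le\ell}\dist_N(u(x),u(y))\d x\d y$. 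Since $\Leb^d(\B^d_L)/\Leb^d(\B^d_\ell) = (L/\ell)^d$, after dividing by $L^{d+1}$ one gets exactly a bound of the form $C_d\,\ell^{-(d+1)}\iint_{|x-y|\le\ell}$. The one extra power of $\ell$ versus $L$ in the normalization $\ell^{d+1}$ rather than $\ell^d$ is what forces the factor to be slightly more than $(L/\ell)^d/L = 1/\ell$, and chasing the constants through the midpoint construction yields the clean value $2^{d+1}$.

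A slightly more careful bookkeeping to hit the stated constant $2^{d+1}$ exactly: write $L = 2\ell$ first. For $|x-y|\le 2\ell$ put $z = (x+y)/2$, so both $|x-z|,|z-y| \le \ell$. Then
\[
\iint_{|x-y|\le 2\ell}\dist_N(u(x),u(y))\d x\d y
\le \iint_{|x-y|\le 2\ell}\bigl(\dist_N(u(x),u(z))+\dist_N(u(z),u(y))\bigr)\d x\d y,
\]
and substituting $(x,z)$ as free variables with $y = 2z-x$ (Jacobian $1$, and $|z-y|=|x-z|\le\ell$) bounds each term by $\Leb^d(\B^d_\ell)\int_{\R^d}$-type quantities — more precisely by $2^d \iint_{|x-z|\le\ell}\dist_N(u(x),u(z))\d x\d z$ after accounting for the change of variables stretching the $z$-domain. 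Dividing by $(2\ell)^{d+1}$ on the left and by $\ell^{d+1}$ on the right, the powers of $2$ combine to give the factor $2^{d+1}$. The general case $\ell\le L$ follows since the left-hand side of \eqref{eq:scaling_lemma} is, up to the harmless monotonicity in the radius, controlled by its value at the smallest dyadic scale $2^j\ell \ge L$, and one more application of the doubling step absorbs the gap.

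The main obstacle is purely one of constant-tracking rather than of ideas: making sure the change of variables $y = 2z - x$ is handled correctly (it is an affine unimodular map in $x$ for fixed $z$, but one must check that the region $\{(x,z) : |x-z|\le\ell\}$ indeed covers, after this substitution, all of $\{(x,y):|x-y|\le 2\ell\}$, which it does since every such $(x,y)$ arises from $z=(x+y)/2$), and verifying that the volume ratio $\Leb^d(\B^d_L)/\Leb^d(\B^d_\ell)$ enters with the correct exponent so that the final constant is $2^{d+1}$ and not something larger. Since the statement allows increasing $C_d$ (it is only used through Proposition~\ref{prop:smalllemma_varconnect2map} to rewrite ``$|x-y|_\infty\le 6L$'' as ``$|x-y|\le L$''), even a crude version of this argument suffices, but the clean doubling step is the most transparent way to present it.
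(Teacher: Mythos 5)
Your argument is correct and is essentially the paper's proof: the midpoint $z=(x+y)/2$ with the triangle inequality and the change of variables (factor $2^d$ per term) gives the doubling step $\iint_{|x-y|\le 2\ell}\dist_N(u(x),u(y))\,\mathrm{d}x\,\mathrm{d}y\le 2^{d+1}\iint_{|x-y|\le\ell}\dist_N(u(x),u(y))\,\mathrm{d}x\,\mathrm{d}y$, which the paper then iterates and applies with $n$ chosen so that $L\le 2^{n}\ell\le 2L$, exactly your dyadic rounding step that produces the constant $2^{d+1}$. The only cosmetic slip is the parenthetical ``Jacobian $1$'' remark, but your subsequent bookkeeping correctly inserts the factor $2^d$, so the constant comes out as stated.
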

\begin{Proof}{Lemma}{lemma:scaling}
We have, by the triangle inequality and a change of variable
	\begin{equation}
\begin{split}
& \iint_{\substack{\R^d \times \R^d \\ |x - y| \leq L}}\dist_{N}(u(x),u(y)) \d x \d y \\
		&\qquad \leq \iint_{\substack{\R^d \times \R^d \\ |x - y| \leq L}}\dist_{N}(u(x),u(\tfrac{x+y}{2})) + \dist_{N}(u(\tfrac{x+y}{2}),u(y)) \d x \d y  \\
		&\qquad \leq 2^{1+d}\iint_{\substack{\R^d \times \R^d \\ |x - z| \leq L/2}}\dist_{N}(u(x),u(z)) \d x \d z.
\end{split}
\end{equation}
	By mathematical induction, for each $n \in \mathbb N$, 
	\begin{equation*}
		\iint_{\substack{\R^d \times \R^d \\ |x - y| \leq L}}\dist_{N}(u(x),u(y)) \d x \d y \leq \\
		2^{n(d +1)}\iint_{\substack{\R^d \times \R^d \\ |x - y| \leq L/2^n}}\dist_{N}(u(x),u(y)) \d x \d y.
	\end{equation*}
	Given $\ell \leq L$, there exists $n \in \mathbb N$ such that $L \leq 2^{n}\ell \leq 2L$. We deduce \eqref{eq:scaling_lemma}.
\end{Proof}

\section{Connecting two maps by a Sobolev map}\label{sec:connecting-two-maps-by-a-sobolev-map-and-corollaries}

Proposition \ref{prop:w11case} smooths the map obtained by proposition \ref{prop:smalllemma_varconnect2map}. So does Proposition \ref{prop:oncubes} in a localized way.

\begin{proposition}\label{prop:w11case}   
	If $u_0,u_1: \R^d \to N$ satisfy \eqref{eq:thelimitassumption} and \eqref{eq:intecondition},
	then, for each $L >0$,  there exists $U \in \dotW^{1,1}(\R^d \times (-L,L),N)$ such that  $\Tr_{\R^d \times \{-L\}}U = u_0$, $\Tr_{\R^d \times \{L\}}U = u_1$ and
	\begin{multline}
		\label{eq:w11case_ineq}\int_{\R^d \times (-L,L)} |\D U| \leq C_d\int_{\R^d} \dist_N(u_0(x),u_1(x)) \d x\\ +\frac{C_d}{L^d}\sum_{i \in \set{0,1}} \iint_{\substack{\R^d \times \R^d \\|x - y|\leq L}}	\dist_N(u_i(x),u_i(y)) \d x \d y
	\end{multline}
	where $C_d >0$ only depends on $d$.
\end{proposition}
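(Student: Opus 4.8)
The plan is to take the $\dot\BV$ extension $V \in \dot\BV(\R^d \times (-L, L), N)$ produced by Proposition~\ref{prop:smalllemma_varconnect2map} and mollify it, region by region, to obtain a $\dotW^{1,1}$ map with the same traces and a comparable energy. The key structural feature to exploit is that $V$ is constant on each rectangular cuboid $Q \times I_{n,i}$, with $Q \in \mathcal Q_{k_n}$, so its distributional derivative is a sum of surface measures supported on the faces $\mathrm{J}_V$, and \eqref{eq:ineq_smallllemma_varconnect2map} bounds the total mass $\int_{\mathrm{J}_V \cap (\R^d \times (-L,L))}\dist_N(\Tr^+_{\mathrm{J}_V}V, \Tr^-_{\mathrm{J}_V}V)$ by the right-hand side of \eqref{eq:w11case_ineq}. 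So what remains is a purely local \emph{desingularization} step: replace the jump across each face by a smooth geodesic interpolation in a thin neighborhood of that face, paying an $\mathrm L^1$ cost of the differential comparable to the area of the face times the geodesic distance between the two constant values — precisely the jump contribution already controlled.

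Concretely, I would first reduce to a local model on a single interface. Near a face $F$ shared by two cuboids on which $V$ takes constant values $p, q \in N$, choose a minimizing geodesic $\gamma : [0,1] \to N$ from $p$ to $q$ (which exists by completeness / Hopf--Rinow, as recorded in Section~\ref{sec:precise-assumptions-and-vocabulary-in-use}), with $|\gamma'| \equiv \dist_N(p,q)$, and set $U(x) = \gamma(\varphi(\mathrm{sdist}(x, F)/\delta_F))$ in a one-sided or two-sided collar of width $\delta_F$ around $F$, where $\varphi$ is a fixed smooth cutoff profile and $\mathrm{sdist}$ the signed distance. Then $|\D U| \le \dist_N(p,q)\,|\varphi'|/\delta_F$ in the collar, so $\int |\D U| \le C \Hau^d(F)\dist_N(p,q)$. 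One must choose the collar widths $\delta_F$ small enough (decreasing fast as $n \to \infty$, and depending on the local geometry of the cuboid grid) that the collars around distinct faces of a given cuboid do not overlap and that the collars fit inside the adjacent cuboids; since within a fixed generation the cuboids are comparable, a choice like $\delta_F \sim c_d 2^{-k_n}L$ (or a small fraction of the smaller side) works, and overlaps at the corners where three or more faces meet are handled by shrinking further or by a standard partition-of-unity/gluing argument. Summing over all faces and using \eqref{eq:ineq_smallllemma_varconnect2map} gives \eqref{eq:w11case_ineq}. To keep the manifold constraint one crucially uses that $\gamma$ takes values in $N$; to keep the traces, one notes $U = V$ outside the collars, so $U = u_0$ on $\R^d \times \{-L\}$ and $U = u_1$ on $\R^d \times \{L\}$ provided the collars stay away from $t = \pm L$, which is arranged since $I_{n,i}$ accumulates at $\pm L$ only as $n \to \infty$ with $\delta_F \to 0$; alternatively invoke continuity of the trace under $\mathrm L^1$-convergence together with \eqref{eq:tracelimit}.

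The one technical nuisance I expect is the bookkeeping at the \emph{edges and corners} of the cuboid complex, where several faces meet and the naive collar construction would be multivalued: there one should either interpolate on a slightly larger scale using a single radial cutoff from the lower-dimensional skeleton, or dominate the corner contributions crudely — they have lower-dimensional measure and, after the geodesic bound, contribute a sum that telescopes/is summable because $\sum_n 2^{-n}\Gamma < \infty$ and the skeleton pieces at generation $n$ have area $\lesssim 2^{-k_n}$ times the face areas already summed. A cleaner route, which I would actually follow, is to do the smoothing with a single convolution argument adapted to the dyadic structure: partition $\R^d \times (-L,L)$ into the closed cuboids, in each cuboid $Q \times I_{n,i}$ keep $V$ constant on the inner portion and on the outer collar replace it by the $N$-valued map $x \mapsto \pi_N\bigl((1-\theta(x)) p_{Q\times I_{n,i}} + \theta(x)\,(\text{nearest-face value})\bigr)$ smoothed out — but since $N$ need not be convex, the geodesic interpolation is the safe choice and I would stick with it, absorbing the edge issues into the constant $C_d$. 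Modulo this, every estimate is local, dimensional, and already quantified by Proposition~\ref{prop:smalllemma_varconnect2map}, so the main obstacle is genuinely just the combinatorial geometry of the collars, not any new analytic input.
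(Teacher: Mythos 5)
Your overall strategy is the paper's: take the piecewise-constant $\dot\BV$ map of Proposition~\ref{prop:smalllemma_varconnect2map} and replace each jump across a face by a one-dimensional geodesic interpolation transverse to that face, at a cost comparable to $\Hau^d(F)\,\dist_N(p,q)$ per face, then sum using \eqref{eq:ineq_smallllemma_varconnect2map} and recover the traces from \eqref{eq:tracelimit}. The difference is in how the interpolation regions are shaped, and that is exactly where your write-up has a genuine gap: with constant-width collars of size $\delta_F\sim 2^{-k_n}L$ the map is simply not defined (or multiply defined) in a neighborhood of the $(d-1)$-dimensional skeleton where several faces of the cuboid complex meet, and you explicitly leave this unresolved (``absorbing the edge issues into the constant $C_d$''). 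None of your proposed fixes is carried out: a partition-of-unity/gluing argument is not available for $N$-valued maps (averaging leaves $N$, as you yourself note for the convex-combination variant), and ``shrinking further'' does not by itself remove the overlap at edges, it only moves it. Since this corner bookkeeping is the one nontrivial geometric point of this step, the estimate \eqref{eq:w11case_ineq} and even the membership $U\in\dotW^{1,1}$ are not actually established by the argument as written. The paper's Lemma~\ref{lemma:interpolation} is designed precisely to bypass this: the profile $\gamma\bigl(2t/\dist(x,\partial P)\bigr)$ is modified only inside the double cone $\Sigma=\{(x,t):2|t|\le\dist(x,\partial P)\}$, which tapers to zero thickness at $\partial P$, so the modifications attached to distinct faces have pairwise disjoint supports by construction (property \ref{item:geometryofthemap}), while the energy bound \eqref{eq:usefulineqinlemmainterpolation} is still area times geodesic distance. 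Your parenthetical ``single radial cutoff from the lower-dimensional skeleton'' is this idea, but it is asserted rather than implemented.

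A secondary inaccuracy: the claim that the traces at $t=\pm L$ are preserved because ``the collars stay away from $t=\pm L$'' cannot be used as stated, since the faces (hence the modified regions) accumulate at $t=\pm L$, so $U\neq U_*$ in every neighbourhood of $\R^d\times\{\pm L\}$. The correct route is the one you mention only as a fallback and which the paper executes: compare $\tr_{\R^d\times\{t_{i,n}\}}U$ with $u_i$ on the mid-slices $t_{i,n}$ of the slabs $I_{n,i}$ (where only the perpendicular-face modifications are seen), using \eqref{eq:L1estimate}, \eqref{eq:thatchoiceoft} and \eqref{eq:tracelimit}, and conclude by continuity of the trace under translations. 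With the tapered interpolation of Lemma~\ref{lemma:interpolation} in place of your collars, the rest of your outline matches the paper's proof.
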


\begin{remark} \label{rmk:scaling}
	In \cite[Theorem 1.8]{leoni2019traces}, in the framework of $N = \R^\nu, \dist_N(p,q) =|p - q|$, the same result is obtained by  other means. The authors assume the integrability condition \eqref{eq:intecondition}; the equivalent (see Proposition \ref{proposition_Lp_to_Linfty_translation}) condition than \eqref{eq:thelimitassumption}, for $i \in \set{0,1}$,
\[
\lim_{\epsilon \to 0}\sup_{|h| \leq \epsilon}\int_{\R^d}|u_i(x) - u_i(x +h)|  \d x = 0,
\]
is required there; corresponding estimates are obtained.
\end{remark}

\begin{proposition} \label{prop:oncubes} Let $Q = Q(0,1) \subset \R^{d+1}$ be the unit cube of radius one centered at the origin.
	If $u : \partial Q \to N$ is integrable and  equals to $\pointN \in N$ on each face except two opposed faces, then there exists a map $U \in \dotW^{1,1}(Q,N)$ of trace $\tr_{\partial Q}U = u$ and
	\[
	\int_{Q} |\D U| \leq C_d \int_{\partial Q}	\dist_N(u(x),\pointN) \d x.
	\]
	where $C_d >0$ is a constant depending only on $d$.
\end{proposition}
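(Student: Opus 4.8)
The plan is to reduce the problem to a one-dimensional interpolation along the axis joining the two nontrivial faces, combined with the trace/extension estimate on balls already available from Proposition~\ref{prop:yetanothertraceineq} (or rather its cube analogue). Write $Q = Q' \times (-\tfrac12, \tfrac12)$ where $Q' = Q(0,1) \subset \R^d$, and suppose the two opposed faces carrying nontrivial data are $Q' \times \{-\tfrac12\}$ and $Q' \times \{\tfrac12\}$, with boundary values $v_{-} , v_{+} : Q' \to N$; on the lateral faces $\partial Q' \times (-\tfrac12,\tfrac12)$ the datum is the constant $\pointN$. First I would reflect/extend $v_{\pm}$ from $Q'$ to all of $\R^d$ by setting them equal to $\pointN$ outside $Q'$; since $v_\pm$ agrees with $\pointN$ on $\partial Q'$ in the trace sense (this is the compatibility needed for $u$ to be a genuine boundary datum), these extended maps $\tilde v_\pm : \R^d \to N$ are integrable, and the quantities $\int_{\R^d}\dist_N(\tilde v_\pm(x),\pointN)\d x$ and $\int_{\R^d}\dist_N(\tilde v_-(x),\tilde v_+(x))\d x$ are all controlled by $\int_{\partial Q}\dist_N(u(x),\pointN)\d x$, and likewise the nonlocal seminorms $\iint_{|x-y|\le 1}\dist_N(\tilde v_\pm(x),\tilde v_\pm(y))\d x\d y$ are controlled by the same quantity (here one uses a trace-type inequality on $Q'$ to bound the $\tilde v_\pm$-seminorm by the boundary energy; this is exactly the content available from Proposition~\ref{prop:yetanothertraceineq} read backwards, or can be obtained directly from the fact that $v_\pm$ are traces of some Sobolev extension whose energy is the boundary integral).

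Once $\tilde v_\pm$ satisfy hypotheses \eqref{eq:thelimitassumption} and \eqref{eq:intecondition}, I apply Proposition~\ref{prop:w11case} with $L = \tfrac12$ to obtain $W \in \dotW^{1,1}(\R^d\times(-\tfrac12,\tfrac12),N)$ with $\Tr_{\R^d\times\{-1/2\}}W = \tilde v_-$, $\Tr_{\R^d\times\{1/2\}}W = \tilde v_+$ and $\int |\D W|$ bounded by a constant times $\int_{\partial Q}\dist_N(u(x),\pointN)\d x$. The map $U$ is then the restriction of $W$ to $Q = Q'\times(-\tfrac12,\tfrac12)$. Its energy is obviously bounded, its trace on the two caps is correct by construction; the only point requiring care is its trace on the lateral faces $\partial Q'\times(-\tfrac12,\tfrac12)$, which is not controlled for free by Proposition~\ref{prop:w11case}.

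The main obstacle — and the step I would spend the most effort on — is therefore arranging that $W$ restricted to $Q$ has trace $\pointN$ on the lateral boundary. Two routes seem viable. The cleanest is to track, through the proof of Proposition~\ref{prop:w11case} (which builds $U$ from the BV map of Proposition~\ref{prop:smalllemma_varconnect2map}, and there from the dyadic pieces $\E_k(u_i)$), the fact that when $\tilde v_\pm \equiv \pointN$ outside $Q'$ the construction produces $W \equiv \pointN$ outside a slightly enlarged slab over $Q'$ — indeed $\E_k(u_i)(Q) = \pointN$ for every dyadic cube $Q$ not meeting $Q'$, and the smoothing in Proposition~\ref{prop:w11case} is local, so $W = \pointN$ on a neighbourhood of $\partial Q'\times(-\tfrac12,\tfrac12)$ after a harmless rescaling of $Q'$ by a factor slightly less than one; composing with the resulting bi-Lipschitz dilation of $Q'$ fixes the boundary datum while changing the energy only by a dimensional constant. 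Alternatively, one multiplies the construction by a cutoff in the $x'$ variable supported in $Q'$, interpolating between $W$ and the constant $\pointN$ on a collar $\{x' : \dist(x',\partial Q') < \delta\}$; the extra energy is $\lesssim \delta^{-1}\int_{\text{collar}\times(-1/2,1/2)}\dist_N(W,\pointN)$, and since $\dist_N(W(\cdot),\pointN)$ is itself in $\mathrm L^1$ with norm controlled by the boundary energy (it is the composition of the $1$-Lipschitz function $\dist_N(\cdot,\pointN)$ with $W$), choosing $\delta$ comparable to $1$ closes the estimate. Either way one lands on $\int_Q|\D U| \le C_d\int_{\partial Q}\dist_N(u(x),\pointN)\d x$, as claimed.
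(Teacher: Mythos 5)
Your reduction to the slab problem is sound and is indeed the paper's first step: extend the data on the two nontrivial faces by $\pointN$ to all of $\R^d$ and invoke the machinery of Propositions~\ref{prop:smalllemma_varconnect2map} and~\ref{prop:w11case}. (Note, though, that verifying \eqref{eq:thelimitassumption}--\eqref{eq:intecondition} needs no trace inequality: the triangle inequality through $\pointN$ bounds both $\int\dist_N(\tilde v_-,\tilde v_+)$ and the nonlocal seminorms by $C_d\int_{\partial Q}\dist_N(u,\pointN)$, whereas invoking Proposition~\ref{prop:yetanothertraceineq} ``read backwards'' or ``the fact that $v_\pm$ are traces of some Sobolev extension'' is circular, since such an extension is exactly what you are constructing.) The genuine gap is in the step you yourself identify as the crux, the lateral trace, and neither of your two routes closes it. Route (a) rests on a false premise: after the smoothing of Proposition~\ref{prop:w11case}, $W$ is \emph{not} equal to $\pointN$ on a neighbourhood of $\partial Q'\times(-\frac12,\frac12)$, because the jump faces of the BV map lying on the lateral boundary (separating an inside cuboid with value $\E_{k_n}(v_i)(Q'')$ from the outside value $\pointN$) are smoothed by the two-sided Lemma~\ref{lemma:interpolation}, which modifies the map on \emph{both} sides of these faces and gives them the trace $\gamma(0)\neq\pointN$. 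And even if you first shrink the data (or enlarge the cylinder) so that $W\equiv\pointN$ near $\partial Q'\times(-\frac12,\frac12)$, composing with a dilation of $Q'$ that is independent of the vertical variable replaces the traces on the two nontrivial faces by $u$ precomposed with that dilation, so the required identity $\tr_{\partial Q}U=u$ fails. Route (b) fails twice over: multiplying by a cutoff, i.e.\ linearly interpolating between $W$ and $\pointN$, leaves the manifold $N$ (and a geodesic substitute would require Lipschitz dependence of geodesics on their endpoints, which a general complete $N$ does not provide), and, independently of the target constraint, a cutoff in the $x'$ variable acting over the whole height of the cylinder alters the trace on the top and bottom faces throughout the collar, again destroying the prescribed datum.

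The paper closes precisely this gap with a device absent from your proposal: when smoothing the BV map restricted to $Q$, the faces of cuboids adjacent to the lateral boundary are treated with the one-sided interpolation Lemma~\ref{lemma:interpolationonesided} instead of Lemma~\ref{lemma:interpolation}. This pins the trace on the lateral faces to be exactly $\pointN$, leaves the data on the two opposite faces untouched, and costs only $8\dist_N(\pointN,\E_{k_n}(v_i)(Q''))\Leb^d(P)$ per face, a constant multiple of the jump energy already controlled by \eqref{eq:ineq_smallllemma_varconnect2map}. (Your route (a) could in principle be repaired by a $t$-dependent bi-Lipschitz deformation equal to the identity at $t=\pm\frac12$ which pushes the lateral boundary into the region where $W\equiv\pointN$, using that the smoothing cones shrink towards the edges; but that requires tracking the geometry of the modification regions and is not what you wrote.)
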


We write an interpolating tool  for the $\dotW^{1,1}$-extension, Proposition \ref{prop:w11case}.
\begin{lemma}\label{lemma:interpolation}
	
	If $P =\prod_{i = 1}^d[0,|\ell_i|] $ and $a,b \in N$, there exists $u \in \dotW^{1,1}(P\times \R, N)$ satisfying 
	\begin{enumerate}[label=(\roman*)]
		\item \label{item:geometryofthemap} If $t  < - \dist(x, \partial  P)/2$, $u(x,t) = a$ and  if $t  > \dist(x, \partial P)/2$, $u(x,t) = b$,
		\item \label{item:geometryofthemap_II} $\dist_N(a,u(x,t)) \leq  \dist_N(a,b)$ and  $\dist_N(b,u(x,t)) \leq  \dist_N(a,b)$.
	\end{enumerate}
	Moreover, 
	\begin{equation}\label{eq:usefulineqinlemmainterpolation}  	\int_{P \times \R}|\D u| \leq 4 \dist_N(a,b)\Leb^d(P)
	\end{equation}
	and
	\begin{equation}\label{eq:L1estimate}
		\sup_{ t\in \R}	\int_P \dist_N(a \one_{(-\infty,0)}(t) + b \one_{[0,+\infty)}(t), u(x,t))\d x \leq \dist_N(a,b)\Leb^d(P).
	\end{equation}
\end{lemma}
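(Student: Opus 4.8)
The construction is essentially a one-dimensional interpolation between $a$ and $b$, spread over a neighbourhood of $\partial P$ whose thickness is proportional to $\dist(x, \partial P)$, arranged so that the map is constant $= a$ below a cone and constant $= b$ above it. First I would fix a minimizing geodesic $\gamma \colon [0,1] \to N$ from $a$ to $b$ parametrized proportionally to arclength, so that $\dist_N(\gamma(s), \gamma(s')) = |s - s'| \dist_N(a,b)$ and in particular $\dist_N(a, \gamma(s)), \dist_N(b,\gamma(s)) \le \dist_N(a,b)$; this immediately gives property \ref{item:geometryofthemap_II} once $u$ is built as a reparametrization of $\gamma$. Then I would define
\[
u(x,t) = \gamma\!\left( \varphi\!\left( \frac{t}{\dist(x, \partial P)} \right) \right),
\]
where $\varphi \colon \R \to [0,1]$ is a fixed Lipschitz cutoff with $\varphi \equiv 0$ on $(-\infty, -1/2]$, $\varphi \equiv 1$ on $[1/2, \infty)$, and $\varphi$ affine in between; with the convention $u(x,t) = a$ for $t \le 0$ and $u(x,t) = b$ for $t > 0$ when $\dist(x,\partial P) = 0$. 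This makes \ref{item:geometryofthemap} hold by inspection.

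Next I would estimate $|\D u|$. Writing $\rho(x) = \dist(x, \partial P)$, which is $1$-Lipschitz with $|\nabla \rho| \le 1$ a.e., the chain rule gives, on the region where $|t| < \rho(x)/2$,
\[
|\D u(x,t)| \le |\gamma'| \cdot |\varphi'| \cdot \left( \frac{1}{\rho(x)} + \frac{|t|}{\rho(x)^2} |\nabla \rho(x)| \right)
\le \dist_N(a,b) \cdot C \cdot \frac{1}{\rho(x)},
\]
since $|\gamma'| \le \dist_N(a,b)$, $|\varphi'| \le 1$, and $|t|/\rho(x) < 1/2$ on the relevant region; $|\D u| = 0$ elsewhere. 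Integrating, using that the slab $\{|t| < \rho(x)/2\}$ has $t$-length $\rho(x)$,
\[
\int_{P \times \R} |\D u| \le C\, \dist_N(a,b) \int_P \frac{1}{\rho(x)} \cdot \rho(x) \, \d x = C\, \dist_N(a,b)\, \Leb^d(P),
\]
and chasing the constants should yield the stated bound $4 \dist_N(a,b) \Leb^d(P)$ (picking $\varphi$ with $|\varphi'| \le 2$ on an interval of length $1$, one gets exactly the factor $4$ after a short computation). For the $\mathrm L^1$-estimate \eqref{eq:L1estimate}: for $t \ge 0$ the comparison point is $b$, and $\dist_N(b, u(x,t)) \le \dist_N(a,b)$ with $u(x,t) = b$ whenever $t \ge \rho(x)/2$; hence $\dist_N(b, u(x,t)) \ne 0$ only on $\{0 \le t < \rho(x)/2\}$, and symmetrically for $t < 0$. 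So for each fixed $t$,
\[
\int_P \dist_N\big(a\one_{(-\infty,0)}(t) + b\one_{[0,\infty)}(t), u(x,t)\big) \d x \le \dist_N(a,b)\, \Leb^d\big(\{x \in P : \rho(x) > 2|t|\}\big) \le \dist_N(a,b)\, \Leb^d(P),
\]
which is \eqref{eq:L1estimate}.

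The only genuine point requiring care — the \textbf{main obstacle} — is the Sobolev regularity: one must check that $u \in \dotW^{1,1}(P \times \R, N)$, i.e. that the formal differentiation above is legitimate and there is no extra singular part of $\D u$ coming from the interfaces $\{t = \pm\rho(x)/2\}$ or from the non-smoothness of $\rho$ on the skeleton of $\partial P$. Since $\varphi$ is Lipschitz and constant near $\pm 1/2$, the map $t \mapsto u(x,t)$ is Lipschitz across those interfaces for each fixed $x$, so no jump occurs; and $\rho$ is Lipschitz on all of $\R^d$, so the composition $u = \gamma \circ \varphi \circ (t/\rho(x))$ is Lipschitz on $\{\rho > \delta\}$ for every $\delta > 0$ and one gets $u \in W^{1,1}_{\mathrm{loc}}$ there by the chain rule for Lipschitz functions. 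Near $\partial P$ one argues by the a.e. bound $|\D u| \le C\dist_N(a,b)/\rho(x)$ together with an approximation: replace $\rho$ by $\rho_\delta = \max(\rho, \delta)$, obtaining Lipschitz maps $u_\delta$ that agree with $u$ on $\{\rho > \delta\}$, are constant in $t$ on $\{\rho \le \delta\}$ (so cost no extra energy there beyond a vanishing contribution), and converge to $u$ in $W^{1,1}$ as $\delta \to 0$ by dominated convergence using the uniform bound just derived; the weak differential of the limit is then the a.e.-defined one, and the manifold constraint $u(x,t) \in N$ is preserved since $u$ is valued in the geodesic $\gamma([0,1]) \subset N$. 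This gives $u \in \dotW^{1,1}(P \times \R, N)$ with all the asserted properties.
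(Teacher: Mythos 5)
Your construction is essentially the paper's: both define $u(x,t)$ by composing a constant-speed path from $a$ to $b$ (in the paper, a smooth curve $\gamma$ with $|\gamma'|\le \dist_N(a,b)$; in your case, a minimizing geodesic precomposed with a Lipschitz cutoff) with the rescaled variable $t/\dist(x,\partial P)$, and both derive \eqref{eq:usefulineqinlemmainterpolation} by the same pointwise gradient bound integrated over the cone $\{2|t|\le \dist(x,\partial P)\}$ and \eqref{eq:L1estimate} by the same support observation. The proof is correct (your extra truncation argument via $\rho_\delta=\max(\rho,\delta)$ for the $\mathrm W^{1,1}$ regularity near $\partial P$ is sound, up to the harmless misstatement that $u_\delta$ is constant in $t$ on $\{\rho\le\delta\}$ --- what matters, and what you also say, is that its energy there vanishes as $\delta\to 0$), and it matches the paper's argument.
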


The proof follows the argument for \(\mathrm W^{1, p}\) with \(1 < p < 2\) \cite{jean2025book}.

\begin{Proof}{Lemma}{lemma:interpolation}
Since \(d\) is a geodesic distance, we can choose $\gamma \in C^{\infty}(\mathbb{R}, N)$ to be a mapping such that $\gamma(t) = a$ if $t \leq -1$, $\gamma(t) = b$ if $t\geq 1$ and $|\gamma'(t)| \leq \dist_N(a, b)$ if $-1 \leq t \leq 1$, and $\gamma(t) = b$ if $t \geq 1$. We define $u: P\times \R \to N$ for every $(x,t) \in P\times \mathbb{R}$ by
	\[
	u(t, x) = \gamma \left( \frac{2t}{\dist(x, \partial P)} \right).
	\]
	By definition we have \ref{item:geometryofthemap} and \ref{item:geometryofthemap_II}.
	Then, we record that $u \in \mathrm W^{1,1}_{\text{loc}}(P\times \mathbb{R}, N)$, with for  $(t, x) \in \mathbb{R} \times P$:
	\[
	|\D u(x,t)| \leq  \left|\gamma' \left( \frac{2t}{\dist(x, \partial P)} \right)\right |\left( \frac{|t|}{\dist(x, \partial P)^2} + \frac{1}{\dist(x, \partial P)} \right).
	\]
	Defining the set
	\(
	\Sigma = \left\{ (x,t) \in P \times \R: 2|t| \leq \dist(x, \partial P)\right\},
	\)
	we integrate and estimate
	\begin{equation*}
	\begin{split}
		\int_{P \times \R} |\D u|
		&\leq  \dist_N(a, b) \iint_{\Sigma} \frac{|t|}{\dist(x, \partial P)^2} + \frac{1}{\dist(x, \partial P)}  \d x\d t \\
		&\leq 2 \dist_N(a, b) \iint_{\Sigma} \frac{1}{\dist(x, \partial P)} \d x\d t = 4 \dist_N(a, b) \int_{P} \d x = 4 \prod_{i = 1}^d|\ell_i| \dist_N(a, b).
		\end{split}
	\end{equation*}
	which establish \eqref{eq:usefulineqinlemmainterpolation}.	
	For \eqref{eq:L1estimate}, we assume $t > 0$
	\begin{equation*}
	 \begin{split}
		&\int_{P} \dist_N(a \one_{[-\infty,0)}(t) + b \one_{[0,+\infty)}(t),u(x,t))  \d x\\
		&\qquad \leq \int_{\Sigma(t)}\dist\left(b, \gamma \left( \frac{2t}{\dist(x, \partial P)} \right) \right) \d x \leq \dist_N(a,b) \int_{\Sigma(t)} 1 -\frac{2t}{\dist(x, \partial P)}  \d x \\
		&\qquad \leq \dist_N(a,b) \Leb^d(\Sigma(t)\cap P)
		\leq  \dist_N(a,b) \prod_{i = 1}^d|\ell_i|
	\end{split}
	\end{equation*}
	and similarly for $t \leq 0$.
\end{Proof}

\begin{lemma}\label{lemma:interpolationonesided}
	
	If $P =\prod_{i = 1}^d[0,|\ell_i|] $ and $a,b \in N$, there exists $u \in \dotW^{1,1}(P\times (0,\infty), N)$ satisfying 
	\begin{enumerate}[label=(\roman*)]
		\item 
		If $t  > \dist(x, \partial P)/2$, $u(x,t) = b$.
		\item 
		$\dist_N(a,u(x,t)) \leq  \dist_N(a,b)$ and  $\dist_N(b,u(x,t)) \leq  \dist_N(a,b)$.
			\item $\tr_{P \times \{0\}}u = a$.
	\end{enumerate}
	Moreover, 
	\begin{equation}\label{eq:usefulineqinlemmainterpolationoneside}  	\int_{P \times \R}|\D u|  \leq 8 \dist_N(a,b)\Leb^d(P)
	\end{equation}
	and
	\begin{equation}\label{eq:L1estimateoneside}
		\sup_{ t>0}	\int_P \dist_N(b , u(x,t))\d x \leq 2\dist_N(a,b)\Leb^d(P).
	\end{equation}
\end{lemma}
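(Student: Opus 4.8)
The plan is to repeat the construction in the proof of Lemma~\ref{lemma:interpolation}, shifting the one-dimensional profile so that it takes the value $a$ near the origin instead of transiting symmetrically across it. By the Hopf--Rinow theorem there is a minimizing geodesic joining $a$ to $b$ in $N$; parametrizing it by arclength and precomposing with a smooth monotone transition function on $\R$, I would first produce $\gamma \in \mathrm{C}^\infty(\R, N)$ such that $\gamma \equiv a$ on $(-\infty, 0]$, $\gamma \equiv b$ on $[1, +\infty)$, $\dist_N(a, \gamma(s)) \le \dist_N(a, b)$ and $\dist_N(b, \gamma(s)) \le \dist_N(a, b)$ for every $s \in \R$, and $\lvert \gamma' \rvert \le 2\dist_N(a, b)$ everywhere. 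I would then set, for $(x, t) \in P \times (0, +\infty)$ (extended by $a$ for $t \le 0$, consistently with $\gamma \equiv a$ on $(-\infty,0]$),
\[
  u(x, t) = \gamma\Bigl( \frac{2t}{\dist(x, \partial P)} \Bigr).
\]
This $u$ is locally Lipschitz on $P \times (0, +\infty)$ — on $\operatorname{int}(P) \times (0, +\infty)$ because $\dist(\cdot, \partial P)$ is Lipschitz and bounded below there on compact sets, and near $\partial P \times (0, +\infty)$ because $u$ is locally constant equal to $b$ in that region — and takes values in $N$, so $u \in \dotW^{1, 1}_{\mathrm{loc}}(P \times (0, +\infty), N)$.

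Next I would verify the three enumerated properties. Property (i) holds since $t > \dist(x, \partial P)/2$ forces $2t/\dist(x, \partial P) > 1$, hence $u(x, t) = b$; property (ii) is inherited verbatim from the corresponding property of $\gamma$; and property (iii) holds because $\dist(x, \partial P) > 0$ for a.e.\ $x \in P$, so $u(x, t) = \gamma(2t/\dist(x, \partial P)) \to \gamma(0) = a$ as $t \searrow 0$, the convergence being in $\mathrm{L}^1_{\mathrm{loc}}(P)$ by dominated convergence thanks to (ii).

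For the energy estimate \eqref{eq:usefulineqinlemmainterpolationoneside}, I would compute as in Lemma~\ref{lemma:interpolation}, using $\lvert \nabla \dist(\cdot, \partial P) \rvert = 1$ almost everywhere, that
\[
  \lvert \D u (x, t) \rvert
  \le \Bigl\lvert \gamma'\Bigl( \tfrac{2t}{\dist(x, \partial P)} \Bigr) \Bigr\rvert
  \Bigl( \frac{2t}{\dist(x, \partial P)^2} + \frac{2}{\dist(x, \partial P)} \Bigr).
\]
Since $\gamma'$ vanishes off the region $\Sigma = \{(x, t) \in P \times (0, +\infty) : 2t \le \dist(x, \partial P)\}$, and on $\Sigma$ the right-hand side is at most $6\dist_N(a, b)/\dist(x, \partial P)$, integrating first in $t$ over $(0, \dist(x, \partial P)/2]$ and then in $x$ over $P$ gives
\[
  \int_{P \times (0, +\infty)} \lvert \D u \rvert
  \le 6 \dist_N(a, b) \int_P \frac{1}{\dist(x, \partial P)} \cdot \frac{\dist(x, \partial P)}{2} \d x
  = 3 \dist_N(a, b) \Leb^d(P)
  \le 8 \dist_N(a, b) \Leb^d(P).
\]
Finally, \eqref{eq:L1estimateoneside} is immediate from (ii): for each $t > 0$ one has $\dist_N(b, u(x, t)) \le \dist_N(a, b)$ pointwise, so the integral over $P$ is at most $\dist_N(a, b) \Leb^d(P) \le 2\dist_N(a, b)\Leb^d(P)$.

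The argument is essentially routine; the only point deserving care is the passage from $\mathrm{W}^{1,1}_{\mathrm{loc}}$ to $\dotW^{1, 1}$ up to the lateral boundary $\partial P \times (0, +\infty)$ and near $P \times \{0\}$, which is supplied precisely by the fact that $\D u$ vanishes off $\Sigma$ together with the finiteness of the integral over $\Sigma$ just obtained.
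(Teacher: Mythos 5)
Your construction is exactly the paper's: the authors also take a profile $\gamma \in \mathrm{C}^\infty(\R,N)$ along a geodesic with $\gamma(0)=a$, $\gamma \equiv b$ on $[1,\infty)$ and $\lvert\gamma'\rvert \le 2\dist_N(a,b)$, and then define $u(x,t)=\gamma\bigl(2t/\dist(x,\partial P)\bigr)$ as in Lemma~\ref{lemma:interpolation}; your estimates (giving $3\dist_N(a,b)\Leb^d(P) \le 8\dist_N(a,b)\Leb^d(P)$) and the trace verification are correct. No substantive difference from the paper's (much terser) proof.
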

\begin{Proof}{Lemma}{lemma:interpolationonesided}
	Take $\gamma \in C^\infty(\R,\mathcal N)$ to be a mapping such that $\gamma(0) = a$, $|\gamma'(t)| \leq 2 \dist_N(b,a)$ and $\gamma(t) = b, t\geq 1$. We proceed as in the proof of Lemma \ref{lemma:interpolation}.
\end{Proof}

\begin{Proof}{Proposition}{prop:w11case}
	By Proposition \ref{prop:smalllemma_varconnect2map}, there exists a map $U_{\ast}: \R^d \times (-L,L) \to N$ constant on a countable family of rectangular cuboïds. 
	
	By Lemma \ref{lemma:interpolation} applied to each  faces \emph{i.e.} element of
	\begin{multline*}
		\bigl\{ \partial{Q \times I_{i,n}} \cap \partial{Q \times I_{i',n'}}: \\ i,i' \in \set{0,1}, n,n' \in \mathbb N, Q \in \mathcal Q_{k_{n}}, Q' \in \mathcal Q_{k_{n'}} \text{ s.t }  \partial{Q \times I_{n, i}} \neq \partial{Q \times I_{n', i'}}\bigr\}
	\end{multline*}
	 we obtain a map $U : \R^d \to N$. First, by Lemma \ref{lemma:interpolation} \ref{item:geometryofthemap} this is well defined : the support of the modifications is disjoint. By Lemma \ref{lemma:interpolation}\eqref{eq:usefulineqinlemmainterpolation}  we get
	 \[
	 	\int_{\R^d \times (-L,L)}|\D U| \d \Leb^{d + 1} \leq 4 	\int_{\mathrm{J}_{U_\ast} \cap \R^d\times (-L,L)} \dist_N(\Tr_{\mathrm{J}_{U_\ast}}^+{U_\ast},\Tr_{\mathrm{J}_{U_\ast}}^-{U_\ast})\d \mathcal H^{d}.
	 \]
	By the choice of $U_*$ and in particular Proposition \ref{prop:w11case}\eqref{eq:ineq_smallllemma_varconnect2map} we get the estimate \eqref{eq:w11case_ineq}.
	
	The estimate \eqref{eq:L1estimate} will only be used to the faces perpendicular $\mathcal F^{\perp}$ to $\R^d\times\{0\}$.  
	If for $i\in \set{0,1}, n\in\mathbb N$, $I_{i,n} = (a,b)$, we set $t_{i,n} = (a + b)/2$. The choice of $t \in \R$ ensures that the plane $\R^d\times\{t\}$ does not intersect the support of the modification of the faces parallel to $\R^d \times \set{0}$. The choice of $t$ allows us to ensure that $\R^d \times \{t\}$ only crosses points in $\{x \in \R^{d} \times (-L,L) \,:\, U(x) \neq \tilde U(x)\}$ arising from modification of the perpendicular faces by Lemma \ref{lemma:interpolation}. The estimate \eqref{eq:L1estimate} implies
	\begin{multline}\label{eq:thatchoiceoft}
		\int_{\{x \in \R^{d}  \,:\, U(x) \neq \tilde U(x)\}}\dist_N(\tr_{\R^d\times \set{t_{i,n}}}U(x), \tr_{\R^d\times \set{t_{i,n}}}U_*(x)) \d x \\\leq \int_{\mathrm{J}_{U_\ast}}\dist_N(\tr^+_{\mathrm{J}}{U_\ast},\tr^-_{\mathrm{J}}{U_\ast})\d \Hau^d.
	\end{multline}
	By the triangle inequality, \eqref{eq:thatchoiceoft} and Proposition \ref{prop:smalllemma_varconnect2map}\eqref{eq:tracelimit}, for all $n \in \mathbb N$, $i \in \set{0,1}$,
	\begin{equation}
		\int_{\R^d}\dist_{N}(u_i(x),\tr_{\R^d\times \set{t_{i,n}}}U(x))\d x \leq 2^{-n}C(d,u_0,u_1,L)
	\end{equation}
	where $C(d,u_0,u_1,L)>0$ only depends on $d,u_0,u_1$ and $L$. By continuity of the trace under translations, since $t_{i,n} \to (-1)^{i+1}L$ when $n \to + \infty$, we conclude that $\tr_{\R^d \times \set{-L}}U = u_0$ and
	$\tr_{\R^d \times \set{L}}U = u_1$.
\end{Proof}

The proof of Proposition~\ref{prop:oncubes} is similar to Proposition \ref{prop:w11case} and uses the Lemma \ref{lemma:interpolationonesided} to handle the values on the map on the boundary of the cube.

\begin{Proof}{Proposition}{prop:oncubes}  We assume that $u$ is not constant on the faces $\partial Q \setminus (\R^d\times \{-1,1\})$. We define $v_0 \in \mathrm{L^1}(\R^{d},N)$ by $v_0(x) = u (x)$, $x \in \partial Q \setminus (\R^d\times \{-1\})$ and $v_0(x) = p$ elsewhere. We define $v_1 \in \mathrm{L^1}(\R^{d},N)$ by $v_1(x) = u (x)$, $x \in \partial Q \setminus (\R^d\times \{1\})$ and $v_1(x) = p$ elsewhere. By Proposition \ref{prop:smalllemma_varconnect2map} there exists $V \in \dot{\mathrm{BV}}(\R^d \times (-1,1),N)$ such that $\Tr_{\R^d \times \{-1\}}V = v_0$ and $ \tr_{\R^d \times \{1\}} = v_1$ satisfying $V |_{\R^d\times (-1,1) \setminus Q} = p$ and by \eqref{eq:ineq_smallllemma_varconnect2map} and the triangle inequality
	\[
	\int_{\R^d \times (-1,1)} \dist_N(\tr_{\mathrm J_V}^+V, \tr_{\mathrm J_V}^-V)\d \mathcal H^{d}\leq C_d \int_{\partial Q} \dist_N(u(x),\pointN) \d x
	\]
	Considering $V|_{Q}$, we then argue as in the proof of Proposition \ref{prop:w11case} using Lemma \ref{lemma:interpolationonesided} instead of Lemma \ref{lemma:interpolation} for faces of cuboïds adjacent to $\partial Q \setminus (\R^{d}\times \{-1,+1\})$. 
\end{Proof}

\section{Extension to a halfspace and to a manifold}
\label{sec:extension-from-manifold-domains}

Based on Proposition \ref{prop:oncubes}, we prove the extension part of Theorem \ref{thm:yetanothertraceineqRd}.

\begin{proposition}\label{prop:w11casehalfspace}
If $u : \R^d \to N$ is measurable and the right-hand side of \eqref{eq:propextthmccl} is finite,
 	then there exists $U \in \dotW^{1,1}(\R^d \times (0,\infty),N)$ such that $\tr_{\R^d \times \set{0}}U = u$ and
 	\begin{equation}\label{eq:propextthmccl}
 		\int_{\R^d \times (0,\infty)} |\D U|
 		\leq C_d\liminf_{R \to +\infty}
 		\iint\limits_{\substack{\R^d \times \R^d \\ |x - y| \leq R}} \frac{\dist_{N}(u(x), u(y))}{\mathcal L^d(\B^d(0,R))} \d x \d y,
 	\end{equation}
	where $C_d >0$ only depends on $d$.
\end{proposition}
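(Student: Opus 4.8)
The plan is to reduce the half-space extension to the strip extension of Proposition~\ref{prop:w11case} by connecting $u$ to a distinguished constant value and then filling the remaining part of the half-space by that same constant. Write $\Theta(R) = \iint_{\{|x-y|\le R\}}\dist_N(u(x),u(y))/\Leb^d(\B^d(0,R))\,\d x\,\d y$, so that the right-hand side of \eqref{eq:propextthmccl} equals $C_d\liminf_{R\to\infty}\Theta(R)$, which we assume finite. By Proposition~\ref{prop:ballBBMII}, the limit $\lim_{R\to\infty}\Theta(R)$ then exists and there is $b_\ast\in N$ with $\int_{\R^d}\dist_N(u(x),b_\ast)\,\d x = \tfrac12\lim_{R\to\infty}\Theta(R) = \tfrac12\liminf_{R\to\infty}\Theta(R) < \infty$; in particular $u$ is integrable.

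Next I apply Proposition~\ref{prop:w11case} with $u_0 = u$, with $u_1$ the constant map equal to $b_\ast$, and with $L = 1$. Hypothesis \eqref{eq:intecondition} holds by the previous paragraph, and \eqref{eq:thelimitassumption} holds for $u_1$ trivially (the integrand vanishes) and for $u_0 = u$ because the triangle inequality \eqref{eq:supBBMII} gives $\iint_{\{|x-y|\le r\}}\dist_N(u(x),u(y))/r^d\,\d x\,\d y \le 2\Leb^d(\B^d(0,1))\int_{\R^d}\dist_N(u(x),b_\ast)\,\d x$ for every $r>0$. Proposition~\ref{prop:w11case} thus produces $V\in\dotW^{1,1}(\R^d\times(-1,1),N)$ with $\tr_{\R^d\times\{-1\}}V = u$ and $\tr_{\R^d\times\{1\}}V = b_\ast$. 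In \eqref{eq:w11case_ineq} the self-interaction term of $u_1$ vanishes while that of $u_0$ is $C_d\Leb^d(\B^d(0,1))\Theta(1)$, again controlled by $\int_{\R^d}\dist_N(u(x),b_\ast)\,\d x$ through \eqref{eq:supBBMII}, whence
\[
 \int_{\R^d\times(-1,1)}|\D V| \le C_d'\int_{\R^d}\dist_N(u(x),b_\ast)\,\d x = \frac{C_d'}{2}\liminf_{R\to\infty}\Theta(R).
\]

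Finally I fill the rest of the half-space by the constant: define $U\colon\R^d\times(0,\infty)\to N$ by $U(x,t) = V(x,t-1)$ for $0 < t < 2$ and $U(x,t) = b_\ast$ for $t\ge 2$. Since the one-sided traces of $U$ on $\R^d\times\{2\}$ both equal $b_\ast$, the map $U$ is weakly differentiable on $\R^d\times(0,\infty)$, with $\D U(x,t) = \D V(x,t-1)$ for $t<2$ and $\D U = 0$ for $t>2$; hence $U\in\dotW^{1,1}(\R^d\times(0,\infty),N)$ with $\int_{\R^d\times(0,\infty)}|\D U| = \int_{\R^d\times(-1,1)}|\D V|$, which yields \eqref{eq:propextthmccl}, and $\tr_{\R^d\times\{0\}}U = \tr_{\R^d\times\{-1\}}V = u$.

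The argument is short precisely because the dyadic construction is already packaged in Propositions~\ref{prop:w11case} and~\ref{prop:ballBBMII}; the only points requiring care are the identification of the distinguished constant $b_\ast$ together with its sharp $\mathrm L^1$-bound in terms of $\liminf_{R\to\infty}\Theta(R)$ (Proposition~\ref{prop:ballBBMII}), the control of the self-interaction term of $u$ in \eqref{eq:w11case_ineq} by $\int_{\R^d}\dist_N(u,b_\ast)$ via the triangle inequality \eqref{eq:supBBMII}, and the verification that gluing by a constant across $\R^d\times\{2\}$ is a legitimate $\dotW^{1,1}$ operation that preserves the trace on $\R^d\times\{0\}$. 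A more hands-on route would tile $\R^d\times(0,\infty)$ by a tower of rectangular cuboïds in the spirit of Proposition~\ref{prop:smalllemma_varconnect2map} --- refining towards $t=0$ and homogenizing to $b_\ast$ as $t\to\infty$ --- and fill each cube through Proposition~\ref{prop:oncubes}; but reconciling the scales across consecutive slabs is exactly what Proposition~\ref{prop:w11case} already handles, so the reduction above is the more economical one.
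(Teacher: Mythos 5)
Your argument is correct and is essentially the paper's own proof: the paper likewise takes $b$ from Proposition~\ref{prop:ballBBMII}, sets $u_0=u$, $u_1=b$, and invokes the two-trace extension result (the paper's citation of Proposition~\ref{prop:oncubes} there appears to be a slip for Proposition~\ref{prop:w11case}). You merely spell out what the paper leaves implicit --- verification of \eqref{eq:thelimitassumption}--\eqref{eq:intecondition} via \eqref{eq:supBBMII}, the constant filling of $\R^d\times[2,\infty)$, and the gluing across $\R^d\times\{2\}$ --- all of which is sound.
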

\begin{proof}
Since \(u\) is integrable by Proposition \ref{prop:ballBBMII}, there exists \(b\) such that
\[
 \int_{\R^d} \dist_N (u (x), b) \d x \le C_d
 \liminf_{R \to \infty} \iint\limits_{\substack{\R^d \times \R^d \\ |x - y| \leq R}} \frac{\dist_{N}(u(x), u(y))}{\mathcal L^d(\B^d(0,R))} \d x \d y < \infty,
\]
We set \(u_0 = u\) and \(u_1 = b\) and we apply Proposition~\ref{prop:oncubes}.
\end{proof}

We finally construct the extension in Theorem~\ref{thm:W11extmanifold}.
\begin{proposition}\label{prop:W11extmanifold}
	Let $M$ be a  Riemannian manifold  with compact boundary $\partial M$. If $u : \partial M \to N$ is integrable, there exists $U \in \dotW^{1,1}(M,N)$ satisfying $\tr_{\partial M}U = u$ and
	\[
	\int_{M} |\D U|\leq C_M \iint_{\partial M \times \partial M}	\dist_N(u(x), u (y)) \d x
	\]
	where $C_M>0$ only depends on $M$.
\end{proposition}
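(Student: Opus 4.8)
The plan is to reduce the construction to Proposition~\ref{prop:oncubes} by decomposing a collar neighbourhood of $\partial M$ into finitely many cuboids, on each of which $u$ is prescribed on a single face and forced to equal a fixed constant $b \in N$ on all the other faces.

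First I would fix the geometry. Since $\partial M$ is a compact smooth hypersurface, there are $\delta > 0$ and a diffeomorphism $\Psi \colon \partial M \times [0,\delta] \to M$ onto a closed collar neighbourhood of $\partial M$ with $\Psi(\cdot, 0) = \mathrm{id}_{\partial M}$, and, after shrinking $\delta$, one may take $\Psi$ bi-Lipschitz with constants depending only on $M$ (equipping $\partial M \times [0,\delta]$ with a fixed product metric). Next I would fix a finite smooth triangulation of $\partial M$ and pass to a cubical subdivision, obtaining finitely many closed cells $F_1,\dots,F_m \subset \partial M$ with pairwise disjoint interiors, with $\bigcup_j F_j = \partial M$, meeting face to face, each bi-Lipschitz equivalent to the $d$-dimensional cube $Q(0,1)$ with $d = \dim \partial M$. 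The cuboids $Q_j = \Psi(F_j \times [0,\delta])$ then have pairwise disjoint interiors, their union is the collar, each is bi-Lipschitz equivalent to the cube $Q(0,1) \subset \R^{d+1}$ of Proposition~\ref{prop:oncubes} with constants depending only on $M$, each $Q_j$ carries the face $G_j = F_j \times \{0\} \subset \partial M$, and two of the $Q_j$ meet, if at all, along a common face.

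Since $u$ is integrable I would fix $b \in N$ with $\int_{\partial M}\dist_N(u(x),b)\,\d x < \infty$; when the right-hand side of the asserted inequality is finite, Fubini's theorem allows one to take $b = u(y_0)$ for a suitable $y_0 \in \partial M$ so that, moreover, $\int_{\partial M}\dist_N(u(x),b)\,\d x \le \Leb^d(\partial M)^{-1}\iint_{\partial M \times \partial M}\dist_N(u(x),u(y))\,\d x\,\d y$. On each $Q_j$ I would then transport the problem to $Q(0,1)$ through the bi-Lipschitz map of the previous step and apply Proposition~\ref{prop:oncubes} with boundary datum the transported $u$ on the face corresponding to $G_j$ and the constant $b$ on the $2d+1$ remaining faces --- legitimate since the datum is then $b$ on all faces but one, a fortiori on all faces but two opposite ones. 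Pulling back yields $U_j \in \dotW^{1,1}(Q_j,N)$ with $\tr_{G_j}U_j = u|_{F_j}$, trace $b$ on every other face of $Q_j$, and $\int_{Q_j}|\D U_j| \le C_M \int_{F_j}\dist_N(u(x),b)\,\d x$, the bi-Lipschitz change of variables only affecting the constant. Setting $U = U_j$ on $Q_j$ and $U = b$ on $M$ minus the collar, I observe that on every interface between two cuboids, and on the interface between the collar and its complement, both one-sided traces equal the constant $b$, so no jump is created and $U \in \dotW^{1,1}(M,N)$. Since $U$ is constant off the collar, $\int_M |\D U| = \sum_j \int_{Q_j}|\D U_j| \le C_M \int_{\partial M}\dist_N(u(x),b)\,\d x$, which by the choice of $b$ is at most $C_M \iint_{\partial M \times \partial M}\dist_N(u(x),u(y))\,\d x\,\d y$ (the estimate being vacuous when the latter integral is infinite); and $\tr_{\partial M}U = u$ because $\tr_{G_j}U = u|_{F_j}$ for all $j$ and the cells $F_j$ cover $\partial M$.

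The step I expect to be the main obstacle is the cube decomposition of the collar: one must arrange the cells $F_j$, hence the cuboids $Q_j$, to be \emph{uniformly} bi-Lipschitz to standard cubes and to meet face to face, so that the locally defined maps $U_j$ glue to a genuine element of $\dotW^{1,1}(M,N)$ and all geometric distortions are absorbed into $C_M$; this is where the compactness of $\partial M$ and the existence of a smooth triangulation are used. All the analytic content --- the per-cuboid extension with the correct estimate --- is already contained in Proposition~\ref{prop:oncubes}.
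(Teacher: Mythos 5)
Your proposal is correct and follows essentially the same route as the paper: decompose a bi-Lipschitz collar of $\partial M$ into cuboids over the cells of a (cubical refinement of a) triangulation of $\partial M$, choose the basepoint $b$ by a Fubini/averaging argument so that $\int_{\partial M}\dist_N(u,b)$ is controlled by the double integral, apply Proposition~\ref{prop:oncubes} on each cuboid with datum $u$ on the boundary face and $b$ on the remaining faces, and glue, extending by the constant $b$ off the collar. The only cosmetic difference is that the paper works with a simplicial complex whose cells are bi-Lipschitz to cubes rather than an explicit cubical subdivision, which changes nothing in the estimates.
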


The integration on $M$ and on $\partial M$ are performed with respect to the measure induced by their Riemannian metric or, equivalently, by the Hausdorff measure, and can be computed in local charts thanks to the introduction of the appropriate Jacobian.

\begin{Proof}{Proposition}{prop:W11extmanifold} Let $m$ denote the dimension of the manifold $M$. By smoothness and compactness of the boundary $\partial M$, we realize $\partial M$ as a simplicial complex \cite[Theorem 7]{whitehead1940onC1} $S_{\partial M} = \bigcup_{n = 0}^{m - 1} S_n$ of dimension $m - 1$ with $S_n$ containing the cells of dimension $n$; a cell $C \in S_{n}$ is bi-Lipschitz homeomorphic to a cube $Q(0,1) \subset \R^n$.
	Let $V = \{x \in M \cup \partial M: \dist_M(x,\partial M) \leq \rho\}$ for $\rho > 0$ sufficiently small so that there exists a diffeomorphism $\Psi:V \to \partial M \times [0,1]$. We realize $V$ as a  finite simplicial complex $S_M$ of dimension $m$, such that $S_{\partial M}$ is the boundary of $S_M$ meaning that for a $k$-dimension cell $C$ in $S_M$, $C \cap \partial M$ is a $(k-1)$-dimensional cell in $S_{k - 1}$ ; this can be done by taking $C \in S_{\partial M}$ and defining $C' = C \times [0,1] \subset \partial M \times [0,1]$ and  define the cell in $V$ by $\Psi^{-1}(C')$.	
	Fix $\pointN \in N$ so that
	\[
	\int_{\partial M} \dist_N(u(x),b) \d x
		\le \frac{1}{
		\Hau^d
		(\partial M)}\iint_{\partial M \times \partial M} \dist_N(u(x), u(y)) \d x \d y
	\]
	If $C \in S_{m - 1}$, on $\partial C' \setminus C \times \set{0}$ we extend the domain of definition of $u$ by setting $u \circ \Psi^{-1} = p$. Therefore $C'$ is a cell such that $\tr_{\partial C'}u \in \mathrm L^1(\partial C', N)$ and if two cells $C'$ are adjacent they share the same constant on their boundary. Using the bi-Lipschitz equivalence of cells with cubes, the situation reduces to extend a map $u \in \mathrm L^1(\partial Q,N)$, $Q = Q(0,1) \subset \R^m$ equals to $p$ on each face exept one to a map $U \in \dotW^{1,1}(Q,N)$ of trace $\tr_{\partial Q}U = u$, which is done by Proposition \ref{prop:oncubes} applied to each cube.
	We set $U = p$ on $M \setminus V$. Since the same trace is shared on the faces of the $m$-dimensional cells in $V$, we obtain, by integration by part (\textit{cfr} \cite[Theorem 18.1(ii)]{leoni2017sobolev}\cite{jean2025book}), a map $U \in \dotW^{1,1}(M,N)$ such that $\tr_{\partial M} U = u$ such that
	\begin{equation*}
	\begin{split}
		\int_{M}|\D U|= \int_{V} |\D U| &= \sum_{\substack{C' = C \times [0,1] \\ C \in S_{m-1}}}\int_{C'}|\D U|
		\\
		&\leq C_M \sum_{\substack{C' = C \times [0,1] \\ C \in S_{m-1}}}\int_{\partial C'}\dist_N(u(x),\pointN) \\
		&\leq C_M \sum_{C \in S_{m-1}}\int_{C}\dist_N(u(x),\pointN) = C_M \int_{\partial M}\dist_N(u(x),\pointN)
	\end{split}
	\end{equation*}
	where $C_M>0$ depends on the  Lipschitz constant of $\Psi$ and absorbs the constant arising from Proposition \ref{prop:oncubes},  which concludes the proof.
\end{Proof}


\providecommand{\bysame}{\leavevmode\hbox to3em{\hrulefill}\thinspace}
\providecommand{\MR}{\relax\ifhmode\unskip\space\fi MR }
\providecommand{\MRhref}[2]{%
  \href{http://www.ams.org/mathscinet-getitem?mr=#1}{#2}
}
\providecommand{\href}[2]{#2}

\end{document}